\newcommand{\ndg}[1]{| \kern -.25mm \|{#1}| \kern -.25mm \|}
\newcommand{\nsdg}[1]{| \kern -.25mm \|{#1}| \kern -.25mm \|_s}
\newcommand{\ltwo}[2]{\|{#1}\|_{#2}}
\newcommand{\el}{ T \in \mathcal{T} }
\newcommand{\ud}{\mathrm{d}}
\newcommand{\av}[1]{\{#1\}}
\newcommand{\norm}[2]{\|#1\|_{#2}}
\newcommand{\snorm}[2]{|#1|_{#2}}
\newcommand{\dint}{\text{\rm int}}
\newcommand{\mbf}[1]{\mbox{\boldmath$\rm{#1}$}}
\newcommand{\jump}[1]{ [#1] }
\newcommand{\rR}{\ensuremath{\mathbb R}\xspace}
\newcommand{\reals}{\rR}
\newcommand{\ddd}{{\rm D}}
\newcommand{\dn}{{\rm N}}
\newcommand{\db}{{\rm b}}
\DeclareMathOperator{\diam}{diam}
\renewcommand{\tilde}[1]{\widetilde{#1}}
\renewcommand{\hat}[1]{\widehat{#1}}
\newcommand*{\rom}[1]{\text{\expandafter\@slowromancap\romannumeral #1@}}
\newtheorem{corollary}{Corollary}[section]
\newtheorem{lemma}[corollary]{Lemma}
\newtheorem{theorem}[corollary]{Theorem}
\newtheorem{proposition}[corollary]{Proposition}
\newtheorem{remark}[corollary]{Remark}
\newtheorem{assumption}[corollary]{Assumption}
\newcommand{\qed}{ \vspace{-0.5cm} \hfill $\Box$ }
\newenvironment{proof}[1][Proof.]{\begin{trivlist}
\item[\hskip \labelsep {\bfseries #1}]}{\end{trivlist}\qed}
\begin{document}
\title{Recovered finite element methods \\ on polygonal and polyhedral meshes}
\author{
Zhaonan Dong\thanks{
Department of Mathematics,
University of Leicester,
University Road,
Leicester LE1 7RH,
UK
{\tt{zd14@le.ac.uk}}.
}
\and Emmanuil H.~Georgoulis\thanks{
Department of Mathematics,
University of Leicester,
University Road,
Leicester LE1 7RH,
UK and School of Applied Mathematical and Physical Sciences, National Technical University of Athens, Zografou 15780, Greece
{\tt{Emmanuil.Georgoulis@le.ac.uk}}.
}\and Tristan Pryer\thanks{
Department of Mathematics and Statistics,
University of Reading,
Whiteknights,
PO Box 220,
Reading RG6 6AX,
UK
{\tt{T.Pryer@reading.ac.uk}}.
}}
\date{\today}

\maketitle

\begin{abstract}
\noindent
\emph{Recovered finite element methods (R-FEM)} has been recently introduced in \cite{RFEM} for meshes consisting of simplicial and/or box-type meshes. Here, utilising the flexibility of R-FEM framework, we extend their definition on polygonal and polyhedral meshes in two and three spatial dimensions, respectively. A key attractive feature of this framework is its ability to produce \emph{conforming} discretizations, yet involving \emph{only} as many degrees of freedom as discontinuous Galerkin methods over general polygonal/polyhedral meshes with potentially many faces per element. A priori error bounds are shown for general linear, possibly degenerate, second order advection-diffusion-reaction boundary value problems. A series of numerical experiments highlights the good practical performance of the proposed numerical framework. 
\end{abstract}

\section{Introduction}

Recently, there has been a considerable interest in the construction of Galerkin-type numerical methods over meshes consisting of general polygons in two dimensions or general polyhedra in three dimensions, henceforth termed collectively as \emph{polytopic}, as opposed to the classical Galerkin methods employing simplicial and/or box-type meshes. This interest is predominantly motivated by the potential reduction in the total numerical degrees of freedom required for the numerical solution of PDE problems. This is particularly pertinent in the context of adaptive computations for evolution PDE problems, where dynamic mesh modification is widely accepted as a promising tool for the reduction of computational complexity in both Eulerian and Lagrangian contexts. Galerkin procedures over polytopic meshes have been also proposed in the context of interface problems (porosity profiles, interfaces, etc.), as well as in the context of coarse correction computations in multilevel solvers for elliptic boundary-value problems.

Popular polytopic methods include the virtual element method \cite{VEM6,UnifiedVEM}, which is itself an evolution of the so-called mimetic finite difference methods \cite{MimeticBook2014}, polygonal finite element methods \cite{Tabarraei:Sukumar:2004}, {composite finite element methods \cite{hackbusch_sauter_cfe_nm,MR2439507} }and various discontinuous Galerkin (dG) approaches, ranging from one-field interior penalty dG methods \cite{DiPietroErn,DGpoly1,DGpoly2,DGpolybook}, to hybridized formulations \cite{HDG,HHOandHDG}. DG methods are attractive as one can control the number the global numerical degrees of freedom independently of the mesh topology (i.e., the connectivity of the nodes/faces/elements), whereas polygonal finite elements and virtual element methods are attractive as they involve conforming approximation spaces. To construct such conforming spaces, polygonal finite element and/or virtual element methods involve basis functions which are dependent on the mesh topology: roughly speaking, even the lowest order spaces require as many basis functions as the number of mesh nodes, thereby hindering a potential substantial complexity reduction by the use of polytopic elements with ``many'' faces. 

In this work, motivated by the recent recovered finite element framework presented in \cite{RFEM}, we construct \emph{conforming} methods over polytopic meshes whose set of degrees of freedom is \emph{independent} of the number of vertices/edges/faces of each element. The proposed method depends, instead, on the choice of a sub-triangulation of the polytopic meshes. Crucially, however, the computational complexity of the method is \emph{independent} of the cardinality of the simplices in the sub-triangulation. More specifically, the recovered finite element method (R-FEM) on polytopic meshes combines completely discontinuous local \emph{polynomial} spaces, resulting, nonetheless, to conforming approximations. 

To fix ideas, let us consider an elliptic boundary value problem with homogeneous Dirichlet boundary conditions. Let $\mathcal{E}:V_h\to \tilde{V}_h\cap H^1_0(\Omega)$ an operator mapping a \emph{discontinuous} element-wise polynomial space $V_h$ over a polytopic mesh onto a space of \emph{continuous} piecewise polynomial space $\tilde{V}_h\cap H^1_0(\Omega)$ over a, generally speaking, finer simplicial mesh arising from a sub-triangulation of the polytopic mesh; such \emph{recovery} operators $\mathcal{E}$ can be constructed locally, e.g., by (weighted) averaging of the nodal degrees of freedom \cite{KP,MR0400739}. We can now consider the method: find $u_h\in V_h$, such that
\[
\int_{\Omega}\nabla \mathcal{E}(u_h)\cdot \nabla \mathcal{E}(v_h)\ud x + s(u_h,v_h) = \int_{\Omega}f\mathcal{E}(v_h)\ud x, \quad\text{ for all } v_h\in V_h,
\]
for $f\in H^{-1}(\Omega)$ and a suitable \emph{stabilization} $s(\cdot,\cdot):V_h\times V_h\to\mathbb{R}$, whose functionality is the treatment of the kernel $\{0\neq v_h\in V_h: \mathcal{E}(v_h)=0\}$ to achieve unisolvence. Crucially, despite using element-wise discontinuous polynomial trial and test space $V_h$, the method also produces simultaneously a conforming approximation $\mathcal{E}(u_h)$. We note that the above method yields, in general, different numerical solutions to those one would get by postprocessing standard dG approximations on polytopic meshes via the recovery operator $\mathcal{E}$. In the limit case of the above R-FEM posed on a simplicial mesh (rather than a general polytopic one), $\mathcal{E}(u_h)$ corresponds to the classical conforming FEM approximations for certain choices of $\mathcal{E}$ \cite{RFEM}. Therefore, in this sense, R-FEM is an extension of classical finite element methods to polytopic meshes. An interesting property of the proposed method is that the user has access to the computed approximate solution at every point in the computational domain. This may be of practical interest both in the context of further post-processing and in the visualisation of the computation on standard widely available software.

The above example of an elliptic problem is intended to highlight some of the attractive features for R-FEM: a) conformity is not hard-wired in the approximation spaces and b) there is considerable flexibility in the particular choice of the recovery operator $\mathcal{E}$, the finite element space $\mathcal{E}(V_h)$, and the stabilisation $s$ used. Moreover, since $\{0\neq v_h\in V_h: \mathcal{E}(v_h)=0\}$ is allowed to be \emph{non-trivial} by construction, R-FEM offers significant flexibility in combining various types of numerical degrees of freedom (nodal, modal, moments, etc.) for different elements in the same mesh. This may be of interest in the treatment of interface problems.

{The remainder of this work is structured as follows. In Section \ref{model}, we introduce the problem and define a set of polytopic meshes. In Section \ref{sec:rfem_FES}, we introduce the FEM spaces and the recovery operators. Section \ref{concept} presents the concepts and ideas for designing R-FEM. In Section \ref{sec:alternative}, we define the R-FEM for the model problem. The a priori error analysis for R-FEM is presented in Section \ref{sec:apriori}. Finally, the practical performance of the proposed R-FEM is tested through a series of numerical examples in Section \ref{numerics}.}

\section{Model problem}\label{model}

Throughout this work we denote the standard Lebesgue spaces by
$L^p(\omega)$, $1\le p\le \infty$, $\omega\subset\mathbb{R}^d$,
$d=2,3$, with corresponding norms $\|\cdot\|_{L^p(\omega)}$; the norm
of $L^2(\omega)$ will be denoted by $\ltwo{\cdot}{\omega}$ for
brevity. Let also $W^{s,p}(\omega)$ and
$H^s(\omega):=W^{s,2}(\Omega)$, be the Banach and Hilbertian Sobolev
space of index $s\in\mathbb{R}$ of real-valued functions defined on
$\omega\subset\mathbb{R}^d$, respectively, constructed via standard
interpolation and/or duality procedures for $s\notin
\mathbb{N}_{0}$. For $H^s(\omega)$, we denote the corresponding norm
and seminorm by $\norm{\cdot}{s,\omega}$ and
$\snorm{\cdot}{s,\omega}$, respectively. We also denote by
$H^1_0(\omega)$ the space of functions in $H^1(\omega)$ with vanishing
trace on $\partial \omega$.

For $\Omega$ a bounded open polygonal domain in $\mathbb{R}^d$, $d\in
\mathbb{N}$, with $\partial\Omega$ denoting its boundary, we consider
advection-diffusion-reaction problem
\begin{equation}\label{pde}
-\nabla\cdot a\nabla u + \mbf{b}\cdot \nabla u + cu=f\quad\text{in } \Omega,
\end{equation}
where $f\in L^2(\Omega)$, $\mbf{b}\in [W^{1,\infty}(\Omega)]^d$, $c\in
L^\infty(\Omega)$, for some definite diffusion tensor $a\in
[L^{\infty}(\Omega)]^{d\times d}$ satisfying
\begin{equation}\label{nonneg}
\zeta^T a(x)\zeta \ge 0, \quad \text{for all } \zeta\in \mathbb{R}^d,
\end{equation}
for almost every $x\in\bar{\Omega}$. This class of problem is often
termed \emph{PDEs with non-negative characteristic form} \cite{OR} and
includes elliptic, parabolic, first order hyperbolic as well as other
non-standard types of PDEs, such as ultra-parabolic and various
classes of linear degenerate equations. In particular, the important
family of linear Kolmogorov-Fokker-Planck equations are of the form
\eqref{pde}.

To prescribe suitable boundary conditions, we begin by splitting
$\partial\Omega$ into
\[
\Gamma_0:= \{x\in\partial\Omega: \mbf{n}^T(x) a(x)\mbf{n}(x) > 0\},
\]
and 
\[
\Gamma_1:= \{x\in\partial\Omega: \mbf{n}^T(x) a(x)\mbf{n}(x) = 0\},
\]
with $\mbf{n}(x)$ denoting the unit outward normal vector to $\Omega$
at $x\in\partial\Omega$; the latter is further subdivided into inflow
\[
\Gamma_- :=\{x\in \Gamma_1: \mbf{b}(x)\cdot \mbf{n}(x)<0\},
\]
and outflow $\Gamma_+:= \Gamma_0 \backslash \Gamma_-$ parts of the
boundary. The ``elliptic'' part of the boundary $\Gamma_0$, is
subdivided into $\Gamma_{\ddd}$ and $\Gamma_{\dn}$, on which we can
prescribe Dirichlet and and Neumann boundary conditions,
respectively. For simplicity, we assume that $|\Gamma_{\ddd}|>0$, with
$|\cdot |$ denoting the Hausdorff measure with the dimension of its
argument, and that $\mbf{b}(x)\cdot \mbf{n}(x)\ge 0$ for almost all
$x\in\Gamma_{\dn}$. To complete the problem, we impose the boundary
conditions:
\begin{equation}\label{bcs}
\begin{aligned}
u =&\ g_{\ddd}^{},\qquad \text{on } \Gamma_{\ddd}\cup\Gamma_-,\\
a\nabla u\cdot \mbf{n} = &\ g_{\dn}^{}, \qquad \text{on } \Gamma_{\dn},
\end{aligned}
\end{equation}
for some known $g_{\ddd}^{}\in L^2(\partial\Omega)$ and $g_{\dn}^{}\in H^{1/2}(\partial\Omega)$. For convenience, we also define the set
\[
\Gamma_{\ddd}^-:=\{ x\in\partial\Omega: \mbf{b}(x)\cdot \mbf{n}(x)<0\},
\]
i.e., the inflow part of the boundary including also, possibly, points
of $\Gamma_{\ddd}$. {Similarly, we define $\Gamma_{\dn}^+ = \partial
\Omega \backslash \Gamma_{\ddd}^-$}. Additionally, we assume that
the following positivity hypothesis holds: there exists a positive
constant $\gamma_{0}$ such that
\begin{equation}\label{assumption-cb}
c_0(x):=c(x)-\frac{1}{2}\nabla\cdot \bold{b}(x)\geq \gamma_0 \quad \text{ a.e. } x\in\Omega.
\end{equation} 
The well-posedness of the boundary value problem \eqref{pde}, \eqref{bcs} has been studied in \cite{HSS_hyp}.

\section{Finite element spaces and recovery operators}
\label{sec:rfem_FES}
Let $\mathcal{T}$ be a subdivision of $\Omega$ into disjoint polygonal
elements for $d=2$ or to disjoint polyhedral elements for $d=3$;
henceforth, these will be collectively referred to as \emph{polytopic
elements}. For simplicity, we assume that the subdivision
$\mathcal{T}$ can be further subdivided into a regular (i.e., no
hanging nodes) and shape-regular simplicial triangulation
$\tilde{\mathcal{T}}$ (see, e.g., p.124 in \cite{ciarlet}), that
$\bar{\Omega}=\cup_{\el}\bar{T}$. Such a setting can be constructed,
e.g., by agglomerating simplicial elements into polytopic ones.

\begin{figure}
\centering
\subcaptionbox{A polygonal mesh $\mathcal{T}_1$.}{
\includegraphics[scale=0.35]{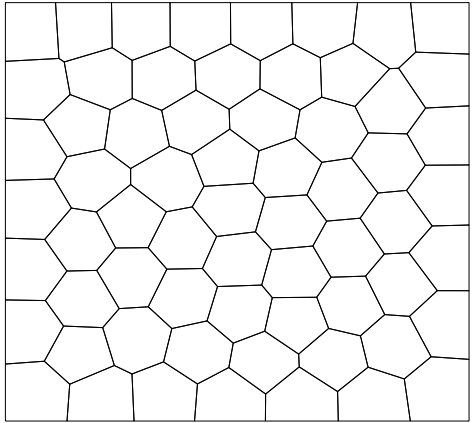}
}
\hspace{1cm}
\subcaptionbox{A simplicial subdivision $\tilde{\mathcal{T}_1}$.}{
\includegraphics[scale=0.35]{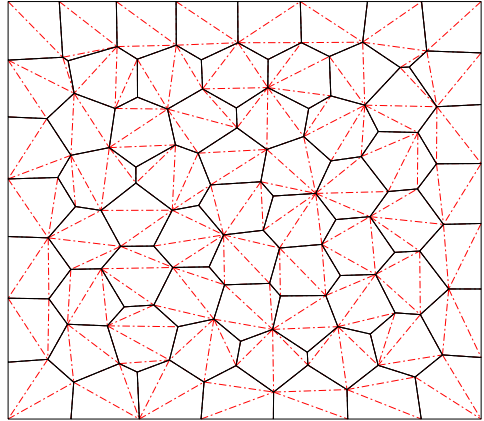}
}
\subcaptionbox{ An
example of an agglomerated mesh with many tiny faces
$\mathcal{T}_2$.}{
\includegraphics[scale=0.35]{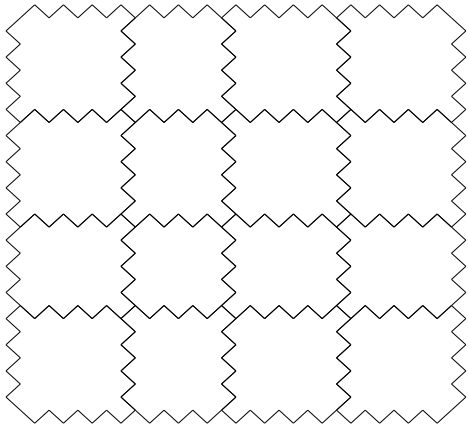}
}
\hspace{1cm}
\subcaptionbox{A simplicial background mesh $\tilde{\mathcal{T}_2}$.}{
\includegraphics[scale=0.35]{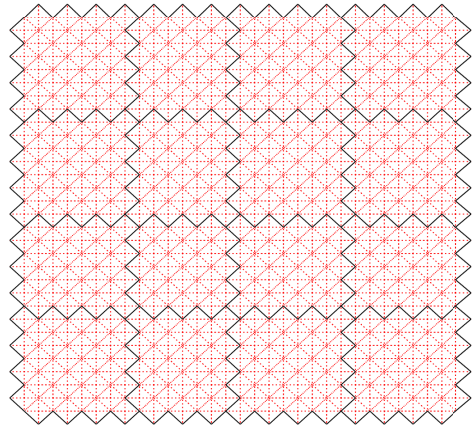}
}
\caption{Two examples of a polygonal meshes
and respective simplicial subdivisions.
}
\label{fig_one}
\end{figure}

By $\Gamma$ we shall denote the union of all ($d-1$)-dimensional faces associated with the subdivision $\mathcal{T}$ including the boundary. Further, we set $\Gamma_{\dint}:=\Gamma\backslash\partial\Omega$. Correspondingly, we define $\tilde{\Gamma}$ and $\tilde{\Gamma}_{\dint}$ for $\tilde{\mathcal{T}}$. Note that, by construction, $\Gamma\subset \tilde{\Gamma}$ and $\Gamma_{\dint}\subset \tilde{\Gamma}_{\dint}$.

For a nonnegative integer $r$, we denote the set of all polynomials of
total degree at most $r$ by $\mathcal{P}_r(T)$.
For $r \geq 1$, we consider the finite element space
\begin{equation}
V_h^r :=\{v\in L^2(\Omega):v|_{T}
\in\mathcal{P}_{r}(T),\,\el\}.
\end{equation}
We stress that $V_h^r$ is element-wise discontinuous polynomial with respect to the \emph{polytopic} mesh $\mathcal{T}$; in this context, the dimension of $V_h^r$ coincides with the dimension of discontinuous Galerkin finite element spaces on polytopic meshes, cf., \cite{DGpoly1,DGpoly2,DGpolyparabolic}. In particular, the dimension of $V_h^r$ is \emph{not} dependent on the number of vertices of the mesh $\mathcal{T}$. Correspondingly, we define 
\[
\tilde{V}_h^r :=\{v\in L^2(\Omega):v|_{T}\in\mathcal{P}_{r}(T),\,T\in\tilde{\mathcal{T}}\},
\] 
the respective discontinuous polynomial space on the sub-triangulation $\tilde{\mathcal{T}}$. Note that $V_h^r\subset \tilde{V}_h^r$.

Further, let $T^+,T^-\in\mathcal{T}$ be two (generic) elements sharing a facet
$e:=\partial T^+\cap\partial T^-\subset\Gamma_{\dint}$ with respective outward normal unit vectors $\mbf{n}^+$ and $\mbf{n}^-$ on $e$. For a function $v:\Omega\to\mathbb{R}$ that may be discontinuous across $\Gamma_{\dint}$,
we set $v^+:=v|_{e\subset\partial T^+}$, $v^-:=v|_{e\subset\partial T^-}$, and we define the jump and average by
\[ \jump{v}:=v^+\mbf{n}^++v^-\mbf{n}^-\quad\text{and}\quad \av{v}:=\frac{1}{2}(v^++v^-);\]
if $e\in \partial T\cap\partial\Omega$, we set $\jump{v}:=v^+\mbf{n}$. Also, we define $h_{T}:=\diam(T)$ and we set ${\bf h}:\Omega\backslash\Gamma\to\mathbb{R}$,
with ${\bf h}|_{T}=h_{T}$, $\el$.
Similarly, we set $\tilde{\mbf{h}}$ for the meshsize function of $\tilde{\mathcal{T}}$. Throughout this work, we assume that the families of meshes considered in this work are locally
quasi-uniform and that there exists constant $c_{\Delta}>1$, independent of the meshsizes such that
\[
c_{\Delta}^{-1} \mbf{h}\le \tilde{\mbf{h}}\le c_{\Delta}\mbf{h},
\]
uniformly as $\mbf{h}\to 0$.
Moreover, for the restriction a function $v$ on an element $T\in\mathcal{T}$, $v|_T:T\to\mathbb{R}$, which may be discontinuous across $\partial T$, we shall use the notational convention that $v^+|_{\partial T}$ signifies the trace from within $T$ while $v^-|_{\partial T}$ signifies the trace from within $\Omega\backslash T$. Using this convention we also define the \emph{signed jump} (also known as \emph{upwind} jump in the discontinuous Galerkin literature) on each face $e$ by
\[
\lfloor v\rfloor|_e :=v^+|_e-v^-|_e;
\]
note that $ |\jump{v}| = |\lfloor v\rfloor |$, i.e., the two jump notions can differ at most up to a sign on each face.

Also, we shall denote by $\partial_-T$ and by $\partial_+T$ the \emph{inflow} and \emph{outflow} parts of the boundary of an element $T$, defined as
\[
\partial_-T :=\{x\in \partial T: \mbf{b}(x)\cdot \mbf{n}(x)<0\}\quad\text{ and }\quad \partial_+T :=\{x\in \partial T: \mbf{b}(x)\cdot \mbf{n}(x)>0\},
\] 
respectively.

For the definition of the proposed method, we shall require a \emph{recovery operator} of the form 
\begin{equation}\label{recovery}
\mathcal{E}:V_h^r\to V\cap \tilde{V}_h^r,
\end{equation}
for some non-negative integer $r$, mapping element-wise discontinuous functions into functions in the solution space for the boundary value problem $V$, for some $r\in\mathbb{N}\cup\{0\}$. When the diffusion tensor $a$ is strictly positive definite (i.e., when \eqref{nonneg} holds with strict inequality) we may take $V=H^1(\Omega)$.

Recovery operators of the form \eqref{recovery} have appeared in various settings in the theory of finite element methods, e.g., \cite{MR0400739,MR1011446,MR1248895, KP,brenner,GHV}. They are typically used to recover a ``conforming'' function from a ``non-conforming'' one, often under minimal regularity requirements.

A popular and very practical example for $\mathcal{E}$ is the nodal \emph{averaging operator} for which the following celebrated stability result was proven by Karakashian and Pascal in \cite{KP}.
\begin{lemma} \label{lemma_E1}
Let $\mathcal{T}$ a polytopic mesh and $\tilde{\mathcal{T}}$ its related sub-triangulation satisfying the above assumptions. Denoting by $\mathcal{N}$ the set of all Lagrange nodes of $\tilde{V}_h^r\cap H^1(\Omega)$, the operator $\mathcal{E}_r:V_h^r\to \tilde{V}_h^r\cap H^1(\Omega)$ is defined by:
\[
\mathcal{E}_r(v)(\nu):= \frac{1}{|\omega_{\nu}|}\sum_{T\in\omega_{\nu}} v|_{T}({\nu}),
\]
with
$
\omega_{\nu}:=\bigcup_{T\in\mathcal T: \nu\in\bar{T}}T, 
$
the set of elements sharing the node $\nu\in\mathcal{N}$ and $|\omega_{\nu}|$ their cardinality. Then, the following bound holds
\begin{equation}\label{KP_stab}
\sum_{T\in\mathcal{T}} \snorm{v - \mathcal{E}_r(v)}{\alpha,T}^2 \leq C_{|\alpha|} \norm{\mbf{h}^{1/2-\alpha}\jump{v}}{\Gamma_{\dint}}^2,
\end{equation}
with $|\alpha|\in\mathbb{N}\cup\{0\}$, $C_{|\alpha|}\equiv C_{|\alpha|}(r)>0$ a constant independent of $\mbf{h}$, $v$ and $\tilde{\mathcal{T}}$, but depending on the shape-regularity of $\tilde{\mathcal{T}}$, on $c_{\Delta}$, and on the polynomial degree $r$.
\end{lemma}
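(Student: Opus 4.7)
The plan is to reduce the claim to the original Karakashian--Pascal stability estimate on the simplicial sub-triangulation $\tilde{\mathcal{T}}$, and then to carefully account for the discrepancy between $\mathcal{T}$ and $\tilde{\mathcal{T}}$ by exploiting that elements of $V_h^r$ are polynomial on each polytopic element.

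First I would observe that $V_h^r\subset \tilde{V}_h^r$ by construction, so every $v\in V_h^r$ is a piecewise polynomial on the shape-regular simplicial mesh $\tilde{\mathcal{T}}$ and thus the definition of $\mathcal{E}_r$ makes sense, with values in $\tilde{V}_h^r\cap H^1(\Omega)$. The classical Karakashian--Pascal inequality (proved on shape-regular simplicial meshes in \cite{KP}) then yields, for $|\alpha|\in\mathbb{N}\cup\{0\}$,
\[
\sum_{\tilde T\in\tilde{\mathcal{T}}} \snorm{v - \mathcal{E}_r(v)}{\alpha,\tilde T}^2 \le \tilde C_{|\alpha|} \norm{\tilde{\mbf{h}}^{1/2-\alpha}\jump{v}}{\tilde{\Gamma}_{\dint}}^2,
\]
with $\tilde C_{|\alpha|}$ depending only on $r$ and the shape-regularity of $\tilde{\mathcal{T}}$.

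The next step, and the key geometric observation, is that although $\tilde{\Gamma}_{\dint}$ strictly contains $\Gamma_{\dint}$, the extra faces $e\in\tilde{\Gamma}_{\dint}\setminus\Gamma_{\dint}$ lie in the interior of some polytopic element $T\in\mathcal{T}$. Since $v|_T\in\mathcal{P}_r(T)$, the function $v$ is continuous across every such $e$, so $\jump{v}|_e=0$. Consequently the right-hand side collapses to an integral over $\Gamma_{\dint}$:
\[
\norm{\tilde{\mbf{h}}^{1/2-\alpha}\jump{v}}{\tilde{\Gamma}_{\dint}}^2 = \norm{\tilde{\mbf{h}}^{1/2-\alpha}\jump{v}}{\Gamma_{\dint}}^2.
\]
The local quasi-uniformity assumption $c_{\Delta}^{-1}\mbf{h}\le \tilde{\mbf{h}}\le c_{\Delta}\mbf{h}$ then lets us replace $\tilde{\mbf{h}}$ by $\mbf{h}$ at the cost of a factor $c_{\Delta}^{|1/2-\alpha|}$, which is absorbed into the final constant $C_{|\alpha|}$.

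It remains to pass from the left-hand-side sum over simplicial elements to a sum over polytopic elements. Because $\bar\Omega=\cup_{T\in\mathcal{T}}\bar T$ and $\tilde{\mathcal{T}}$ refines $\mathcal{T}$, each $\tilde T\in\tilde{\mathcal T}$ is contained in a unique $T\in\mathcal{T}$, so by additivity of the seminorm
\[
\sum_{T\in\mathcal{T}}\snorm{v-\mathcal{E}_r(v)}{\alpha,T}^2 = \sum_{\tilde T\in\tilde{\mathcal{T}}}\snorm{v-\mathcal{E}_r(v)}{\alpha,\tilde T}^2,
\]
which finishes the proof. The only real obstacle is confirming that jumps of $v\in V_h^r$ across the internal faces of $\tilde{\mathcal{T}}$ vanish; once that is in place, the statement is an almost immediate consequence of the simplicial version together with mesh-size comparability.
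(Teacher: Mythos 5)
Your proposal is correct and follows the same route as the paper, which simply cites Karakashian and Pascal \cite{KP}; you have merely written out the reduction that the citation leaves implicit, namely that jumps of $v\in V_h^r$ vanish on the sub-faces $\tilde{\Gamma}_{\dint}\setminus\Gamma_{\dint}$ interior to each polytopic element and that $\tilde{\mbf{h}}$ and $\mbf{h}$ are comparable via $c_{\Delta}$, which is exactly why the constant is stated to depend on $c_{\Delta}$ and the shape-regularity of $\tilde{\mathcal{T}}$. The only detail worth adding is that $\mathcal{E}_r$ here averages over the polytopic elements meeting a node rather than over the simplices, so it is a weighted variant of the literal Karakashian--Pascal operator; their argument bounds the nodal deviation by the jumps across the faces meeting that node for any such convex averaging, so the estimate is unaffected.
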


\begin{proof} See Karakashian and Pascal \cite{KP}. 
\end{proof}

The bound \eqref{KP_stab} shows, in particular, that $\norm{\mbf{h}^{-1/2}\jump{v}}{\Gamma_{\dint}}^2$ is a norm on the orthogonal complement $W_h^r$ of $\tilde{V}_h^r\cap H^1(\Omega)$ in $V_h^r$ with respect to the standard $H^1$-inner product.

\begin{remark}
It is possible to make a number of different choices 
\begin{equation}
\mathcal{E}:V_h^r\to V\cap \hat{V}_h^s,
\end{equation}
for instance, for $\hat{V}_h^s$ say a non-conforming finite element
space, e.g., Crouzeix-Raviart elements and $s$ may be in general
different to $r$. Indeed, various choices of $\mathcal{E}$ may give
rise to different methods. As the present work's focus is the
development of conforming methods on polytopic meshes, we prefer to
keep the presentation simple and consider recovery operators into
conforming element-wise polynomials of the same order ($r=s$) but,
crucially, posed on different meshes. For an investigation of the
case $r\neq s$ on standard element shapes, we refer to \cite{RFEM}.
\end{remark}

\section{Design concepts for recovered finite element methods}\label{concept}

Equipped with a finite element space framework and the concept of
recovery operators, we can now describe some general principles in the
design of recovered finite element methods on polytopic meshes.

To this end, we consider a generic conforming Galerkin finite element method for the problem \eqref{pde}, \eqref{bcs}, which is applicable on a \emph{simplicial} mesh, say $\tilde{\mathcal{T}}$, with respective finite element space $\tilde{V}_h\subset V$, reading: find $\tilde{u}_h\in \tilde{V}_h$, such that
\begin{equation}\label{FEM-gen}
a_h(\tilde{u}_h,\tilde{v}_h) = \ell_h(\tilde{v}_h),\qquad\text{ for all } \tilde{v}_h\in\tilde{V}_h;
\end{equation}
an example of a stable such conforming method is the streamline upwind Petrov-Galerkin approach presented and analysed in \cite{Houston2001}.

Suppose also that the bilinear form $a_h$ is coercive in $\tilde{V}_h$ with respect to an ``energy''-like norm $\norm{\cdot}{a}$, i.e., for all $w\in \tilde{V}_h$, there exists a $C_{coer}>0$, such that
\begin{equation}
\label{eq:coer}
C_{coer}\norm{w}{a}^2\le a_h(w,w), 
\end{equation}
and that $a$ is also continuous in $V\times \tilde{V}_h$, in the sense that for all $z\in V$ and all $w\in \tilde{V}_h$, there exists a constant $C_{cont}>0$, such that
\begin{equation}
\label{eq:cont}
| a_h(z,w) | \le C_{cont} \ndg{z}_a \norm{w}{a}, 
\end{equation}
for some norm $\ndg{\cdot}_a$, possibly different to $\norm{\cdot}{a}$. Of course, this is relevant if $\ndg{\cdot}_a$ is stronger than $\norm{\cdot}{a}$ for, otherwise, we can replace $\ndg{\cdot}_a$ by $\norm{\cdot}{a}$ throughout this section. Hence, without any loss of generality in this context, we henceforth assume $\norm{w}{a}\le C\ndg{w}_a$ for all $w\in V$ with $C>0$ independent of $w$. 

A corresponding \emph{recovered finite element method} (R-FEM) can be, then, defined on a \emph{polytopic} mesh $\mathcal{T}$, (with respective finite element space $V_h^r$,) which admits a subtriangulation $\tilde{\mathcal{T}}$, (and a respective space $\tilde{V}_h^r$,) as discussed in detail in Section \ref{sec:rfem_FES} with the help of a recovery operator $\mathcal{E}:V_h^r\to V\cap \tilde{V}_h^r$. To this end, we consider the R-FEM: find $u_h\in V_h^r$ (and, consequently, also $\mathcal{E}(u_h)\in V\cap \tilde{V}_h^r $) such that
\begin{equation}\label{RFEM-gen}
a_h(\mathcal{E}(u_h),\mathcal{E}(v_h)) + s_h(u_h,v_h)= \ell_h(\mathcal{E}(v_h)),\qquad\text{ for all } v_h\in V_h^r,
\end{equation}
for $s_h: \big(\tilde{V}_h^r\cap \prod_{T\in\mathcal{T}}H^1(T)\big)\times\big(\tilde{V}_h^r\cap \prod_{T\in\mathcal{T}}H^1(T)\big)\to \mathbb{R}$ a symmetric bilinear form, henceforth referred to as the \emph{stabilization}, whose role is to remove the possible rank-deficiency due to the use of a recovery operator. Note that $V_h^r\subset\big(\tilde{V}_h^r\cap \prod_{T\in\mathcal{T}}H^1(T)\big)$.

An immediate choice for stabilization can be:
\begin{equation}\label{stab_gen}
s_h(w_h,v_h)= C\int_{\Omega} \mbf{h}^{m}\big(w_h-\mathcal{E}(w_h)\big)\big(v_h-\mathcal{E}(v_h)\big)\ \ud x,
\end{equation}
for $v_h\in V_h^r$, with $m\in \mathbb{R}$ a real number, to be determined by the error analysis in each case. When $\mathcal{E}$ is as in Lemma \ref{lemma_E1}, \eqref{KP_stab} allows also to consider the alternative stabilization
\begin{equation}\label{stab_KP}
s_h(w_h,v_h)= C \int_{\Gamma_{\dint}} \mbf{h}^{m-1}\jump{w_h}\cdot \jump{v_h}\ \ud s.
\end{equation}
To keep the discussion general at this point, we avoid prescribing a specific stabilization, and we prefer to make a structural assumption on $s_h$ instead.
\begin{assumption}\label{stabilisation_ass2}
The stabilization bilinear form satisfies
\[
s_h(w_h,v_h)\le C_{stab}\big(s_h(w_h,w_h)\big)^{1/2}\big(s_h(v_h,v_h)\big)^{1/2} \quad\text{for all }w_h,v_h\in V_h^r,
\] 
for some constant $C_{stab}>0$ independent of $w_h,v_h$ and of $\mbf{h}$.
\end{assumption}
Now given the PDE problem \eqref{pde}, \eqref{bcs} in weak form,
reading: find $u\in V$ such that
\begin{equation}\label{PDE_weak}
a(u,v)=\ell(v) \quad\text{ for all } v\in V.
\end{equation}

We can now show the following best approximation result:

\begin{lemma}
Let $u\in V$ satisfy (\ref{PDE_weak}) and, for $r\geq 1$, suppose
$u_h\in V_h^r$ is the R-FEM approximation with a stabilisation term
satisfying Assumption \ref{stabilisation_ass2} then we have
\begin{equation}
\begin{split}
\frac{1}{2} \norm{u-\mathcal{E}(u_h)}{a}^2
+
C_{coer}^{-1}s_h(u_h,u_h)
&\le
\inf_{v_h\in V_h^r}\bigg(\Big(1+2\frac{C_{cont}^2}{C_{coer}^2}\Big)\ndg{u-\mathcal{E}(v_h)}_a^2
+
\frac{C_{stab}^2}{C_{coer}}s_h(v_h,v_h)\bigg)
\\&\qquad\qquad\qquad\qquad\qquad\qquad\qquad\qquad +
\mathrm{INC}(u),
\end{split}
\end{equation}
where 
\[
\mathrm{INC}(u):=\frac{2}{C_{coer}^2}\sup_{0\neq \tilde{w}_h\in \mathcal{E}(V_h^r)}\bigg( \frac{\big|\ell_h(\tilde{w}_h) -\ell(\tilde{w}_h)\big|}{\norm{\tilde{w}_h}{a}}+\frac{ \big|a( u,\tilde{w}_h)- a_h( u,\tilde{w}_h)\big|}{\norm{\tilde{w}_h}{a}}\bigg)^2,
\]
and $C_{coer}, C_{cont}$ are the coercivity and continuity constants defined in (\ref{eq:coer}), (\ref{eq:cont}) independent of $u$, $u_h$, $h$ and of $\mathcal E$.
\end{lemma}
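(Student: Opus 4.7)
The plan is a Strang-type best-approximation argument exploiting that $\mathcal{E}(V_h^r)\subset V\cap\tilde V_h^r$ is conforming. For arbitrary $v_h\in V_h^r$, I would set $w_h:=u_h-v_h\in V_h^r$, so that $\mathcal{E}(w_h)=\mathcal{E}(u_h)-\mathcal{E}(v_h)\in V$. Start from coercivity \eqref{eq:coer} applied to $\mathcal{E}(w_h)$, add the stabilisation term $s_h(u_h,w_h)$ to both sides, and invoke the R-FEM identity \eqref{RFEM-gen} tested against $w_h$ to replace $a_h(\mathcal{E}(u_h),\mathcal{E}(w_h))+s_h(u_h,w_h)$ by $\ell_h(\mathcal{E}(w_h))$. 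Next, add and subtract $a(u,\mathcal{E}(w_h))=\ell(\mathcal{E}(w_h))$ (valid because $\mathcal{E}(w_h)\in V$ and $u$ solves \eqref{PDE_weak}) together with $a_h(u,\mathcal{E}(w_h))$, so that the right-hand side splits into an approximation piece $a_h(u-\mathcal{E}(v_h),\mathcal{E}(w_h))$ and the two inconsistency contributions that define $\mathrm{INC}(u)$.

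I would then bound each piece by a constant multiple of $\|\mathcal{E}(w_h)\|_a$: the approximation piece by continuity \eqref{eq:cont}, yielding $C_{cont}\ndg{u-\mathcal{E}(v_h)}_a\|\mathcal{E}(w_h)\|_a$, and the two inconsistency terms together by $C_{coer}\sqrt{\mathrm{INC}(u)/2}\,\|\mathcal{E}(w_h)\|_a$, directly from the definition of $\mathrm{INC}(u)$. To surface $s_h(u_h,u_h)$ on the left-hand side, split $s_h(u_h,w_h)=s_h(u_h,u_h)-s_h(u_h,v_h)$ and apply Assumption \ref{stabilisation_ass2} with Young's inequality to get $|s_h(u_h,v_h)|\le \tfrac{1}{2}s_h(u_h,u_h)+\tfrac{C_{stab}^2}{2}s_h(v_h,v_h)$, hence $s_h(u_h,w_h)\ge \tfrac{1}{2}s_h(u_h,u_h)-\tfrac{C_{stab}^2}{2}s_h(v_h,v_h)$. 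A further Young's inequality with parameter $C_{coer}/2$ on the terms linear in $\|\mathcal{E}(w_h)\|_a$ absorbs $\tfrac{C_{coer}}{2}\|\mathcal{E}(w_h)\|_a^2$ into the coercivity contribution; dividing by $C_{coer}/2$ gives
\[
\|\mathcal{E}(w_h)\|_a^2 + C_{coer}^{-1}\,s_h(u_h,u_h) \le \frac{2C_{cont}^2}{C_{coer}^2}\ndg{u-\mathcal{E}(v_h)}_a^2 + \frac{C_{stab}^2}{C_{coer}}s_h(v_h,v_h) + \mathrm{INC}(u).
\]

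To close, I would apply the triangle inequality $\tfrac{1}{2}\|u-\mathcal{E}(u_h)\|_a^2\le \|u-\mathcal{E}(v_h)\|_a^2+\|\mathcal{E}(w_h)\|_a^2$, bound $\|\cdot\|_a\le\ndg{\cdot}_a$ on the first summand, and take the infimum over $v_h\in V_h^r$. The main bookkeeping hurdle is the asymmetric form of the stabilisation on the left: because the statement asks for $s_h(u_h,u_h)$ rather than the more natural $s_h(u_h-v_h,u_h-v_h)$, the splitting of $s_h(u_h,w_h)$ via Assumption \ref{stabilisation_ass2} is forced, and the various Young parameters must be calibrated so that the factor $\tfrac12$ on $s_h(u_h,u_h)$ together with the coefficients $1+2C_{cont}^2/C_{coer}^2$ and $C_{stab}^2/C_{coer}$ emerge exactly as stated.
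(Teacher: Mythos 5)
Your proposal is correct and follows essentially the same route as the paper's proof: coercivity applied to $\mathcal{E}(u_h-v_h)$, the R-FEM identity together with the weak form \eqref{PDE_weak} to introduce the approximation and inconsistency terms, continuity, Assumption \ref{stabilisation_ass2} with Young's inequality to handle $s_h(u_h,v_h)$, and a final triangle inequality before taking the infimum. The only differences are the sign convention for $w_h$ versus the paper's $\xi=\mathcal{E}(v_h-u_h)$ and the point at which the stabilisation cross-term is absorbed, neither of which changes the argument.
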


\begin{proof}
With $\xi:=\mathcal{E}(v_h-u_h)\in \tilde{V}_h^r\cap
V\equiv\tilde{V}_h$, coercivity (\ref{eq:coer}) implies
\[
C_{coer}\norm{\xi}{a}^2+s_h(u_h,u_h)\le a_h(\xi,\xi) + s_h(u_h,u_h),
\]
and hence, in view of \eqref{RFEM-gen}, \eqref{PDE_weak} adding and subtracting appropriate terms yields
\[
\begin{split}
C_{coer}\norm{\xi}{a}^2+s_h(u_h,u_h)
&\leq
a_h(\mathcal{E}(v_h),\xi) -\ell_h(\xi)+s_h(u_h,v_h)\\
&\leq
a_h(\mathcal{E}(v_h) - u,\xi) + s_h(u_h,v_h) +\ell(\xi) - \ell_h(\xi)+ a_h( u,\xi) - a( u,\xi).
\end{split}
\]
Making use of the continuity (\ref{eq:cont}) of $a_h$ along with
and Assumption \ref{stabilisation_ass2} we see
\[
\begin{split}
\norm{\xi}{a}^2
+
\frac{1}{C_{coer}} s_h(u_h,u_h)
&\le
\frac{C_{cont}}{C_{coer}}\ndg{u-\mathcal{E}(v_h)}_a\norm{\xi}{a}
+
\frac{C_{stab}}{C_{coer}}\big(s_h(u_h,u_h)\big)^{1/2}\big(s_h(v_h,v_h)\big)^{1/2}
\\
&\quad
+
\frac{1}{C_{coer}}\big( \ell(\xi) - \ell_h(\xi)+ a_h( u,\xi) - a( u,\xi) \big).
\end{split}
\]
Finally, invoking H\"olders inequality in standard fashion shows the
abstract bound
\begin{equation*}
\begin{split}
\norm{\xi}{a}^2
+
\frac{1}{C_{coer}}s_h(u_h,u_h)
&\le
2\frac{C_{cont}^2}{C_{coer}^2}\ndg{u-\mathcal{E}(v_h)}_a^2
+
\frac{C_{stab}^2}{C_{coer}}s_h(v_h,v_h)+\mathrm{INC}(u).
\end{split}
\end{equation*}
The result follows by the triangle inequality and noticing that $v_h$ was arbitrary.
\end{proof}

To arrive at an a priori error bound, we make the following (rather
mild and immediately satisfiable by all the scenarios we have in mind)
additional set of assumptions.

\begin{assumption}\label{ass:broken_stab}
There exists a ``broken'' version of $\norm{\cdot}{a}$, say
$\norm{\cdot}{a,\mathcal{T}}$, elementwise with respect to
$\mathcal{T}$, for which we have
$\norm{w}{a,\mathcal{T}}=\norm{w}{a}$ whenever $w\in \tilde{V}_h^r\cap
V$. Moreover, for $C_{ker},c_{ker} > 0$ representing constants independent of
$\mbf{h}$ and of $w$ the stabilization $s_h$ satisfies
\[
c_{ker}s_h(w,w)\le \norm{w-\mathcal{E}(w)}{a,\mathcal{T}}^2\le C_{ker}s_h(w,w)\quad \text{ for all } w\in \tilde{V}_h^r\cap \prod_{T\in\mathcal{T}}H^1(T).
\]
That is, this equivalence holds for all elementwise polynomials defined over the simplicial submesh $\mathcal{T}$ that are continuous within in each element $T$ of the related polytopic mesh. 

Finally, assume that there exists a ``broken'' version of
$\ndg{\cdot}_a$, denoted by $\ndg{\cdot}_{a,\mathcal{T}}$, elementwise
with respect to $\mathcal{T}$, for which we have:
\begin{enumerate}
\item
$\ndg{w}_{a,\mathcal{T}}=\ndg{w}_{a}$ whenever $w\in \tilde{V}_h^r\cap V$ and
\item
$\norm{w}{a,\mathcal{T}}\le C\ndg{w}_{a,\mathcal{T}}$ for all $w \in\big(\tilde{V}_h^r\cap \prod_{T\in\mathcal{T}}H^1(T)\big)$ for some $C>0$ independent of $w$ and $\mbf{h}$. 
\end{enumerate}
\end{assumption}

\begin{theorem}\label{thm:abstract}
Assume that the recovery operator $\mathcal{E}$ in the definition of R-FEM \eqref{RFEM-gen} is such that $\mathcal{E}(v)=v$ for all $v\in \tilde{V}_h^r\cap V$ and, also, that it is stable with respect to the $\ndg{w}_{a,\mathcal{T}}$-norm, viz.,
\[
\ndg{\mathcal{E}(w)}_{a}\le C\ndg{w}_{a,\mathcal{T}} \ \ \forall w\in V_h^r.
\]
Assume that the exact solution satisfies $u\in \prod_{T\in\mathcal{T}} H^k(T)$, for some $k\ge 2$, and that any inconsistency of the Galerkin method posed on simplices \eqref{FEM-gen} is of optimal order, viz.,
\[
\mathrm{INC}(u)\le C\sum_{T\in\tilde{\mathcal{T}}}\mbf{h}^{2s}|u|_{s,T}^2,
\]
for $1\le s=\min\{k,r\}$, with constant $C>0$, independent of $u$ and $\mbf{h}$. Then, we have the bound
\begin{equation}
\norm{u-\mathcal{E}(u_h)}{a}^2+s_h(u_h,u_h)
\le C\sum_{T\in\tilde{\mathcal{T}}}\mbf{h}^{2s}|u|_{s,T}^2.
\end{equation}
\end{theorem}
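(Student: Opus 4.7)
The strategy is to apply the preceding abstract best approximation lemma with a carefully chosen $v_h \in V_h^r$, and to show that each term appearing in the resulting upper bound is of order $\sum_{T \in \tilde{\mathcal{T}}} h_T^{2s} |u|_{s,T}^2$. Since the inconsistency term $\mathrm{INC}(u)$ is already controlled by hypothesis, the work is to estimate the infimum $\inf_{v_h \in V_h^r} \big( \ndg{u - \mathcal{E}(v_h)}_a^2 + s_h(v_h,v_h) \big)$.

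For the choice of $v_h$, I would take the elementwise (polytopic) polynomial projection $v_h := \Pi_\mathcal{T} u \in V_h^r$, defined on each $T \in \mathcal{T}$ as an $L^2$-projection or suitable averaged/quasi-interpolant of $u$ into $\mathcal{P}_r(T)$, constructed via a Stein extension of $u|_T$ to a bounding simplex when needed. Standard Bramble--Hilbert estimates, valid under the shape-regularity of $\tilde{\mathcal{T}}$ together with the local quasi-uniformity $\mbf{h} \sim \tilde{\mbf{h}}$, then yield elementwise bounds $|u - v_h|_{m,T}^2 \le C h_T^{2(s-m)} |u|_{s,T}^2$ for $0 \le m \le s$, which add up to $\ndg{u - v_h}_{a,\mathcal{T}}^2 \le C \sum_T h_T^{2s} |u|_{s,T}^2$.

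To handle $s_h(v_h,v_h)$, I would invoke Assumption~\ref{ass:broken_stab} to replace it, up to $c_{ker}^{-1}$, by $\norm{v_h - \mathcal{E}(v_h)}{a,\mathcal{T}}^2$; inserting $\pm u$ and using the triangle inequality splits this into the interpolation quantity $\norm{v_h - u}{a,\mathcal{T}}^2$, already controlled, and $\norm{u - \mathcal{E}(v_h)}{a,\mathcal{T}}^2$. Similarly, for the term $\ndg{u - \mathcal{E}(v_h)}_a^2$, I would write $u - \mathcal{E}(v_h) = (u - v_h) + (v_h - \mathcal{E}(v_h))$, bound the first piece by the interpolation estimate above, and for the second use the hypothesis $\ndg{\mathcal{E}(w)}_a \le C \ndg{w}_{a,\mathcal{T}}$ together with the fact that $\mathcal{E}$ is a projection onto $\tilde V_h^r \cap V$, reducing everything to a Karakashian--Pascal-type bound of the form \eqref{KP_stab} in terms of $\norm{\mbf{h}^{-1/2}\jump{v_h}}{\Gamma_{\dint}}$.

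The main obstacle will be the estimate on the jumps of $v_h$ across the polytopic faces, since these live on the coarser skeleton $\Gamma_{\dint}$ rather than on $\tilde{\Gamma}_{\dint}$. Because the exact solution $u$ is single-valued on each interior face $e = \partial T^+ \cap \partial T^-$, the jump can be rewritten as $\jump{v_h}|_e = (v_h^+ - u)\mbf{n}^+ + (v_h^- - u)\mbf{n}^-$, after which a trace inequality on each simplex of $\tilde{\mathcal{T}}$ intersecting $e$, combined with the elementwise interpolation bounds on $v_h - u$, delivers $\norm{\mbf{h}^{-1/2}\jump{v_h}}{\Gamma_{\dint}}^2 \le C \sum_T h_T^{2(s-1)}|u|_{s,T}^2 \cdot h_T$, i.e., the desired $\mathcal{O}(h^{2s})$ bound. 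Inserting these pieces into the abstract lemma and using Assumption~\ref{ass:broken_stab}(2) to pass from $\norm{\cdot}{a,\mathcal{T}}$ to $\ndg{\cdot}_{a,\mathcal{T}}$ where required yields the claimed estimate.
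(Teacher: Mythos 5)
Your overall strategy---insert $v_h=\Pi u$ into the preceding best-approximation lemma and show that $\ndg{u-\mathcal{E}(\Pi u)}_a^2+s_h(\Pi u,\Pi u)$ is of optimal order---is the same as the paper's, and your treatment of the pure interpolation term $\ndg{u-\Pi u}_{a,\mathcal{T}}$ matches. Where you diverge, and where a genuine gap opens, is in the treatment of $s_h(\Pi u,\Pi u)$ and of $\Pi u-\mathcal{E}(\Pi u)$. You propose to control $\norm{\Pi u-\mathcal{E}(\Pi u)}{a,\mathcal{T}}$ by a Karakashian--Pascal bound of the form \eqref{KP_stab} in terms of $\norm{\mbf{h}^{-1/2}\jump{\Pi u}}{\Gamma_{\dint}}$, and then estimate the jumps by trace inequalities on the simplices meeting each polytopic face. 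But Theorem \ref{thm:abstract} is stated for an \emph{abstract} recovery operator, norm $\norm{\cdot}{a}$ and stabilization: the only hypotheses on $\mathcal{E}$ are that it fixes $\tilde{V}_h^r\cap V$ and that it is stable in $\ndg{\cdot}_{a,\mathcal{T}}$, and Assumption \ref{ass:broken_stab} ties $s_h$ to $\norm{w-\mathcal{E}(w)}{a,\mathcal{T}}$, not to face jumps. A bound like \eqref{KP_stab} is a property of the specific nodal averaging operator and of Sobolev-type norms; it is nowhere among the hypotheses, so invoking it imports an extra structural assumption and your argument does not prove the theorem at its stated level of generality.

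The paper closes this step using only the stated hypotheses by introducing the \emph{conforming} interpolant $\tilde{\Pi}u\in\tilde{V}_h^r\cap V$ on the subtriangulation. Since $\mathcal{E}(\tilde{\Pi}u)=\tilde{\Pi}u$, Assumption \ref{ass:broken_stab} gives $s_h(\tilde{\Pi}u,\tilde{\Pi}u)=0$ and hence, by Assumption \ref{stabilisation_ass2}, $s_h(\tilde{\Pi}u,\cdot)=0$; therefore $s_h(\Pi u,\Pi u)=s_h(\tilde{\Pi}u-\Pi u,\tilde{\Pi}u-\Pi u)\le c_{ker}^{-1}\norm{(\tilde{\Pi}u-\Pi u)-\mathcal{E}(\tilde{\Pi}u-\Pi u)}{a,\mathcal{T}}^2$, and the triangle inequality together with the stability hypothesis bound this by $C\ndg{\tilde{\Pi}u-\Pi u}_{a,\mathcal{T}}^2\le C\big(\ndg{u-\tilde{\Pi}u}_{a,\mathcal{T}}+\ndg{u-\Pi u}_{a,\mathcal{T}}\big)^2$, which is optimal by approximation. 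Note that applying stability directly to $w=\Pi u$, as your splitting implicitly would, only yields $\norm{\Pi u-\mathcal{E}(\Pi u)}{a,\mathcal{T}}\le C\ndg{\Pi u}_{a,\mathcal{T}}$, which does not vanish: the whole point of the intermediate $\tilde{\Pi}u$ is to apply $I-\mathcal{E}$ to a quantity that is already small, and this is the key device your proof is missing. Separately, your final jump estimate $h_T^{2(s-1)}|u|_{s,T}^2\cdot h_T=h_T^{2s-1}$ is not $\mathcal{O}(h^{2s})$ as claimed; the exponent bookkeeping in the trace-inequality step would need to be retraced, although that is a secondary issue compared with the structural one above.
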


\begin{proof}
The triangle inequality, Assumption \ref{ass:broken_stab} along with
the optimality of the inconsistency terms imply
\begin{equation}
\norm{u-\mathcal{E}(u_h)}{a}^2+s_h(u_h,u_h)
\le C \ndg{u-\Pi u}_{a,\mathcal{T}}^2 
+ Cs_h(\Pi u, \Pi u)+C\sum_{T\in\tilde{\mathcal{T}}}\mbf{h}^{2s}|u|_{s,T}^2,
\end{equation}
with $\Pi:L^2(\Omega)\to V_h^r$ denoting the orthogonal
$L^2$-projection operator onto the polytopic finite element space
$V_h^r$. Let also $\tilde{\Pi}:L^2(\Omega)\to \tilde{V}_h^r\cap V$ be
the respective orthogonal $L^2$-projection onto the conforming finite
element space of the subtriangulation $\tilde{\mathcal{T}}$. The
above mesh assumptions on $\mathcal{T}$ and on $\tilde{\mathcal{T}}$
ensure that $\Pi$ and $\tilde{\Pi}$ admit optimal approximation
properties.

From hypothesis, we have
$\mathcal{E}(\tilde{\Pi}u)=\tilde{\Pi}u$. From Assumption
\ref{ass:broken_stab}, it then follows $s_h(\tilde{\Pi} u, \tilde{\Pi}
u)=0$. Now, from Assumption \ref{stabilisation_ass2}, we then also
have
\[
s_h(\tilde{\Pi} u,v)=s_h(v,\tilde{\Pi} u)=0 \text{ for any } v\in
\tilde{V}_h^r\cap \prod_{T\in\mathcal{T}}H^1(T).
\]
Using this,
together with Assumption \ref{ass:broken_stab} and the stability of
the recovery operator, we have, respectively,
\[
\begin{split}
s_h(\Pi u, \Pi u)
&=
s_h(\tilde{\Pi}u-\Pi u, \tilde{\Pi}u-\Pi u)
\\
&\le 
c_{ker}^{-1} \norm{\tilde{\Pi}u-\Pi u-\mathcal{E}(\tilde{\Pi}u-\Pi u)}{a,\mathcal{T}}^2
\\
&\le
C\big( \norm{\tilde{\Pi}u-\Pi u}{a,\mathcal{T}}^2+\ndg{\tilde{\Pi}u-\Pi u}_{a,\mathcal{T}}^2\big)
\\
&\le
C\ndg{\tilde{\Pi}u-\Pi u}_{a,\mathcal{T}}^2
\\
&\le
C\big( \ndg{u-\tilde{\Pi}u}_{a,\mathcal{T}}+ \ndg{u-\Pi u}_{a,\mathcal{T}}\big)^2.
\end{split}
\] 
The result now follows by appealing to the optimal approximation properties of $\Pi$ and $\tilde{\Pi}$.
\end{proof}

\begin{corollary}
With the assumptions of Theorem \ref{thm:abstract}, we also have the following bound:
\begin{equation}
\norm{u-u_h}{a,\mathcal{T}}^2+s_h(u_h,u_h)
\le C\sum_{T\in\tilde{\mathcal{T}}}\mbf{h}^{2s}|u|_{s,T}^2,
\end{equation}
for $1\le s=\min\{k,r\}$, with $C$ positive constant, independent of $u$ and of $\mbf{h}$.
\end{corollary}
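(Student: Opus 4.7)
The plan is to deduce this corollary as a direct consequence of Theorem \ref{thm:abstract} by inserting $\mathcal{E}(u_h) \in \tilde{V}_h^r \cap V$ as an intermediate and invoking the norm-equivalence part of Assumption \ref{ass:broken_stab}. Both pieces that arise from the triangle inequality are already controlled in Theorem \ref{thm:abstract}, so no new estimates are needed.

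Concretely, I would begin with the triangle inequality
\[
\norm{u - u_h}{a,\mathcal{T}} \;\le\; \norm{u - \mathcal{E}(u_h)}{a,\mathcal{T}} + \norm{\mathcal{E}(u_h) - u_h}{a,\mathcal{T}}.
\]
For the first term, the broken norm $\norm{\cdot}{a,\mathcal{T}}$ is defined elementwise from the same local energy contributions as $\norm{\cdot}{a}$; since $u \in V$ and $\mathcal{E}(u_h) \in \tilde{V}_h^r \cap V$, the difference carries no inter-element jumps, so $\norm{u - \mathcal{E}(u_h)}{a,\mathcal{T}} = \norm{u - \mathcal{E}(u_h)}{a}$, which is bounded by $C\sum_{T\in\tilde{\mathcal{T}}}\mbf{h}^{2s}|u|_{s,T}^2$ directly by Theorem \ref{thm:abstract}. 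For the second term, I would apply the upper bound in Assumption \ref{ass:broken_stab}, which applies since $u_h \in V_h^r \subset \tilde{V}_h^r \cap \prod_{T\in\mathcal{T}}H^1(T)$, giving
\[
\norm{u_h - \mathcal{E}(u_h)}{a,\mathcal{T}}^2 \;\le\; C_{ker}\, s_h(u_h,u_h),
\]
and $s_h(u_h,u_h)$ is likewise bounded by $C\sum_{T\in\tilde{\mathcal{T}}}\mbf{h}^{2s}|u|_{s,T}^2$ by Theorem \ref{thm:abstract}. Squaring, using $(a+b)^2 \le 2a^2 + 2b^2$, and adding a copy of the $s_h(u_h,u_h)$ bound then yields the claim.

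There is no real obstacle: the result is essentially a post-processing of Theorem \ref{thm:abstract}. The only point meriting attention is ensuring the first-term identification of the broken and unbroken $a$-norms on $u - \mathcal{E}(u_h)$. This is automatic given that the broken norm is defined as an elementwise sum and that $u,\mathcal{E}(u_h) \in V$ carry full conforming regularity; in situations where $\norm{\cdot}{a}$ contains boundary face terms, these too coincide with their broken counterparts on functions in $V$, so no hidden dependence on the mesh enters.
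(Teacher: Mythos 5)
Your proposal is correct and follows essentially the same route as the paper: a triangle inequality through $\mathcal{E}(u_h)$, the identification of the broken and unbroken $a$-norms on the conforming difference $u-\mathcal{E}(u_h)$, the bound $\norm{u_h-\mathcal{E}(u_h)}{a,\mathcal{T}}^2\le C_{ker}\,s_h(u_h,u_h)$ from Assumption \ref{ass:broken_stab}, and Theorem \ref{thm:abstract} to control both resulting pieces. If anything, your version is slightly more careful than the paper's (which states the kernel bound without the square), so no changes are needed.
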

\begin{proof}
The triangle inequality implies
\[
\norm{u-u_h}{a,\mathcal{T}}\le \norm{u-\mathcal{E}(u_h)}{a}+\norm{\mathcal{E}(u_h)-u_h}{a,\mathcal{T}}.
\]
Using, now, Assumption \ref{ass:broken_stab} we have
\[\norm{\mathcal{E}(u_h)-u_h}{a,\mathcal{T}}\le C_{ker}s_h(u_h,u_h),\]
the result then follows.
\end{proof}

\section{An alternative recovered finite element method}
\label{sec:alternative}
To highlight further the potential of the proposed R-FEM framework
applied to both standard/simplicial/box and, in general, polytopic
meshes, we present an alternative R-FEM. This method is motivated by
the desire to have a conforming approximation for the second order
part of the differential operator and an upwinded discontinuous
Galerkin discretization of the first order terms in \eqref{pde}. The
developments below also showcase an R-FEM error analysis using inf-sup
stability rather than coercivity results.

For sake of the simplicity of exposition, we assume that each entries
of the diffusion tensor $a$ are constant on each element
$T\in\mathcal{T}$,i.e.,
\begin{equation}\label{diffusion_tensor}
a\in [V_h^0]^{d\times d}_{\text{sym}}.
\end{equation}
Additionally, we assume the following standard assumption on $\mbf{b}$:
\begin{equation}\label{convection_field}
\mbf{b} \cdot \nabla_h \xi \in V_h^r \qquad \forall \xi \in V_h^r. 
\end{equation}
cf. \cite{hss,DGpoly2}. For given operator \eqref{recovery}, the new
\emph{recovered finite element method} reads: find $u_h\in V_h^r$ such
that
\begin{equation}\label{rem}
B(u_h,v_h)= \ell (v_h),\qquad \text{ for all }v_h\in V_h^r,
\end{equation}
where
\begin{equation}\label{method}
\begin{split}
B(u_h,v_h) &:=
\int_{\Omega} \Big( a\nabla \mathcal{E}(u_h)\cdot \nabla \mathcal{E}(v_h)
+
\mbf{b}\cdot\nabla_h u_h\mathcal{E}(v_h)
+
c\mathcal{E}(u_h)\mathcal{E}(v_h)\Big)\ud x\\
&\quad
-
\int_{\Gamma_{\ddd}} \!\Big( a \nabla \mathcal{E}(u_h)\cdot \mbf{n}\, \mathcal{E}(v_h)
+
a \nabla \mathcal{E}(v_h)\cdot \mbf{n}\, \mathcal{E}(u_h)
-
\sigma_{\ddd} \mathcal{E}(u_h)\mathcal{E}(v_h)\Big)\ud s\\ 
&\quad
-
\int_{\Gamma_{\ddd}^-} \big(\mbf{b}\cdot\mbf{n} \big) u_h \mathcal{E}(v_h)\ud s
-
\sum_{T\in\mathcal{T}}\int_{\partial_-T\backslash\partial\Omega} \big(\mbf{b}\cdot\mbf{n} \big)
\lfloor u_h\rfloor \mathcal{E}(v_h)\ud s\\
&\quad
+
s_h^{a,c}(u_h,v_h)
+
s_h^b(u_h,v_h),
\end{split}
\end{equation}
and
\[
\begin{split}
\ell (v_h)
&:=
\int_{\Omega} f \mathcal{E}(v_h)\,\ud x
-
\int_{\Gamma_{\ddd}} g_{\ddd}^{} \Big( a \nabla \mathcal{E}(v_h)\cdot \mbf{n}
-
\sigma_{\ddd} \mathcal{E}(v_h)\Big)\ud s
+
\int_{\Gamma_{\dn}}g_{\dn}^{}\, \mathcal{E}(v_h)\ud s\\
&\quad
-
\int_{\Gamma_{\ddd}^-} \big(\mbf{b}\cdot\mbf{n} \big) g_{\ddd}^{} \mathcal{E}(v_h)\,\ud s, 
\end{split}
\]
with $s_h^m(\cdot,\cdot):V_h^r\times V_h^r\to \mathbb{R}$,
$m\in\{\{a,c\},b\}$ denoting symmetric bilinear forms, henceforth
referred to as \emph{stabilisations}, and
$\sigma_{\ddd}:\Gamma_{\ddd}\to \mathbb{R}$ a positive penalty
function defined precisely below that weakly enforces the Dirichlet
boundary conditions.

This motivates the following choice for the elliptic stabilisation bilinear form:
\begin{equation}\label{stabilisationa}
s_h^{a,c}(u_h,v_h):= \int_{\Gamma_{\dint}}\sigma_{a,c} \jump{u_h}\cdot\jump{v_h}\,\ud s,
\end{equation}
for some non-negative function
$\sigma_{a,c}:\Gamma_{\dint}\to\mathbb{R}$, that will also be defined
below.

To ensure that sufficient numerical diffusion is included in the
proposed method for the case of small or vanishing diffusion tensor
$a$, we select
\begin{equation}\label{stabilisationb}
s_h^b(u_h,v_h):= \int_{\Gamma_{\dint}}\Big( \sigma_{b,1} 
\jump{u_h}\cdot\jump{v_h}
+
\sigma_{b,2} \jump{\mbf{h}(\mbf{b}\cdot\nabla u_h)}\cdot\jump{\mbf{h}(\mbf{b}\cdot\nabla v_h)}\Big)\ud s,
\end{equation}
for non-negative functions
$\sigma_{b,1},\sigma_{b,2}:\Gamma_{\dint}\to\mathbb{R}$, to be
selected below. We note that \eqref{stabilisationb} follows the spirit
of the, so-called, continuous interior penalty stabilisation procedure
due to Douglas and Dupont \cite{CIP_old} and to Burman and Hansbo
\cite{CIP}. Crucially, however, the trial and test functions $u_h$ and
$v_h$ in the present R-FEM context are discontinuous, cf.,
\cite{MR2192329} also. The inconsistency introduced by
the streamline derivative jump term in \eqref{stabilisationb} will be
dealt with in the a priori error analysis below.

Some remarks on the method are in order. To accommodate for the
potentially locally changing nature of the differential operator, we
have opted for weak imposition of essential boundary conditions,
following the classical ideas from \cite{ nits,reedhill}. For the
case of elliptic problems, strong imposition of essential boundary
conditions in the spirit of \cite{RFEM} is by all means possible
also. We note, however, that since essential boundary values are known
the above is actually a conforming method for $\mathcal{E}(u_h)$.

Also, we have opted for a, to the best of our knowledge, new method
for the discretisation of the first order term. This is to highlight the
flexibility of RFEM in incorporating different discretisations of
various terms of the differential operator. More importantly, since in
the absence of diffusion, the exact solution $u$ may exhibit jump
discontinuities across characteristic surfaces, we prefer not to
recover $u_h$ in the discretisation of the first order term. We stress
that more classical choices, such as streamline diffusion-type and/or
continuous interior penalty-type treatment of the first order term are
by all means possible. Indeed, certain such choices coincide with the
standard/classical conforming finite element versions for
$\mathcal{E}(u_h)$ when applied to standard triangular meshes (cf.,
the discussion in Section 3 of \cite{RFEM}).

We also remark on the assumptions on the diffusion tensor
\eqref{diffusion_tensor} and convection field
\eqref{convection_field}. The above RFEM method \eqref{rem} can be easily
extended to general positive semi-definite diffusion $a\in
[L^\infty(\Omega)]^{d\times d}_{\text{sym}}$ following the
inconsistent formulation introduced in \cite{gl}. For the general
convection field $\mbf{b}$, we would need to modify
\eqref{stabilisationb} by setting
\begin{equation}
s_h^b(u_h,v_h)
= \int_{\Gamma_{\dint}}
\Big( \sigma_{b,1} 
\jump{u_h}\cdot\jump{v_h}
+
\sigma_{b,2} \jump{\mbf{h}\Pi(\mbf{b}\cdot\nabla u_h)}\cdot\jump{\mbf{h}\Pi(\mbf{b}\cdot\nabla v_h)}\Big)\ud s,
\end{equation}
which will make the stability proof and error analysis more
complicated. We refrain from doing this here to focus on the key
ideas.

\section{A priori error analysis}
\label{sec:apriori}
We dedicate this section to the analysis of the method introduced in
Section \ref{sec:alternative}. The main ingredient to this is an
inf-sup condition over suitable norms. We let $\alpha, \beta, \gamma:
\Omega \to \mathbb{R}$ such that
\begin{equation}
\alpha|_T := |\sqrt{a}|^2_2 |_T,
\quad
\beta|_T := \norm{\mbf{b}}{L^{\infty}(T)},
\quad
\gamma|_T :=\norm{c}{L^{\infty}(T)},
\end{equation}
over each element $T\in \mathcal{T}$. We define the stabilisation
parameter
\begin{equation}
\sigma_{\ddd}:=C_{\sigma} \alpha r^2/\mbf{h},
\quad
\tilde{\sigma}_{\ddd}|_T := \max_{e\subset \partial T}\sigma_{\ddd}|_e.
\end{equation}
Now, for $w\in V_h^r$, we define the norms
\[
\begin{split}
\norm{w}{\db}
:=\Big(
\norm{\sqrt{c_0} w}{}^2
+
\frac{1}{2}
\big(\norm{\sqrt{|\mbf{b}\cdot\mbf{n}|} \lfloor w\rfloor }{\Gamma_{\dint}}^2
+
\norm{\sqrt{|\mbf{b}\cdot\mbf{n}|} w}{\Gamma_{\ddd}^-}^2
+
\norm{\sqrt{|\mbf{b}\cdot\mbf{n}|} w }{\Gamma_{\dn}^{+}}^2
\big)\Big)^{1/2},
\end{split}
\]
and
\[
\begin{split}
\ndg{w}
&:=\Big(
\norm{\sqrt{a}\nabla \mathcal{E}(w)}{}^2
+
\norm{\sqrt{\sigma_{\ddd}} \mathcal{E}(w)}{\Gamma_{\ddd}}^2
+
\norm{w}{\db}^2
+
s_h^{a,c}(w,w)
+
s_h^{b}(w,w)\Big)^{1/2}.
\end{split}
\]
We also define
the `streamline-diffusion' norm
\[
\nsdg{w}:=\big( \ndg{w}^2+\norm{\sqrt{\delta \mbf{\lambda}} (\mbf{b} \cdot \nabla_h w)}{}^2\big)^{1/2},
\]
where $\delta>0$, to be chosen precisely below, and
\begin{equation}\label{spara}
\mbf{\lambda} := \min \{\beta^{-1}, \tilde{\sigma}_{\ddd}^{-1} \}\mbf{h}, 
\end{equation}
We are now in a position to show the following inf-sup condition for the R-FEM method (\ref{rem}). In the proofs of the results in this
section, we are particularly interested in the dependence of the
resulting bounds on the mesh-P\'eclet number $Pe_h$, the mesh-size
$\mbf{h}$ and polynomial degree $r$. We aim, therefore, to track
constants and their dependence explicitly.
\begin{theorem}[Inf-Sup Condition]
\label{lem:inf-sup}
Let $a\in[L^\infty(\Omega)]^{d\times d}_{\text{sym}}$ satisfy
Assumption (\ref{diffusion_tensor}) and $\mbf{b}\in
W^{1,\infty}(\Omega)^d$ satisfy Assumption \eqref{convection_field}.
Assume that the mesh is such that each element face in the mesh is
either completely inflow or outflow or characteristic. Suppose also
that the penalisation parameters $\sigma_{a,c}, \sigma_{b,1}$ and
$\sigma_{b,2}$ are chosen large enough to satisfy (\ref{eq:simgas}), $\delta$ is chosen to satisfy (\ref{def-sigma})
and the boundary stabilisation constant $C_{\sigma}>0$ is
sufficiently large. Then, we have
\begin{equation}
\inf_{0\neq w_h\in V_h^r}\sup _{0\neq v_h\in V_h^r}
\frac{B(w_h,v_h)}{\ndg{w_h}_{s}\ndg{v_h}_s}
\ge
\Lambda,
\end{equation}
where $\Lambda > 0$ is independent of $\mbf{\lambda}$, $\mbf{h}$ and of the
mesh-P\'eclet number $Pe_h:=\beta \mbf{h}/\alpha$.
\end{theorem}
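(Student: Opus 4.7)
The plan follows the classical two-test-function approach for inf-sup stability of convection--diffusion discretisations, adapted to the R-FEM setting. First I would establish a coercivity-type bound of $B$ with respect to the ``static'' norm $\ndg{\cdot}$ by testing with $v_h=w_h$; then, to upgrade to the streamline-diffusion norm $\nsdg{\cdot}$, I would test with a multiple of the streamline derivative $\mbf{b}\cdot\nabla_h w_h$, which lies in $V_h^r$ by assumption \eqref{convection_field}; finally, a convex combination of the two test functions furnishes the inf-sup inequality after a norm-equivalence check.

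\textbf{Step 1.} Setting $v_h=w_h$ in \eqref{method}, the diffusion and reaction terms supply $\norm{\sqrt{a}\nabla\mathcal{E}(w_h)}{}^2$ and, via \eqref{assumption-cb}, a $\norm{\sqrt{c_0}w_h}{}^2$-like quantity after rewriting the convection term through the splitting
\[
\int_\Omega \mbf{b}\cdot\nabla_h w_h\,\mathcal{E}(w_h)\,\ud x
= \int_\Omega \mbf{b}\cdot\nabla_h w_h\,(\mathcal{E}(w_h)-w_h)\,\ud x + \int_\Omega \mbf{b}\cdot\nabla_h w_h\,w_h\,\ud x,
\]
and applying elementwise integration by parts together with $\mbf{b}\cdot\nabla w_h^2 = 2\mbf{b}\cdot\nabla w_h\,w_h$ on each $T\in\mathcal{T}$. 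The resulting face traces combine with the upwind terms of \eqref{method} to produce the contributions entering $\norm{w_h}{\db}^2$. The Dirichlet boundary cross-terms $\int_{\Gamma_{\ddd}} a\nabla\mathcal{E}(w_h)\cdot\mbf{n}\,\mathcal{E}(w_h)\,\ud s$ are controlled, half by $\norm{\sqrt{a}\nabla\mathcal{E}(w_h)}{}^2$ and half by $\norm{\sqrt{\sigma_{\ddd}}\mathcal{E}(w_h)}{\Gamma_{\ddd}}^2$, once $C_\sigma$ is large enough, via the standard trace-inverse inequality on $\tilde{\mathcal{T}}$. The remaining mismatch $\int \mbf{b}\cdot\nabla_h w_h\,(\mathcal{E}(w_h)-w_h)\,\ud x$ is absorbed into $s_h^{a,c}(w_h,w_h)+s_h^b(w_h,w_h)$ via Cauchy--Schwarz, Lemma~\ref{lemma_E1}, and an inverse estimate on $\mbf{b}\cdot\nabla_h w_h$, provided $\sigma_{a,c},\sigma_{b,1},\sigma_{b,2}$ are chosen large enough per \eqref{eq:simgas}. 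This delivers $B(w_h,w_h)\ge c_1\ndg{w_h}^2$ with $c_1>0$ independent of $\mbf{h}$ and of $Pe_h$.

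\textbf{Step 2.} Take $z_h:=\delta\mbf{\lambda}\,(\mbf{b}\cdot\nabla_h w_h)\in V_h^r$. The dominant contribution of $B(w_h,z_h)$ is $\int \mbf{b}\cdot\nabla_h w_h\,\mathcal{E}(z_h)\,\ud x$; writing $\mathcal{E}(z_h)=z_h+(\mathcal{E}(z_h)-z_h)$ produces the target $\norm{\sqrt{\delta\mbf{\lambda}}\,\mbf{b}\cdot\nabla_h w_h}{}^2$ plus a recovery perturbation. The definition \eqref{spara} of $\mbf{\lambda}$ as $\min\{\beta^{-1},\tilde{\sigma}_{\ddd}^{-1}\}\mbf{h}$ is tailored so that the inverse estimate bounding the $H^1$-norm and boundary traces of $\mathcal{E}(z_h)$ by $\mbf{h}^{-1}\norm{z_h}{}$ yields control of the diffusion, reaction, Dirichlet and stabilisation contributions of $z_h$ by $\norm{\sqrt{\delta\mbf{\lambda}}\,\mbf{b}\cdot\nabla_h w_h}{}\cdot\ndg{w_h}$, uniformly in $Pe_h$. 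A second appeal to Lemma~\ref{lemma_E1} handles $\mathcal{E}(z_h)-z_h$. Young's inequality, with $\delta$ small enough to meet \eqref{def-sigma}, yields
$B(w_h,z_h)\ge \tfrac12\norm{\sqrt{\delta\mbf{\lambda}}\,\mbf{b}\cdot\nabla_h w_h}{}^2 - C_2\,\ndg{w_h}^2.$

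\textbf{Step 3.} Testing with $v_h:=w_h+\theta z_h$ for $\theta:=c_1/(2C_2)$ and combining Steps 1--2 gives $B(w_h,v_h)\gtrsim\nsdg{w_h}^2$. A short inverse-estimate computation, again exploiting \eqref{spara}, confirms $\nsdg{v_h}\le C_3\nsdg{w_h}$, whence the inf-sup inequality follows with $\Lambda>0$ independent of $\mbf{h}$ and of $Pe_h$. The main obstacle is undoubtedly Step 2: the recovery perturbations $\mathcal{E}(z_h)-z_h$ and $\mathcal{E}(w_h)-w_h$ are features absent from the classical DG analysis, and it is delicate to verify that the specific scaling \eqref{spara} and the stabilisations \eqref{stabilisationa}--\eqref{stabilisationb} absorb them uniformly in $Pe_h$ while preserving the explicit dependence on the polynomial degree $r$ claimed in the statement.
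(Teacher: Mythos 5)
Your proposal is correct and follows essentially the same route as the paper: testing with $w_h$ to obtain control of $\ndg{w_h}^2$, then with $\delta\mbf{\lambda}(\mbf{b}\cdot\nabla_h w_h)$ to recover the streamline term, absorbing the recovery perturbations $\mathcal{E}(\cdot)-(\cdot)$ via Lemma~\ref{lemma_E1}, inverse estimates and the stabilisations, and finally verifying $\nsdg{v_h}\le C\nsdg{w_h}$. The difficulties you flag at the end (the scaling \eqref{spara} and the uniform-in-$Pe_h$ absorption of the recovery terms) are exactly the points the paper's proof spends its effort on.
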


\begin{proof}
As usual, the proof consists of two steps: 1) for each $w_h\in V_h^r$, we find a $v_h(w_h)\equiv v_h\in V_h^r$ such that
$
B(w_h, v_h) \geq C \nsdg{w_h}^2,
$
and, 2) we show that this $v_h$ satisfies the bound
$
\nsdg{v_h} \leq C \nsdg{w_h},
$
thereby inferring the result. 

To that end, fix $w_h\in V_h^r$ and set $v_h=w_h+\delta w_h^b$,
where we will use the shorthand $w_h^{b}:=\mbf{\lambda}
(\mbf{b}\cdot \nabla_h w_h)$ for brevity for some $\delta \in
\reals$ is to be chosen. Then, integration by parts and working as in the proof of \cite[Lemma 2.4]{HSS_hyp}, as well as making use of standard inverse estimates, give
\begin{equation}\label{eq:mid_one}
\begin{split}
B(w_h,w_h)
&\ge
\frac{1}{2}\ndg{w_h}^2
+
\int_{\Omega}\mbf{b}\cdot \nabla_h w_h\big(\mathcal{E}(w_h)-w_h\big)\ud x
+
\int_{\Omega} c\big( \mathcal{E}(w_h)^2 - w_h^2\big)\ud x\\
&\quad 
-
\int_{\Gamma_{\ddd}^-} \big(\mbf{b}\cdot\mbf{n} \big) w_h\big(\mathcal{E}(w_h)-w_h\big) \ud s
-
\sum_{T\in\mathcal{T}}\int_{\partial_-T\backslash\partial\Omega} \big(\mbf{b}\cdot\mbf{n} \big) \lfloor w_h\rfloor \big(\mathcal{E}(w_h)-w_h\big)\ud s\\
&=: \frac{1}{2}\ndg{w_h}^2 + \rom{1} + \rom{2} + \rom{3} + \rom{4}.
\end{split}
\end{equation}
Using Lemma \ref{lemma_E1},
Young's inequality and \eqref{spara}, we have
\[
\begin{split}
\rom{1}
&\le
\frac{1}{4}\norm{\sqrt{\delta\mbf{\lambda}}\mbf{b}\cdot\nabla_h w_h}{}^2
+
C(r)\norm{ \delta^{-1/2} \beta^{1/2}\jump{w_h}}{\Gamma_{\dint}}^2.
\end{split}
\]
and
\[
\begin{split}
\rom{2}
&=
\int_{\Omega} c\big( \mathcal{E}(w_h) - w_h\big)^2+2cw_h\big( \mathcal{E}(w_h) - w_h\big)\ud x
\\
& \le
C(r)\norm{\sqrt{\gamma\mbf{h}}\jump{w_h}}{\Gamma_{\dint}}^2 +\frac{1}{4}\norm{\sqrt{c_0}w_h}{}^2
+
C(r)\norm{\sqrt{\gamma/\underline{\gamma}_0}\sqrt{\gamma\mbf{h}}\jump{w_h}}{\Gamma_{\dint}}^2
\\
&\le
\frac{1}{4}\norm{\sqrt{c_0}w_h}{}^2 +
C(r)\norm{\big(1+\sqrt{\gamma/\underline{\gamma}_0}\big)\sqrt{\gamma\mbf{h}}\jump{w_h}}{\Gamma_{\dint}}^2,
\end{split}
\]
with $\underline{\gamma}_0$ a local min of $c_0$. Finally, again,
using Lemma \ref{lemma_E1} and Young's inequality, we also have
\[
\begin{split}
\rom{3} + \rom{4}
&\le
\frac{1}{8}\norm{\sqrt{|\mbf{b}\cdot\mbf{n}|} w_h}{\Gamma_{\ddd}^-}^2
+
\frac{1}{8}\norm{\sqrt{|\mbf{b}\cdot\mbf{n}|} \lfloor w_h\rfloor}{\Gamma_{\dint}}^2
+
C(r)\norm{\sqrt{\beta}\jump{w_h}}{\Gamma_{\dint}}^2.
\end{split}
\]
Substituting the above bounds into \eqref{eq:mid_one}, we arrive at
\begin{equation}\label{eq:mid_one_two}
\begin{split}
B(w_h,w_h) &\ge \frac{1}{2}\ndg{w_h}^2-\frac{1}{4}\norm{\sqrt{c_0}w_h}{}^2 -
C(r)\norm{\big(1+\sqrt{\gamma/\underline{\gamma}_0}\big)\sqrt{\gamma\mbf{h}}\jump{w_h}}{\Gamma_{\dint}}^2\\ &\quad -\frac{1}{8}\norm{\sqrt{|\mbf{b}\cdot\mbf{n}|} w_h}{\Gamma_{\ddd}^-}^2- \frac{1}{8}\norm{\sqrt{|\mbf{b}\cdot\mbf{n}|} \lfloor w_h\rfloor}{\Gamma_{\dint}}^2-\frac{1}{4}\norm{\sqrt{\delta\mbf{\lambda}}\mbf{b}\cdot\nabla_h w_h}{}^2\\
&\quad - C(r)\norm{(\sqrt{\beta}+\delta^{-1/2}\sqrt{\beta})\jump{w_h}}{\Gamma_{\dint}}^2.
\end{split}
\end{equation}
Working as before, we also have
\begin{equation}\label{eq:second_bound}
\begin{split}
B(w_h,\delta w_h^b) 
&\ge - \frac{1}{4} \ndg{w_h}
-
\norm{\sqrt{a}\nabla \mathcal{E}(\delta w_h^b)}{}^2\\
&\quad +
\norm{\sqrt{\delta \mbf{\lambda}} \mbf{b}\cdot \nabla_h w_h}{}^2
+
\int_{\Omega}\mbf{b}\cdot \nabla_h w_h\big(\mathcal{E}(\delta w_h^b) - \delta w_h^b\big)\ud x\\
&\quad
-
2\norm{\sqrt{\sigma_{\ddd}} \mathcal{E}(\delta w_h^b)}{\Gamma_{\ddd}}^2
-
8\norm{\sigma_{\ddd}^{-1/2} a\nabla \mathcal{E}(\delta w_h^b)}{\Gamma_{\ddd}}^2\\
&\quad
-
2\norm{\sqrt{|\mbf{b}\cdot\mbf{n}|} \mathcal{E}(\delta w_h^b)}{\Gamma_{\ddd}^-}^2
-
2\norm{\sqrt{|\mbf{b}\cdot\mbf{n}|} \mathcal{E}(\delta w_h^b)}{\Gamma_{\dint}}^2\\
&\quad
-
\norm{\sqrt{\sigma_{a,c}}\jump{\delta w_h^b}}{\Gamma_{\dint}}^2
-
\norm{\sqrt{\sigma_{b,1}}\jump{\delta w_h^b}}{\Gamma_{\dint}}^2
-
\norm{\sqrt{\sigma_{b,2}}\jump{\mbf{h}(\mbf{b}\cdot \nabla(\delta w_h^b)}}{\Gamma_{\dint}}^2.
\end{split}
\end{equation}
We further bound each term in (\ref{eq:second_bound}) not directly
appearing in the energy norm. We have
\begin{equation}\label{eq:mid1}
\begin{split}
\norm{\sqrt{a}\nabla \mathcal{E}(\delta w_h^b)}{}^2 
&\le
Cr^4 \norm{\sqrt{\alpha}\mbf{h}^{-1}\mathcal{E}(\delta w_h^b)}{}^2\\
&\le
Cr^4 \norm{ \sqrt{\delta} (\sqrt{ \delta \mbf{ \lambda}} \mbf{b} \cdot \nabla_h w_h)}{}^2
+
C(r)\norm{ \jump{\delta \mbf{h} (\mbf{b} \cdot \nabla_h w_h)}}{\Gamma_{\dint}}^2,
\end{split}
\end{equation}
using an inverse estimate
and Lemma \ref{lemma_E1}, respectively. Similarly,
\[
\begin{split}
\int_{\Omega}\mbf{b}\cdot \nabla_h w_h\big(\mathcal{E}(\delta w_h^b) - \delta w_h^b\big)\ud x
& \le
\frac{1}{4}\norm{\sqrt{\delta \mbf{\lambda}} \mbf{b}\cdot \nabla_h w_h}{}^2
+
C(r)\norm{ \jump{\sqrt{\delta} \mbf{h} (\mbf{b} \cdot \nabla_h w_h)}}{\Gamma_{\dint}}^2.
\end{split}
\]
Next, using the
stability of $\mathcal{E}$, we deduce
\[
\begin{split}
2\norm{\sqrt{\sigma_{\ddd}} \mathcal{E}(\delta w_h^b)}{\Gamma_{\ddd}}^2
&
\le Cr^4\norm{ \sqrt{\delta} (\sqrt{\delta \mbf{\lambda}}\mbf{b}\cdot \nabla_h w_h)}{}^2
+
C(r)\norm{ \jump{\delta \mbf{h} (\mbf{b} \cdot \nabla_h w_h)}}{\Gamma_{\dint}}^2,
\end{split}
\]
and, using inverse estimates along with the definition of $\sigma_{\ddd}$,
\[
\begin{split}
8\norm{\sigma_{\ddd}^{-1/2} a\nabla \mathcal{E}(\delta w_h^b)}{\Gamma_{\ddd}}^2&
\le Cr^4\norm{\sqrt{\alpha} \mbf{h}^{-1}\mathcal{E}(\delta w_h^b)}{}^2,
\end{split}
\]
which can be further estimated as in \eqref{eq:mid1}. We also have
\[
2 \norm{\sqrt{|\mbf{b}\cdot\mbf{n}|} \mathcal{E}(\delta w_h^b)}{\Gamma_{\ddd}^-\cup\Gamma_{\dint}}^2
\le 
Cr^2\norm{ \sqrt{\delta} (\sqrt{\delta \mbf{\lambda}} \mbf{b}\cdot\nabla_h w_h)}{}^2
+
C(r)\norm{ \jump{ \delta \mbf{h} (\mbf{b} \cdot \nabla_h w_h)}}{\Gamma_{\dint}}^2,
\]
and
\[
\norm{\sqrt{\sigma_{b,2}}\jump{\mbf{h}(\mbf{b}\cdot \nabla_h(\delta w_h^b)}}{\Gamma_{\dint}}^2
\le
Cr^6\norm{\sqrt{\sigma_{b,2}/\mbf{h}}\beta \delta w_h^b}{}^2
\le Cr^6\norm{\sqrt{\sigma_{b,2} \beta \delta} (\sqrt{\delta \mbf{\lambda}} \mbf{b}\cdot\nabla_h w_h)}{}^2.
\]
Combining now the above estimates, we arrive at the bound
\begin{equation}\label{eq:fourth_bound}
\begin{split}
B(w_h, v_h(w_h)) 
&\ge
\frac{1}{4}\ndg{w_h}^2
+
\frac{1}{2}\norm{\sqrt{\delta\mbf{h}}\mbf{b}\cdot\nabla_h w_h}{}^2\\
&\quad
-
C(r)\norm{(\sqrt{\beta}+\sqrt{\beta}\delta^{-1/2})\jump{w_h}}{\Gamma_{\dint}}^2
-
C(r)\norm{\big(1+\sqrt{\gamma/\underline{\gamma}_0}\big)\sqrt{\gamma\mbf{h}}\jump{w_h}}{\Gamma_{\dint}}^2\\
&\quad
-
Cr^4 \norm{\sqrt{\delta }\Big(1+r^{-1}+r\sqrt{\sigma_{b,2} \beta} \Big)(\sqrt{\delta \mbf{h}} \mbf{b}\cdot \nabla_h w_h)}{}^2
\\
&\quad
-
C(r) \norm{\Big(\sqrt{\sigma_{b,1}}+ 1+\delta ^{-1/2}\Big)\jump{\delta \mbf{h}(\mbf{b}\cdot \nabla_h w_h)}}{\Gamma_{\dint}}^2.
\end{split}
\end{equation}
Upon selecting a global constant $\delta>0$, small enough, we can have 
\[
Cr^4 \delta \Big(1+r^{-1}+r\sqrt{\sigma_{b,2} \beta} \Big)^2\le\frac{1}{4}
\qquad \text{and}\qquad
\delta^2 C(r) \Big(\sqrt{\sigma_{b,1}}+1+\delta^{-1/2}\Big)^2 \le \frac{\sigma_{b,2}}{8}.
\]
Now, upon selecting additionally penalty parameters large enough to satisfy
\begin{equation}\label{eq:simgas}
\begin{split}
\sigma_{b,1}\ge
8C(r)\beta(1+\delta^{-1/2})^2, \quad 
\sigma_{a,c}\ge 
8C(r) \gamma \big(1+\sqrt{\gamma/\underline{\gamma}_0}\big)^2 , \quad
\sigma_{b,2} > 0,
\end{split}
\end{equation}
we can set
\begin{equation}\label{def-sigma}
\delta := \min\{ \Big( 4Cr^4(1+r^{-1}+r\sqrt{\sigma_{b,2}\beta})^2\Big)^{-1}, \Big( -1+\sqrt{1+\sqrt{\sigma_{b,2}}(2\sqrt{C(r)\beta}+2^{-1/2})^{-1}})^2\Big)/{4} \}.
\end{equation}
which, in turn implies,
\begin{equation}\label{infsup_one}
B(w_h, v_h(w_h))
\ge
\frac{1}{8}\ndg{w_h}^2+\frac{1}{4}\norm{\sqrt{\delta\mbf{h}}(\mbf{b}\cdot\nabla_h w_h)}{}^2
\ge
\frac 1 8 \nsdg{w_h}^2,
\end{equation}
from \eqref{eq:fourth_bound} and the first step of the proof is complete.

For the second step, working as above, standard inverse estimates along with
and Lemma \ref{lemma_E1} imply
\[
\begin{split}
\ndg{\delta w_h^b}^2
&\le
C(r)\norm{ \jump{ \delta \mbf{h} (\mbf{b} \cdot \nabla_h w_h)}}{\Gamma_{\dint}}^2
+
Cr^4\norm{\sqrt{\delta } (\sqrt{\delta\mbf{\lambda}}\mbf{b}\cdot \nabla_h w_h )}{}^2
+
\norm{\sqrt{c_0 \delta \mbf{\lambda} } \sqrt{\delta\mbf{\lambda}}\mbf{b}\cdot \nabla_h w_h }{}^2\\
&\quad
+
\norm{\sqrt{\sigma_{a,c}} \jump{\delta w_h^b}}{\Gamma_{\dint}}^2
+
Cr^2\norm{\sqrt{\beta\delta}\sqrt{\delta\mbf{\lambda}}\mbf{b}\cdot\nabla_h w_h }{\Gamma_{\dint}}^2
+
\norm{\sqrt{\sigma_{b,1}}\jump{\delta w_h^b}}{\Gamma_{\dint}}^2\\
&\quad
+
Cr^6\norm{\sqrt{\sigma_{b,2}\delta}\beta\sqrt{\delta\mbf{\lambda}}\mbf{b}\cdot\nabla_h w_h}{}^2
\end{split}
\]
and
$ \norm{\sqrt{\delta\mbf{\lambda}}\mbf{b}\cdot\nabla_h \delta w_h^b}{}^2 
\le
Cr^4\norm{\delta \sqrt{\delta\mbf{\lambda}}\mbf{b}\cdot\nabla_h w_h}{}^2.
$
The above assumptions on $\delta$, $\sigma_{a,c}$, $\sigma_{b,1}$,
$\sigma_{b,2}$, finally imply
\begin{equation}
\label{infsup_two}
\begin{split}
\nsdg{v_h}
&\leq 
\nsdg{w_h} + \nsdg{\delta w_h^b} \le
C(r) \big(\ndg{ w_h}+\norm{\sqrt{\delta\mbf{h}}\mbf{b}\cdot\nabla_h w_h}{}\big)
=
C(r)\nsdg{w_h},
\end{split}
\end{equation}
thereby completing the proof of the second step also.\end{proof}

\begin{remark}
It is possible to modify the above proof by introducing a locally
variable $\delta$ aiming to achieve stronger streamline-diffusion
stabilization effect at the expense of mild assumptions on the local
variation of $\delta$ in the computational domain, see \cite{SINUM} for a
similar argument in a completely different context. In particular, using the elementary identity $
\jump{\delta w_h^b}= \jump{\delta}\av{w_h^b}+\av{\delta}
\jump{w_h^b} $ valid on every face $e\subset \Gamma_{\dint}$,
provided that $\norm{\jump{\delta}}{L^{\infty}(e)}/
\norm{\delta}{L^{\infty}(e)}\ll 1$, one can incorporate $\av{w_h^b}$
into the stabilization term
$\norm{\sqrt{\delta\mbf{\lambda}}\mbf{b}\cdot\nabla_h w_h}{}$.
\end{remark}

\begin{proposition}[Galerkin orthogonality]
\label{prop:gal-orthog}
Let $u\in V$ be the solution of (\ref{pde}),
(\ref{bcs}). Suppose also that $u_h \in V_h^r$ is the R-FEM solution
of (\ref{rem}) and set $e:=u-\mathcal{E}(u_h)$ for brevity. Then, for all $v_h \in V_h^r$ we have:
\begin{equation}\label{galerkin-orthognality}
\begin{split}
&\int_{\Omega} \Big( a\nabla e \cdot \nabla \mathcal{E}(v_h) +\mbf{b}\cdot\nabla_h \big(u- u_h \big)\mathcal{E}(v_h)+c e\mathcal{E}(v_h)\Big)\ud x\\
&\quad - \int_{\Gamma_{\ddd}} \!\Big( a \nabla e\cdot \mbf{n}\, \mathcal{E}(v_h)+ a \nabla \mathcal{E}(v_h)\cdot \mbf{n}\, e-\sigma_{\ddd} e \mathcal{E}(v_h)\Big)\ud s\\ 
&\quad -\int_{\Gamma_{\ddd}^-} \big(\mbf{b}\cdot\mbf{n} \big) \big(u-u_h\big) \mathcal{E}(v_h)\ud s
-\sum_{T\in\mathcal{T}}\int_{\partial_-T\backslash\partial\Omega} \big(\mbf{b}\cdot\mbf{n} \big) \lfloor u- u_h\rfloor \mathcal{E}(v_h)\ud s\\
&\quad+ s_h^{a,c}(u-u_h,v_h)+ s_h^b
(u-u_h,v_h) = 0.
\end{split}
\end{equation}
\end{proposition}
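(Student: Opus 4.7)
The plan is to establish the statement by deriving an analogous identity satisfied by the exact solution $u$ tested against $\mathcal{E}(v_h)$ (which is an admissible conforming test function, since $\mathcal{E}(V_h^r)\subset V\cap\tilde V_h^r$), and then subtracting the R-FEM equation $B(u_h,v_h)=\ell(v_h)$. Subtraction will produce exactly the expression in \eqref{galerkin-orthognality}, in view of the structural observations: the exact solution $u$ is single-valued across $\Gamma_{\dint}$, so $\lfloor u\rfloor=0$ and $\jump{u}=0$; in particular the stabilisation contributions $s_h^{a,c}(u,v_h)$ and $s_h^b(u,v_h)$ vanish, leaving only $-s_h^{a,c}(u_h,v_h)-s_h^b(u_h,v_h)$ on the $u_h$-side.

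First I would multiply the strong form \eqref{pde} by $\mathcal{E}(v_h)$ and integrate over $\Omega$, applying Green's identity to the diffusion term to obtain
\[
\int_\Omega a\nabla u\cdot\nabla\mathcal{E}(v_h)\,\ud x -\int_{\partial\Omega}(a\nabla u\cdot\mbf{n})\mathcal{E}(v_h)\,\ud s +\int_\Omega\mbf{b}\cdot\nabla u\,\mathcal{E}(v_h)\,\ud x+\int_\Omega cu\,\mathcal{E}(v_h)\,\ud x =\int_\Omega f\mathcal{E}(v_h)\,\ud x.
\]
Next I would decompose the boundary integral using $\partial\Omega=\Gamma_\ddd\cup\Gamma_\dn\cup\Gamma_1$: on $\Gamma_1$ the positive semi-definiteness of $a$ together with $\mbf{n}^T a\mbf{n}=0$ forces $a\mbf{n}=0$, so the flux vanishes there; on $\Gamma_\dn$ one substitutes the Neumann condition $a\nabla u\cdot\mbf{n}=g_\dn$, producing the Neumann contribution to $\ell(v_h)$. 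The remaining flux integral over $\Gamma_\ddd$ is precisely the consistency term $\int_{\Gamma_\ddd}(a\nabla u\cdot\mbf{n})\mathcal{E}(v_h)\,\ud s$ appearing in \eqref{galerkin-orthognality}.

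Then, using $u=g_\ddd$ on $\Gamma_\ddd$ and on $\Gamma_-$ (hence on $\Gamma_\ddd^-$), I would insert the Nitsche-type symmetriser $-\int_{\Gamma_\ddd}(a\nabla\mathcal{E}(v_h)\cdot\mbf{n})u\,\ud s$, the penalty $\int_{\Gamma_\ddd}\sigma_\ddd u\,\mathcal{E}(v_h)\,\ud s$, and the inflow convective term $-\int_{\Gamma_\ddd^-}(\mbf{b}\cdot\mbf{n})u\,\mathcal{E}(v_h)\,\ud s$; each is identically equal to the corresponding Dirichlet data contribution to $\ell(v_h)$, and the sum of all right-hand boundary pieces reconstructs $\ell(v_h)$ exactly. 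Since $u$ has no interior jumps, appending $-\sum_{T}\int_{\partial_-T\setminus\partial\Omega}(\mbf{b}\cdot\mbf{n})\lfloor u\rfloor\,\mathcal{E}(v_h)\,\ud s=0$ and $s_h^{a,c}(u,v_h)+s_h^b(u,v_h)=0$ does not change anything. The net result is an identity of the form ``$B$ applied to $u$'' equals $\ell(v_h)$, matching \eqref{method} term-by-term with $\mathcal{E}(u_h)$ replaced by $u$ and $u_h$ by $u$.

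Subtracting $B(u_h,v_h)=\ell(v_h)$ from this identity produces \eqref{galerkin-orthognality}. The only delicate step is the careful bookkeeping of boundary contributions (especially confirming that $a\nabla u\cdot\mbf{n}$ vanishes on $\Gamma_1$ and that every Dirichlet-data term inserted on the $u$-side cancels precisely against the same term in $\ell$); nothing deeper than integration by parts, the boundary conditions \eqref{bcs}, and the continuity of $u$ across internal faces is needed.
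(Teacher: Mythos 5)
Your proposal is correct and follows essentially the same route as the paper's proof: derive the consistency identity for $u$ tested against $\mathcal{E}(v_h)$ (using $a\mbf{n}=\mbf{0}$ on $\Gamma_1$ from the semi-definiteness, the boundary data on $\Gamma_{\ddd}$, $\Gamma_{\dn}$, $\Gamma_{\ddd}^-$, and the vanishing of $\lfloor u\rfloor$, $s_h^{a,c}(u,\cdot)$ and $s_h^b(u,\cdot)$ by continuity of $u$), then subtract \eqref{rem}. You merely spell out the bookkeeping that the paper compresses into ``the consistency of the method yields \eqref{pde_weak}.''
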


\begin{proof}
To begin, from \eqref{pde}, \eqref{bcs}, the consistency of the method yields
\begin{equation}
\label{pde_weak}
\begin{split}
\int_{\Omega} a\nabla u\cdot\nabla \mathcal{E}(v_h)
+
\mbf{b}\cdot \nabla u\,\mathcal{E}(v_h)
+&
c u\, \mathcal{E}(v_h) \ud x
-
\int_{ \Gamma_{\ddd}}a\nabla u\cdot\mbf{n}\,\mathcal{E}(v_h)\ud s \\
&=
\int_{\Omega}f \mathcal{E}(v_h)\ud x
+
\int_{\Gamma_{\dn}}g_{\dn}^{}\mathcal{E}(v_h)\ud s
\quad \forall v_h\in V_h^r.
\end{split}
\end{equation}
Noting that $\mbf{n}^Ta \mbf{n}=0$ implies $\mbf{n}^Ta=\mbf{0}^T$
for $a$ satisfying \eqref{nonneg}. Moreover, the regularity of $u$ \cite{OR} allows to also conclude
\[
s_h^{a,c}(u,v_h) = s_h^b (u,v_h) = 0 \text{ for } v_h\in V_h^r,
\]
Subtracting now \eqref{rem} from
\eqref{pde_weak} already yields the result.
\end{proof}

\begin{lemma}
\label{lem:splitty}
Suppose the assumptions of Theorem \ref{lem:inf-sup} hold and let
$u$ and $u_h$ satisfy the assumptions of Proposition
\ref{prop:gal-orthog}. In addition, let $\Pi:L^2(\Omega)\to V_h^r$
denote the orthogonal $L^2$-projection operator onto the polytopic
finite element space $V_h^r$. Upon considering the splitting
$
u - u_h = (u - \Pi u) - (u_h - \Pi u) =:\eta - \xi ,
$
we have
\begin{equation}
\label{bilinear}
\begin{split}
|B(\xi, v_h)|
&\leq
F(\eta)
\nsdg{v_h},
\end{split}
\end{equation}
where
\[
\begin{split}
F(\eta)^2
&= 
2\norm{\sqrt{a}\nabla \big(u-\mathcal{E}(\Pi u)\big)}{}^2
+
8C(r)/\sigma_{b,1} \norm{\sqrt{\mbf{h}} \mbf{b} \cdot \nabla_h \eta}{}^2
+
8C(r)\beta /\sigma_{b,1} \norm{\sqrt{|\mbf{b}\cdot\mbf{n}|}\eta }{\Gamma_{\ddd}^-}^2
\\ 
&\qquad
+
8C(r)\beta/ \sigma_{b,1} \norm{\sqrt{|\mbf{b}\cdot\mbf{n}|} \lfloor \eta \rfloor}{\Gamma_{\dint}}^2
+
2\norm{\sqrt{|\mbf{b}\cdot\mbf{n}|}\eta}{\Gamma_{\dn}^+}^2
+
2 \norm{\sqrt{|\mbf{b}\cdot\mbf{n}|}\eta^-}{\Gamma_{\dint}}^2 \\ 
&\qquad
+
(2\norm{\mbf{b}}{W^{1,\infty}(\Omega)}^2/\gamma^2_0)  \norm{\sqrt{c_0}\eta}{}^2
+
\big(1+2\gamma^2/\gamma_0+8\gamma^2 C(r)\mbf{h}/ \sigma_{b,1} \big)\norm{u - \mathcal{E}(\Pi u)}{}^2
\\
&\qquad
+
2\norm{a \nabla \big( u-\mathcal{E}(\Pi u)\big)\cdot \mbf{n}  \sigma_{\ddd}^{-1/2}}{\Gamma_{\ddd}}^2
+
2C(r) \norm{\sqrt{\sigma_{\ddd}}\big(u-\mathcal{E}(\Pi u)\big)}{\Gamma_{\ddd}}^2 
\\
&\qquad
+
2 s_h^{a,c}(\eta,\eta)+ 2s_h^b(\eta,\eta) .
\end{split}
\]
\end{lemma}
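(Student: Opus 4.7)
The starting point is to recast the identity \eqref{galerkin-orthognality} as
\[
B(u-u_h,v_h) = 0 \qquad \forall v_h \in V_h^r,
\]
where $B$ is extended to $V+V_h^r$ by setting $\mathcal{E}(u)=u$ on the conforming component (recall $u\in V$ has no inter-element jumps, so $s_h^{a,c}(u,v_h)=s_h^b(u,v_h)=0$, exactly as in the proof of Proposition \ref{prop:gal-orthog}). Combining this with the splitting $u-u_h = \eta - \xi$ and the bilinearity of $B$, one obtains the key identity
\[
B(\xi,v_h) = B(\eta,v_h),
\]
which reduces the lemma to bounding $|B(\eta,v_h)|$ by $F(\eta)\,\nsdg{v_h}$.

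\textbf{Term-by-term estimation of $B(\eta,v_h)$.} I would expand $B(\eta,v_h)$ using \eqref{method}, note $\mathcal{E}(\eta)=u-\mathcal{E}(\Pi u)$, and apply Cauchy-Schwarz to each summand so that the second factor matches a building block of $\nsdg{v_h}$ (an $\|\sqrt{a}\nabla\mathcal{E}(v_h)\|$, a $\|\sqrt{\sigma_\ddd}\mathcal{E}(v_h)\|_{\Gamma_\ddd}$, a $\|\cdot\|_\db$-trace term, or $s_h^{a,c}(v_h,v_h)^{1/2}$, $s_h^b(v_h,v_h)^{1/2}$), while the first factor contributes the corresponding summand of $F(\eta)$. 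The diffusion volume term produces $\|\sqrt{a}\nabla(u-\mathcal{E}(\Pi u))\|$; the symmetric Nitsche pair on $\Gamma_\ddd$ generates the $\sigma_\ddd^{-1/2}$- and $\sigma_\ddd^{1/2}$-weighted traces of $u-\mathcal{E}(\Pi u)$; the reaction term provides the $\|u-\mathcal{E}(\Pi u)\|$ factor (one instance paired against $\|\sqrt{c_0}v_h\|$ via the $\gamma^2/\gamma_0$ weighting); and the two stabilisation pairings are closed with Assumption \ref{stabilisation_ass2}, producing the $2s_h^{a,c}(\eta,\eta)+2s_h^b(\eta,\eta)$ piece.

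\textbf{First-order contributions.} The delicate terms are the convection volume integral $\int_\Omega(\mbf{b}\cdot\nabla_h\eta)\mathcal{E}(v_h)\,\ud x$ and the upwind trace integrals. For the volume piece I would decompose $\mathcal{E}(v_h)=v_h+(\mathcal{E}(v_h)-v_h)$. An element-wise integration by parts on the $v_h$-portion, combined with the positivity hypothesis \eqref{assumption-cb}, accounts for the $(\|\mbf{b}\|_{W^{1,\infty}(\Omega)}^2/\gamma_0^2)\|\sqrt{c_0}\eta\|^2$ term in $F(\eta)$; the remainder is controlled by Lemma \ref{lemma_E1}, which converts $\mathcal{E}(v_h)-v_h$ into $\|\mbf{h}^{1/2}\jump{v_h}\|_{\Gamma_\dint}$ and is then absorbed by $\sqrt{\sigma_{b,1}}\|\jump{v_h}\|_{\Gamma_\dint}$ from $s_h^b(v_h,v_h)^{1/2}$, producing the $\sigma_{b,1}^{-1}$-weighting of $\|\sqrt{\mbf{h}}\mbf{b}\cdot\nabla_h\eta\|$ inside $F(\eta)$. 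The inflow-boundary and interior signed-jump convection contributions are handled analogously: on each face $\mathcal{E}(v_h)$ is split into $v_h+(\mathcal{E}(v_h)-v_h)$, the first piece pairs against the $\|\cdot\|_\db$-components of $\nsdg{v_h}$ (generating the $\|\sqrt{|\mbf{b}\cdot\mbf{n}|}\eta\|_{\Gamma_\ddd^-}$, $\|\sqrt{|\mbf{b}\cdot\mbf{n}|}\lfloor\eta\rfloor\|_{\Gamma_\dint}$, $\|\sqrt{|\mbf{b}\cdot\mbf{n}|}\eta\|_{\Gamma_\dn^+}$ and $\|\sqrt{|\mbf{b}\cdot\mbf{n}|}\eta^-\|_{\Gamma_\dint}$ summands of $F(\eta)$), while the second is absorbed by $s_h^b$.

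\textbf{Expected main obstacle.} The principal difficulty is the careful bookkeeping required so that every occurrence of $\mathcal{E}(v_h)$ in $B(\eta,v_h)$ is consistently split and absorbed by a piece of $\nsdg{v_h}$, while simultaneously extracting the exact constants appearing in $F(\eta)$ — particularly the $8C(r)/\sigma_{b,1}$ factors, which only emerge if the convection-remainder pieces are matched against $s_h^b$ (rather than against the $\|\cdot\|_\db$-trace part) in the right way. Getting this balance right is essential because, once $\sigma_{b,1}$ is chosen according to \eqref{eq:simgas}, these $\sigma_{b,1}^{-1}$-factors are precisely what allow the subsequent a priori bound to be independent of the mesh-P\'eclet number $Pe_h$.
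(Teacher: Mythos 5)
Your overall strategy coincides with the paper's: rewrite the Galerkin orthogonality \eqref{galerkin-orthognality} so that $B(\xi,v_h)$ equals the residual in $\eta$ (the paper's identity \eqref{galerkin-orthognality2}, which is exactly your $B(\eta,v_h)$ under the convention $\mathcal{E}(u)=u$, so that $\mathcal{E}(\eta)=u-\mathcal{E}(\Pi u)$), then estimate term by term with Cauchy--Schwarz, splitting $\mathcal{E}(v_h)=v_h+\big(\mathcal{E}(v_h)-v_h\big)$ in the first-order terms, converting $\mathcal{E}(v_h)-v_h$ into $\norm{\mbf{h}^{1/2}\jump{v_h}}{\Gamma_{\dint}}$ via Lemma \ref{lemma_E1} and absorbing it into $s_h^b(v_h,v_h)^{1/2}$, which is indeed where the $\sigma_{b,1}^{-1}$ weights in $F(\eta)$ come from. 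The handling of the Nitsche terms on $\Gamma_{\ddd}$, of the reaction term, and of the two stabilisations also matches the paper.

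There is, however, one genuine omission in your treatment of the convection volume term. The elementwise integration by parts of $\int_\Omega(\mbf{b}\cdot\nabla_h\eta)\,v_h\,\ud x$ produces \emph{two} volume terms, namely $-\int_\Omega(\nabla\cdot\mbf{b})\,v_h\,\eta\,\ud x$ \emph{and} $-\int_\Omega(\mbf{b}\cdot\nabla_h v_h)\,\eta\,\ud x$, in addition to the face terms; you account only for the first (via the positivity hypothesis). The second is the dangerous one: $\nsdg{v_h}$ controls $\mbf{b}\cdot\nabla_h v_h$ only with the weight $\sqrt{\delta\mbf{\lambda}}\sim\sqrt{\mbf{h}}$, so a direct Cauchy--Schwarz would leave a factor of order $\mbf{h}^{-1/2}\norm{\eta}{}$ that does not appear in the stated $F(\eta)$ and would reintroduce a mesh-P\'eclet dependence into Theorem \ref{Theorem:error}. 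The paper eliminates this term exactly: assumption \eqref{convection_field} guarantees $\mbf{b}\cdot\nabla_h v_h\in V_h^r$, and since $\eta=u-\Pi u$ is $L^2$-orthogonal to $V_h^r$, the integral vanishes identically. This is the entire reason the lemma insists on the $L^2$-projection $\Pi$ (rather than an arbitrary interpolant) and on assumption \eqref{convection_field}; your sketch needs to state this step explicitly for the bound to hold with the given $F(\eta)$.
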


\begin{proof} Galerkin orthogonality
\eqref{galerkin-orthognality}, for any $v_h\in V_h^r$, gives
\begin{equation}\label{galerkin-orthognality2}
\begin{split}
&\int_{\Omega} \Big( a\nabla \big(u-\mathcal{E}(\Pi u)\big) \cdot \nabla \mathcal{E}(v_h)
+
\mbf{b}\cdot\nabla_h \eta \mathcal{E}(v_h)
+
c \big(u - \mathcal{E}(\Pi u)\big)\mathcal{E}(v_h)\Big)\ud x
\\
&\quad
-
\int_{\Gamma_{\ddd}} \!\Big( a \nabla \big( u-\mathcal{E}(\Pi u)\big)\cdot \mbf{n}\, \mathcal{E}(v_h)
+
a \nabla \mathcal{E}(v_h)\cdot \mbf{n}\, \big(u-\mathcal{E}(\Pi u)\big)
-
\sigma_{\ddd} \big(u-\mathcal{E}(\Pi u)\big) \mathcal{E}(v_h)\Big)\ud s
\\ 
&\quad
-
\int_{\Gamma_{\ddd}^-} \big(\mbf{b}\cdot\mbf{n} \big) \eta \mathcal{E}(v_h)\ud s
-
\sum_{T\in\mathcal{T}}\int_{\partial_-T\backslash\partial\Omega} \big(\mbf{b}\cdot\mbf{n} \big) \lfloor \eta\rfloor \mathcal{E}(v_h)\ud s
+
s_h^{a,c}(\eta,v_h)
+
s_h^b(\eta,v_h) \\ 
&=
\int_{\Omega} \Big( a\nabla \mathcal{E}(\xi) \cdot \nabla \mathcal{E}(v_h)
+
\mbf{b}\cdot\nabla_h \xi \mathcal{E}(v_h)
+
c \mathcal{E}(\xi)\mathcal{E}(v_h)\Big)\ud x
\\
&\quad
-
\int_{\Gamma_{\ddd}} \!\Big( a \nabla \mathcal{E}(\xi) \cdot \mbf{n}\, \mathcal{E}(v_h)
+
a \nabla \mathcal{E}(v_h)\cdot \mbf{n}\, \mathcal{E}(\xi)
-
\sigma_{\ddd} \mathcal{E}(\xi) \mathcal{E}(v_h)\Big)\ud s
\\ 
&\quad
-
\int_{\Gamma_{\ddd}^-} \big(\mbf{b}\cdot\mbf{n} \big) \xi \mathcal{E}(v_h)\ud s
-
\sum_{T\in\mathcal{T}}\int_{\partial_-T\backslash\partial\Omega} \big(\mbf{b}\cdot\mbf{n} \big) \lfloor \xi \rfloor \mathcal{E}(v_h)\ud s
+
s_h^{a,c}(\xi,v_h)
+
s_h^b(\xi,v_h) 
\end{split}
\end{equation}
Now examining the terms appearing on the left hand side of
\eqref{galerkin-orthognality2} involving $\mbf{b}$ we see
that
\begin{equation}
\label{galerkin-orthognality-split}
\begin{split}
\int_{\Omega} \mbf{b}\cdot\nabla_h \eta &\mathcal{E}(v_h) \ud x
-
\int_{\Gamma_{\ddd}^-} \big(\mbf{b}\cdot\mbf{n} \big) \eta \mathcal{E}(v_h)\ud s
-
\sum_{T\in\mathcal{T}}\int_{\partial_-T\backslash\partial\Omega}
\big(\mbf{b}\cdot\mbf{n} \big) \lfloor \eta \rfloor
\mathcal{E}(v_h)\ud s\\
&=
\int_{\Omega} \mbf{b}\cdot\nabla_h \eta\big(\mathcal{E}(v_h)- v_h\big) \ud x
-
\int_{\Gamma_{\ddd}^-} \big(\mbf{b}\cdot\mbf{n} \big) \eta \big(\mathcal{E}(v_h)-v_h\big)\ud s \\
&\quad
-
\sum_{T\in\mathcal{T}}\int_{\partial_-T\backslash\partial\Omega} \big(\mbf{b}\cdot\mbf{n} \big) \lfloor \eta \rfloor \big(\mathcal{E}(v_h)-v_h\big)^+\ud s
-
\int_{\Omega} \Big( \mbf{b}\cdot\nabla_h v_h 
+
(\nabla \cdot \mbf{b}) v_h \Big) \eta \ud x 
\\
&\quad
+
\int_{\Gamma_{\dn}^+} \big(\mbf{b}\cdot\mbf{n} \big) \eta v_h\ud s
+
\sum_{T\in\mathcal{T}}\int_{\partial_-T\backslash\partial\Omega} \big(\mbf{b}\cdot\mbf{n} \big) \lfloor v_h \rfloor \eta^-\ud s.
\end{split}
\end{equation}
Recalling that $\mbf b$ satisfies assumption \eqref{convection_field},
by using the orthogonality of $\Pi$, we immediately have
$\int_{\Omega} \mbf{b}\cdot\nabla_h v_h \eta \ud x =0.
$
Using Lemma \ref{lemma_E1} and the Cauchy-Schwarz inequality, we have from \eqref{galerkin-orthognality-split}
\begin{equation}
\label{relation1}
\begin{split}
\int_{\Omega} \mbf{b}\cdot\nabla_h &\eta \mathcal{E}(v_h) \ud x 
-
\int_{\Gamma_{\ddd}^-} \big(\mbf{b}\cdot\mbf{n} \big) \eta \mathcal{E}(v_h)\ud s
-
\sum_{T\in\mathcal{T}}\int_{\partial_-T\backslash\partial\Omega} \big(\mbf{b}\cdot\mbf{n} \big) \lfloor \eta \rfloor \mathcal{E}(v_h)\ud s\\
&\leq
C(r) \norm{\sqrt{\mbf{h}} \mbf{b} \cdot \nabla_h \eta}{} \norm{\jump{v_h}}{\Gamma_{\dint}}
+
C(r) \sqrt{\beta} \norm{\sqrt{|\mbf{b}\cdot\mbf{n}|}\eta }{\Gamma_{\ddd}^-} \norm{\jump{v_h}}{\Gamma_{\dint}} \\
&\qquad
+
C(r)\sqrt{\beta} \norm{\sqrt{|\mbf{b}\cdot\mbf{n}|} \lfloor \eta \rfloor}{\Gamma_{\dint}} \norm{\jump{v_h}}{\Gamma_{\dint}}
+ \norm{\mbf{b}}{W^{1,\infty}(\Omega)} \gamma_0^{-1} \norm{\sqrt{c_0}v_h}{}\norm{\sqrt{c_0}\eta}{}
\\
&\qquad 
+
\norm{\sqrt{|\mbf{b}\cdot\mbf{n}|}\eta}{\Gamma_{\dn}^+}
\norm{\sqrt{|\mbf{b}\cdot\mbf{n}|}v_h}{\Gamma_{\dn}^+}
+
\norm{\sqrt{|\mbf{b}\cdot\mbf{n}|}\eta^-}{\Gamma_{\dint}}
\norm{\sqrt{|\mbf{b}\cdot\mbf{n}|}\lfloor v_h \rfloor}{\Gamma_{\dint}}.
\end{split}
\end{equation}
Now making use of analogous arguments for the reaction term, the
triangle inequality and Lemma \ref{lemma_E1}, we have
\begin{equation}\label{relation2}
\begin{split}
\int_{\Omega} c \big(u - \mathcal{E}(\Pi u)\big)\mathcal{E}(v_h) \ud x 
&\leq
\gamma \norm{u - \mathcal{E}(\Pi u)}{}\Big( \norm{v_h}{} + \norm{\mathcal{E}(v_h) - v_h}{}\Big)
\\
&\kern-1cm\leq
\gamma\gamma_0^{-1/2} \norm{u - \mathcal{E}(\Pi u)}{} \norm{\sqrt{c_0}v_h}{}
+
\gamma C(r)\mbf{h}^{1/2} \norm{u - \mathcal{E}(\Pi u)}{} \norm{\jump{v_h}}{\Gamma_{\dint}}.
\end{split}
\end{equation}
As for the Dirichlet boundary, assumption \eqref{diffusion_tensor} together with an
inverse inequality, result to
\begin{equation}
\label{relation3}
\begin{split}
\int_{\Gamma_{\ddd}} a \nabla \mathcal{E}(v_h)\cdot \mbf{n}\, \big(u-\mathcal{E}(\Pi u)\big)\ud s
&\leq
\norm{\sigma_{\ddd}^{-1/2}a \nabla \mathcal{E}(v_h)\cdot \mbf{n}}{\Gamma_{\ddd}} \norm{\sigma_{\ddd}^{1/2}\big(u-\mathcal{E}(\Pi u)\big)}{\Gamma_{\ddd}}
\\
&\leq
C(r) \norm{\sigma_{\ddd}^{1/2}\big(u-\mathcal{E}(\Pi u)\big)}{\Gamma_{\ddd}}
\norm{\sqrt{a}\nabla \mathcal{E}(v_h)}{}
.
\end{split}
\end{equation}
The result then follows by combining
(\ref{galerkin-orthognality2}), \eqref{galerkin-orthognality-split},
\eqref{relation1}, \eqref{relation2}, \eqref{relation3}, thereby concluding
the proof.
\end{proof}

\begin{theorem}\label{Theorem:error}
Suppose that the assumptions of Theorem \ref{lem:inf-sup} hold and let
$u$ and $u_h$ satisfy the assumptions of Proposition
\ref{prop:gal-orthog}. Suppose further that $u\in
\prod_{T\in\mathcal{T}} H^k(T)$, for some $k\ge 2$. Then, we have
the following a-priori error bound:
\begin{equation}\label{finall_error_bound}
\begin{split}
&\norm{\sqrt{a}\nabla e}{}^2+\norm{\sqrt{\sigma_{\ddd}} e}{\Gamma_{\ddd}}^2 + \norm{u-u_h}{\db}^2
+ \norm{\sqrt{\delta \mbf{h}} (\mbf{b} \cdot \nabla_h (u-u_h))}{}^2 \\ 
&+ s_h^{a,c}(u-u_h,u-u_h)+s_h^{b}(u-u_h,u-u_h) 
\leq C \sum_{T\in {\mathcal{T}}} \Big( 
\mathcal{D}_T + \mathcal{C}_T
\Big) \mbf{h}^{2l-2}|u|_{l,T}^2,
\end{split}
\end{equation}
where 
\begin{equation}\label{error_diffusion}
\mathcal{D}_T = \alpha + \alpha^2 \mbf{h}^{-1} \sigma_{\ddd}^{-1} +h\sigma_{\ddd} ,
\end{equation}
and
\begin{equation}\label{error_convection}
\begin{split}
\mathcal{C}_T &= \beta^2 \mbf{h}/\sigma_{b,1} +\beta \mbf{h}
+ (\gamma^2/\gamma_0) \mbf{h}^2 + ( \norm{\mbf{b}}{W^{1,\infty}(\Omega)}^2\norm{c_0}{L^\infty(\Omega)} /\gamma^2_0) \mbf{h}^2 
+ \norm{c_0}{L^\infty(\Omega)} \mbf{h}^2  \\
&\qquad
+\gamma^2 C(r)\mbf{h}^3/ \sigma_{b,1} + \delta \beta^2 \mbf{h}
+ \sigma_{a,c} \mbf{h} +\sigma_{b,1} \mbf{h} 
+ \sigma_{b,2} \beta^2 \mbf{h},
\end{split}
\end{equation}
for $l=\min\{k,r+1\}$, with $C$ positive constant, independent of $u$ and of $\mbf{h}$.
\end{theorem}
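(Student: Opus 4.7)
The plan is to combine the three ingredients already assembled: the inf-sup condition of Theorem~\ref{lem:inf-sup}, the Galerkin orthogonality of Proposition~\ref{prop:gal-orthog} as packaged in the bilinear-form estimate of Lemma~\ref{lem:splitty}, and standard approximation results for the orthogonal $L^2$-projection $\Pi:L^2(\Omega)\to V_h^r$. As in Lemma~\ref{lem:splitty} I would use the splitting $u-u_h=\eta-\xi$ with $\eta:=u-\Pi u$ and $\xi:=u_h-\Pi u\in V_h^r$. Applying the inf-sup condition to $\xi$ and then Lemma~\ref{lem:splitty} gives
\[
\Lambda\,\nsdg{\xi}\le \sup_{0\neq v_h\in V_h^r}\frac{B(\xi,v_h)}{\nsdg{v_h}}\le F(\eta),
\]
so the whole task reduces to bounding $F(\eta)^2$ by the right-hand side of \eqref{finall_error_bound}.

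Next, I would bound each term appearing in $F(\eta)$ termwise using standard anisotropic approximation estimates for $\Pi$ over the polytopic elements $T\in\mathcal{T}$, namely $\snorm{u-\Pi u}{s,T}\le C\,\mbf{h}^{l-s}\snorm{u}{l,T}$ for $0\le s\le l$ and $l=\min\{k,r+1\}$, together with the scaled trace inequality $\norm{w}{\partial T}^2\lesssim \mbf{h}^{-1}\norm{w}{T}^2+\mbf{h}\norm{\nabla w}{T}^2$ for the facet norms, and Lemma~\ref{lemma_E1} applied to $\Pi u$ (noting that $\jump{\Pi u}=\jump{\Pi u-u}$ since $u$ is continuous) to bound the ``recovery defect'' terms $\norm{u-\mathcal{E}(\Pi u)}{}$ and $\norm{u-\mathcal{E}(\Pi u)}{\Gamma_{\ddd}}$. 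Tracking the coefficient $\alpha,\beta,\gamma,\sigma_{\ddd},\sigma_{a,c},\sigma_{b,1},\sigma_{b,2}$ in front of each term of $F(\eta)$ yields exactly the combinations appearing in $\mathcal{D}_T$ (collecting the terms coming from the elliptic block, i.e.\ $\alpha+\alpha^2\mbf{h}^{-1}\sigma_{\ddd}^{-1}+\mbf{h}\sigma_{\ddd}$) and $\mathcal{C}_T$ (collecting the convection--reaction--stabilization terms). In particular the streamline term $\norm{\sqrt{\mbf{h}}\,\mbf{b}\cdot\nabla_h\eta}{}^2$ produces $\beta^2\mbf{h}^{2l-1}$ per element, the boundary terms produce $\beta\mbf{h}^{2l-1}$ via the trace inequality, and the stabilisations $s_h^{a,c}(\eta,\eta)$ and $s_h^b(\eta,\eta)$ produce the $\sigma_{a,c}\mbf{h}$, $\sigma_{b,1}\mbf{h}$ and $\sigma_{b,2}\beta^2\mbf{h}$ contributions after using Lemma~\ref{lemma_E1} again on jumps of $\Pi u$.

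Having shown $\nsdg{\xi}^2\le C\sum_T(\mathcal{D}_T+\mathcal{C}_T)\mbf{h}^{2l-2}\snorm{u}{l,T}^2$, I would finally recover the left-hand side of \eqref{finall_error_bound} by a triangle inequality. For the terms involving $e=u-\mathcal{E}(u_h)=(u-\mathcal{E}(\Pi u))-\mathcal{E}(\xi)$, I would use the boundedness of the diffusion and boundary parts of $\ndg{\xi}$ by $\nsdg{\xi}$ together with the $a$- and $\sigma_{\ddd}$-weighted approximation estimates on $u-\mathcal{E}(\Pi u)$; for the $\norm{\cdot}{\db}$, streamline, and stabilisation terms, which involve $u-u_h=\eta-\xi$ directly (not $\mathcal{E}(u_h)$), I would bound the $\xi$-part by $\nsdg{\xi}$ and the $\eta$-part by the same approximation--trace arsenal applied to $\eta$, observing that the resulting orders match those already in $\mathcal{C}_T$.

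The main obstacle I anticipate is purely bookkeeping: matching the coefficients and powers of $\mbf{h}$, $\alpha$, $\beta$, $\gamma$, $\sigma_{\ddd}$, $\sigma_{a,c}$, $\sigma_{b,j}$ that come out of the termwise estimates to the precise form of $\mathcal{D}_T$ and $\mathcal{C}_T$. In particular the $\alpha^2\mbf{h}^{-1}\sigma_{\ddd}^{-1}$ contribution requires handling the $\norm{a\nabla(u-\mathcal{E}(\Pi u))\cdot\mbf{n}\,\sigma_{\ddd}^{-1/2}}{\Gamma_{\ddd}}$ term via a trace inequality applied to $\nabla(u-\mathcal{E}(\Pi u))$, and the factor $\mbf{h}\sigma_{\ddd}$ comes from the $\sqrt{\sigma_{\ddd}}$-weighted boundary approximation error; no new analytic idea is required beyond carefully combining the trace, inverse and Lemma~\ref{lemma_E1} estimates.
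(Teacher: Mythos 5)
Your proposal is correct and follows essentially the same route as the paper: split $u-u_h=\eta-\xi$, control $\nsdg{\xi}$ via the inf-sup condition of Theorem~\ref{lem:inf-sup} combined with Lemma~\ref{lem:splitty}, bound $F(\eta)$ and the remaining $\eta$-terms using the approximation properties of $\Pi$ together with Lemma~\ref{lemma_E1}, and conclude by the triangle inequality. The only (immaterial) difference is the order of operations --- the paper applies the triangle inequality to the left-hand side first and then estimates the two resulting blocks, whereas you estimate $\nsdg{\xi}$ first and assemble afterwards.
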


\begin{proof}
Using the notation of the proof of Lemma
\ref{lem:splitty}, triangle inequality implies
\begin{equation}\label{error_split}
\begin{split}
&\norm{\sqrt{a}\nabla e}{}^2+\norm{\sqrt{c_0} (u-u_h)}{}^2+\norm{\sqrt{\sigma_{\ddd}} e}{\Gamma_{\ddd}}^2
+\frac{1}{2}\norm{\sqrt{|\mbf{b}\cdot\mbf{n}|} (u-u_h)}{\Gamma_{\ddd}^-}^2\\
&+\frac{1}{2}\norm{\sqrt{|\mbf{b}\cdot\mbf{n}|} \lfloor u-u_h\rfloor }{\Gamma_{\dint}}^2+\frac{1}{2}\norm{\sqrt{|\mbf{b}\cdot\mbf{n}|} (u-u_h) }{\Gamma_{\dn}^{+}}^2 
+ \norm{\sqrt{\delta \mbf{h}} (\mbf{b} \cdot \nabla_h (u-u_h))}{}^2 \\ 
& + s_h^{a,c}(u-u_h,u-u_h)+s_h^{b}(u-u_h,u-u_h) \\ 
&\leq \Big( 2 \norm{\sqrt{a}\nabla ( u - \mathcal{E}(\Pi u))}{}^2
+
2 \norm{\sqrt{c_0} \eta}{}^2+ 2\norm{\sqrt{\sigma_{\ddd}} (u-\mathcal{E}(\Pi u))}{\Gamma_{\ddd}}^2
\\
&\qquad +
\norm{\sqrt{|\mbf{b}\cdot\mbf{n}|} \eta}{\Gamma_{\ddd}^-}^2
+
\norm{\sqrt{|\mbf{b}\cdot\mbf{n}|} \lfloor \eta \rfloor }{\Gamma_{\dint}}^2
+
\norm{\sqrt{|\mbf{b}\cdot\mbf{n}|} \eta }{\Gamma_{\dn}^{+}}^2 
+
2\norm{\sqrt{\delta \mbf{h}} (\mbf{b} \cdot \nabla_h \eta )}{}^2
\\
&\qquad +
2s_h^{a,c}(\eta,\eta)+2s_h^{b}(\eta,\eta) \Big)
+
\Big( 2 \nsdg{\xi}^2 \Big) =:
\rom{1}+ \rom{2}.
\end{split}
\end{equation}
The optimal approximation properties of $L^2$-orthogonal projection operator, combined with Lemma \eqref{lemma_E1}, yield

\begin{equation}\label{error_part1}
\rom{1}
\leq C \sum_{T\in {\mathcal{T}}} \Big( \alpha + \sigma_{\ddd}\mbf{h}+ \mbf{h}\big(\mbf{h} \norm{c_0}{L^\infty(\Omega)} +\beta 
+ \delta\beta^2  + \sigma_{a,c} +\sigma_{b,1}  
+ \sigma_{b,2} \beta^2 \big)
\Big) \mbf{h}^{2l-2}|u|_{l,T}^2.
\end{equation}
Next, the inf-sup condition given in Theorem
\ref{lem:inf-sup} along with Lemma \ref{lem:splitty} give
\begin{equation}\label{error_part2}
\begin{split}
\rom{2} 
&\leq
\frac{4}{\Lambda^{2}} \Big( 2\norm{\sqrt{a}\nabla \big(u-\mathcal{E}(\Pi u)\big)}{}^2
+
C(r)/\sigma_{b,1} \norm{\sqrt{\mbf{h}} \mbf{b} \cdot \nabla_h \eta}{}^2
+
C(r)\beta /\sigma_{b,1} \norm{\sqrt{|\mbf{b}\cdot\mbf{n}|}\eta }{\Gamma_{\ddd}^-}^2 \\ 
&\quad
+
C(r)\beta/ \sigma_{b,1} \norm{\sqrt{|\mbf{b}\cdot\mbf{n}|} \lfloor \eta \rfloor}{\Gamma_{\dint}}^2
+
\norm{\sqrt{|\mbf{b}\cdot\mbf{n}|}\eta}{\Gamma_{\dn}^+}^2
+
 \norm{\sqrt{|\mbf{b}\cdot\mbf{n}|}\eta^-}{\Gamma_{\dint}}^2 \\ 
&\quad
+
\gamma^2(\gamma_0^{-1}+ C(r)\mbf{h}/ \sigma_{b,1} ) \norm{u - \mathcal{E}(\Pi u)}{}^2
+
\norm{\mbf{b}}{W^{1,\infty}(\Omega)}^2/\gamma^2_0  \norm{\sqrt{c_0}\eta}{}^2
 \\
&\quad
+
\norm{a \nabla \big( u-\mathcal{E}(\Pi u)\big) \sigma_{\ddd}^{-1/2}}{\Gamma_{\ddd}}^2 
+
C(r) \norm{\sqrt{\sigma_{\ddd}}\big(u-\mathcal{E}(\Pi u)\big)}{\Gamma_{\ddd}}^2 
+ s_h^{a,c}(\eta,\eta)+ s_h^b(\eta,\eta) \Big) \\
& \leq
C \sum_{T\in {\mathcal{T}}} \Big( \alpha + \beta^2 \mbf{h}/\sigma_{b,1} +\beta \mbf{h}
+
(\gamma^2/\gamma_0) \mbf{h}^2 
+
 \norm{\mbf{b}}{W^{1,\infty}(\Omega)}^2\norm{c_0}{L^\infty(\Omega)} /\gamma^2_0 \mbf{h}^2
 \\
&\quad
+
\gamma^2 C(r)\mbf{h}^3/ \sigma_{b,1}
+ 
\alpha^2 \mbf{h}^{-1} \sigma_{\ddd}^{-1} 
+ 
\mbf{h} \sigma_{\ddd} 
+
\sigma_{a,c} \mbf{h} +\sigma_{b,1} \mbf{h} 
+
\sigma_{b,2} \beta^2 \mbf{h}
\Big) \mbf{h}^{2l-2}|u|_{l,T}^2.
\end{split}
\end{equation}
The result then follows by combining the bounds
\eqref{error_part1} and \eqref{error_part2}.
\end{proof}

\begin{remark}
We note that the above a-priori error bound for RFEM
\eqref{finall_error_bound} is optimal with respect to the mesh-size $h$. If $\mbf{b} = \mbf{0}$,
the error bound
\eqref{finall_error_bound} reduces to
\[
\norm{\sqrt{a}\nabla ( u - \mathcal{E}(u_h))}{}^2
+
\norm{\sqrt{\sigma_{\ddd}} (u-\mathcal{E}(u_h))}{\Gamma_{\ddd}}^2
+
s_h^{a,0}(u-u_h, u-u_h)
\leq C \mbf{h}^{2l-2} |u|^2_{l}.
\]
The above error bound is $h$-optimal and coincides the error bound from \cite{RFEM} which was shown for standard meshes consisting of simplices. On the other hand, if in the pure hyperbolic case with
diffusion tensor $a=0$, we deduce the error bound
\begin{equation*}
\begin{split}
\norm{u-u_h}{\db}^2
+
\norm{\sqrt{\delta \mbf{h}} (\mbf{b} \cdot \nabla_h (u-u_h))}{}^2 
+
s_h^{0,c}(u-u_h,u-u_h)
+
s_h^{b}(u-u_h,u-u_h) 
\leq
C \mbf{h}^{2l-1} |u|_l^2,
\end{split}
\end{equation*}
which is also $h$-optimal.
\end{remark} 

\section{Numerical experiments}\label{numerics}

We shall now
investigate numerically the asymptotic behaviour of the proposed R-FEM method on
general polygonal meshes. 
We first introduce a sequence of polygonal meshes, indexed by their
element size, together with the simplicial sub-meshes used; see Figure \ref{fig_one}(a) for an example or \ref{mesh_figure} for a refinement of the latter. We point out that the
sub-triangulations used do not introduce any new points in
the interior of the polygonal mesh to keep the
number of degrees of freedom in the triangulated sub-meshes to a
minimum. As expected from the theory, we have numerically observed that increasing the number of degrees of
freedom in the sub-meshes as a proportion of the polytopic meshes does
not increase the order of the method and only improves the accuracy
marginally. All the polygonal meshes are generated by PolyMesher, cf. \cite{polymesher}. Unless clearly stated, the R-FEM solution $u_h$ is computed by \eqref{method} with the following choices of stabilisation parameters
$C_\sigma$ appearing in $\sigma_{\ddd}$, $\sigma_{a,c}$ in
\eqref{stabilisationa}, $\sigma_{b,1}$ and $\sigma_{b,2}$ in
\eqref{stabilisationb}, all with value equal to $10$.

\begin{figure}[!h]
\centering
\subcaptionbox{$\mathcal T$ with $256$
polygons.}{
\includegraphics[scale=0.35]{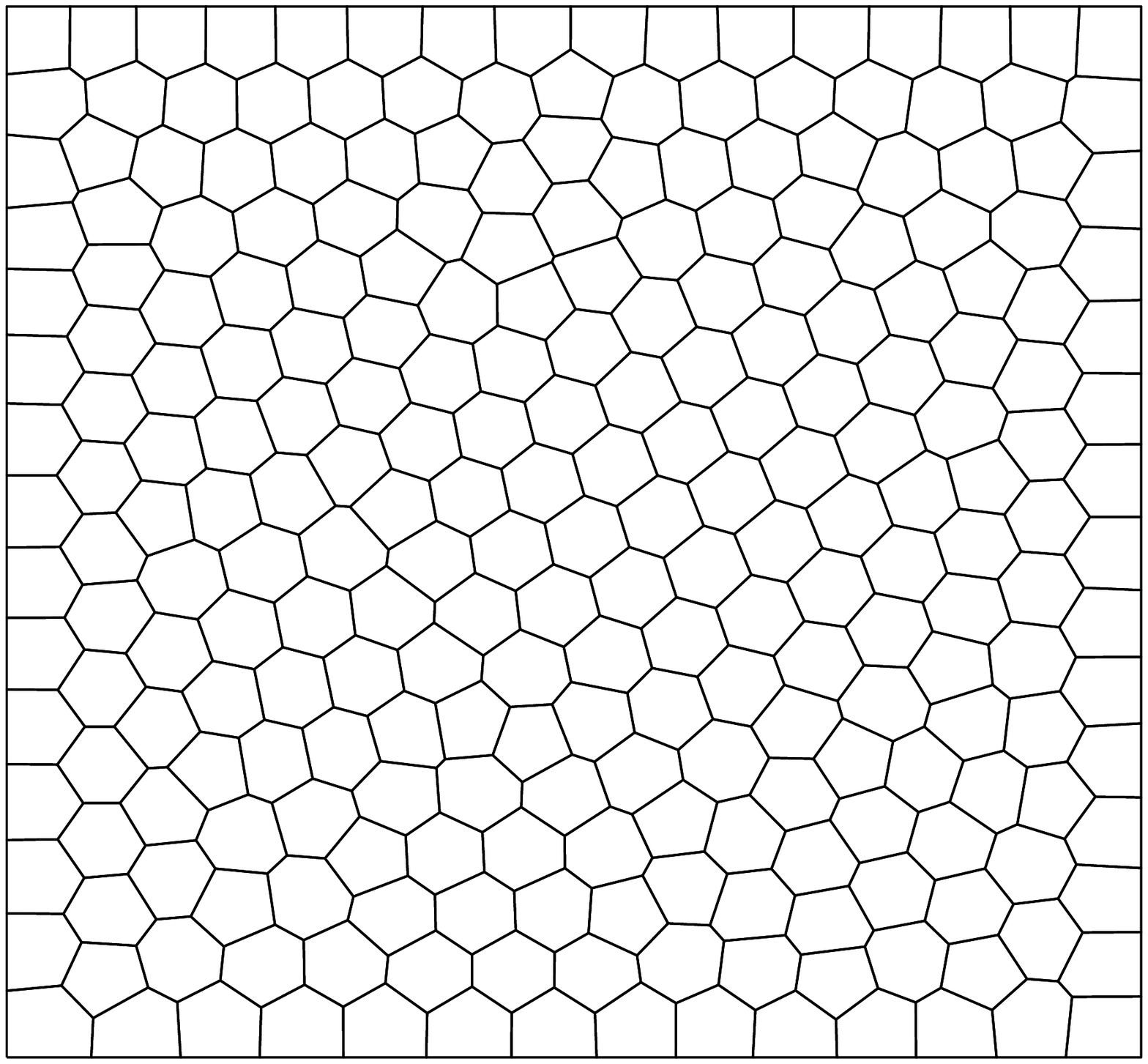}
}
\hfill
\subcaptionbox{$966$-triangle sub-mesh.}{
\includegraphics[scale=0.35]{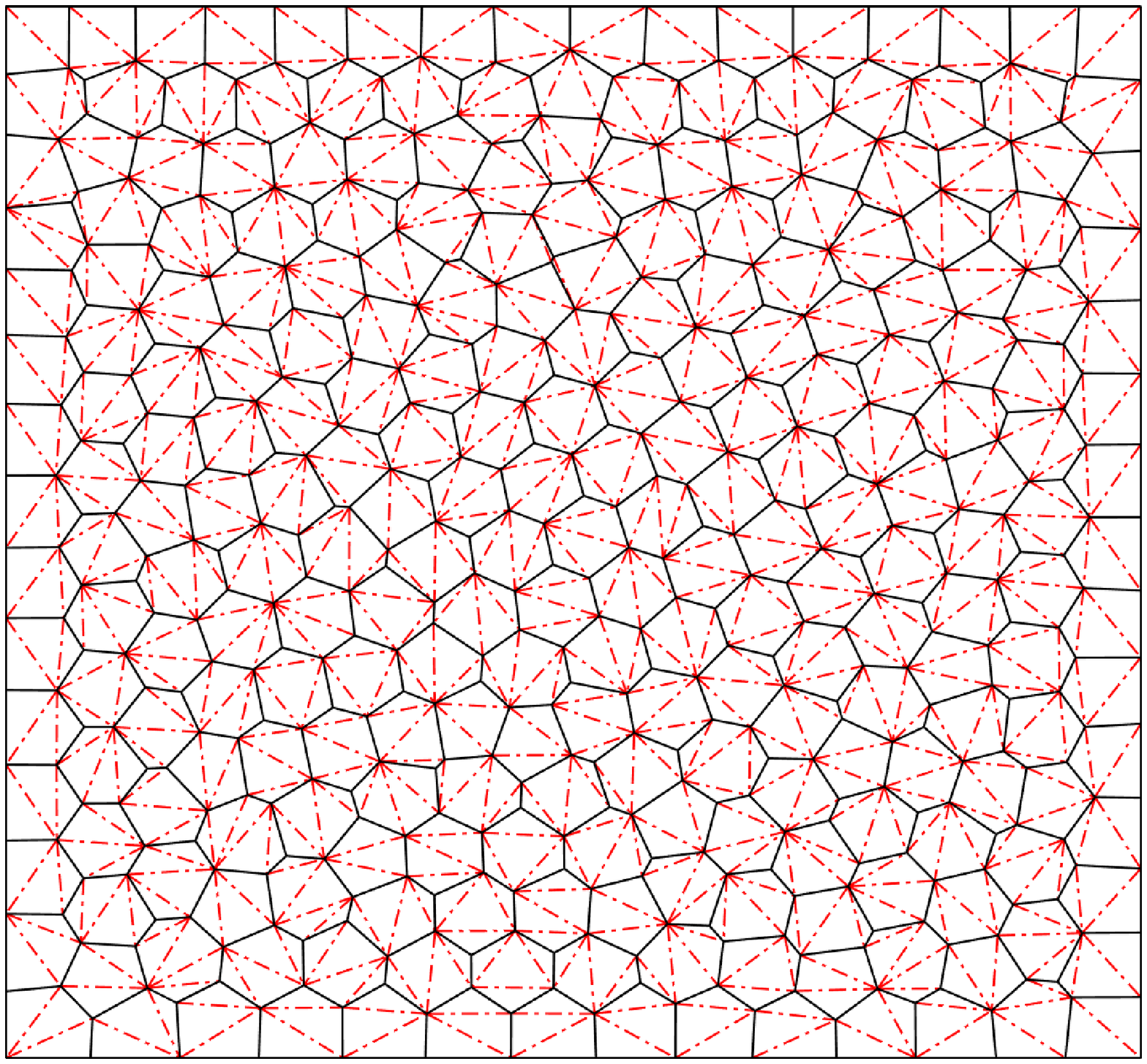} 
}
\caption{
\label{mesh_figure}
An example of a polytopic mesh $\mathcal T$ which is a refinement of the mesh from Figure \ref{fig_one}.}
\end{figure}

\subsection{Example 1: a first order hyperbolic problem}\label{example1}

Let $\Omega:=(0,1)^2$, and choose
\begin{equation}
a \equiv 0, \quad {\bold{b}}=(2-y^2,2-x), \quad c= 1+(1+x)(1+y)^2; 
\end{equation}
the forcing function $f$ is selected so that the analytical solution to \eqref{pde} is given by 
\begin{equation}
u(x,y) = 1+\sin(\pi (1+x)(1+y)^2/8),
\end{equation}
cf. \cite{hss,DGpoly2}. 

\begin{figure}[h!]
\centering
\includegraphics[scale=0.4]{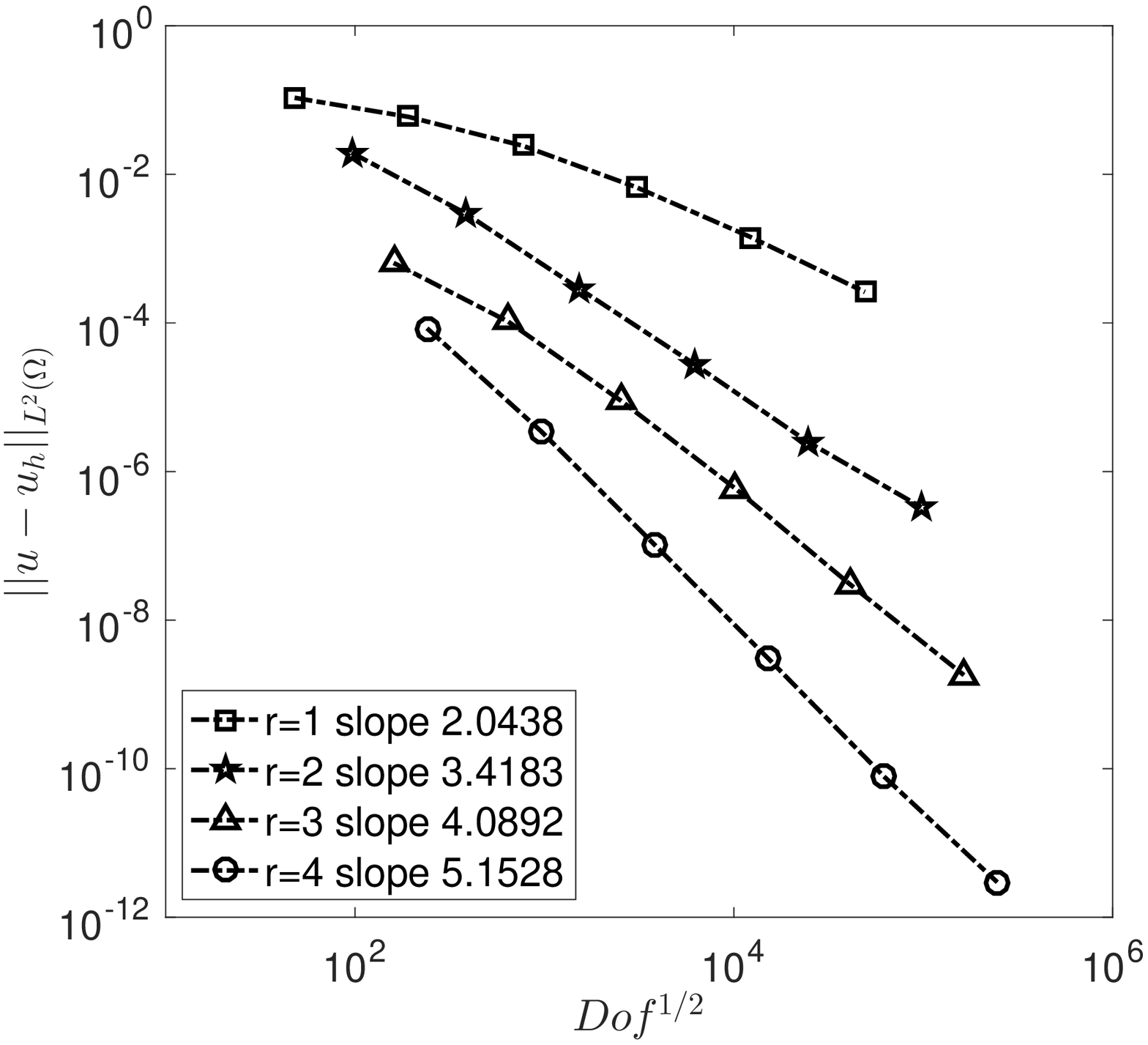}
\hfill
\includegraphics[scale=0.4]{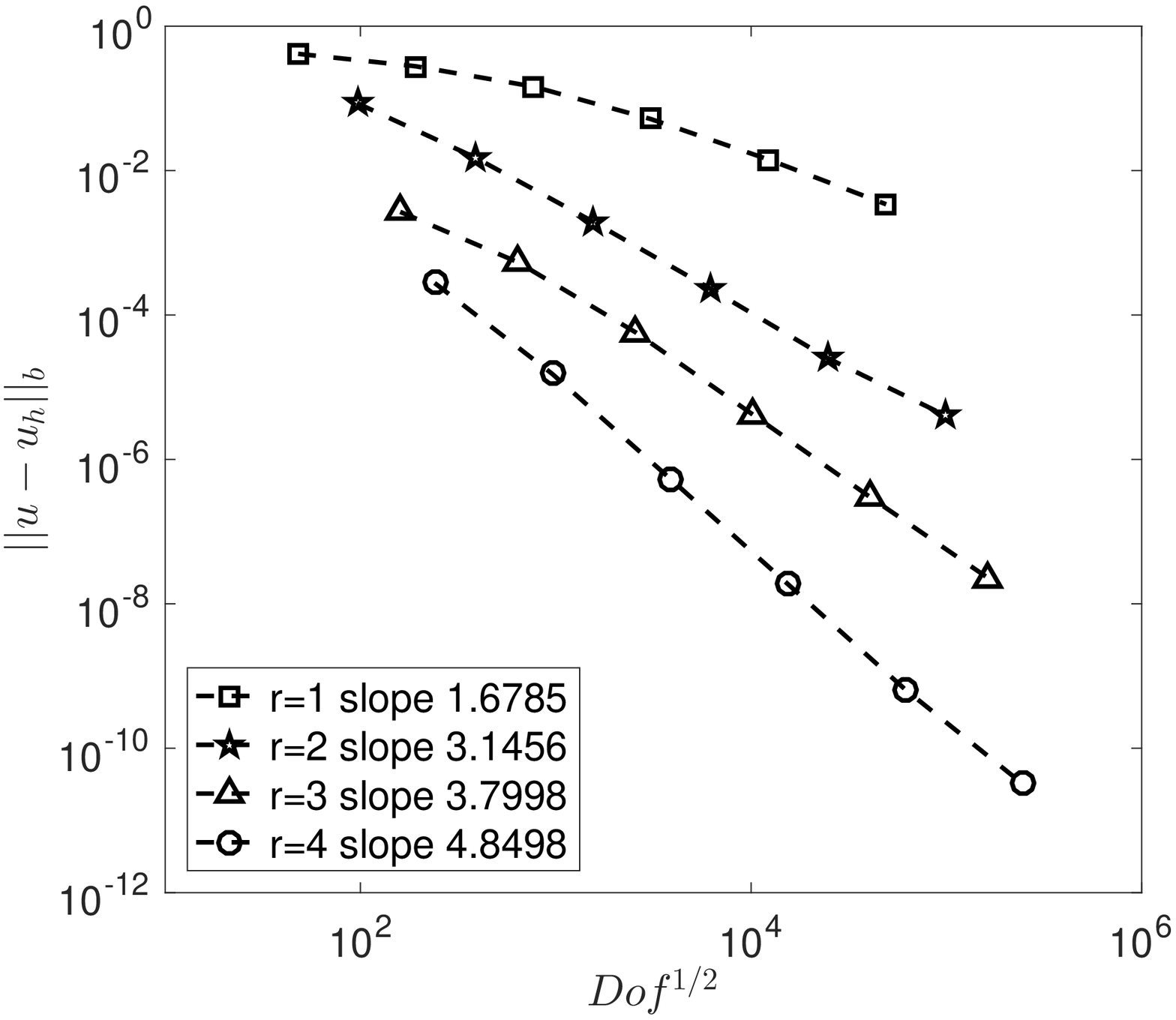}
\caption{
\label{ex1:h-refine}
Example 1. Error against numerical degrees of freedom (Dof). Here
we examine the convergence of the R-FEM under $h$--refinement for
polynomial degrees $r=1,2,3,4$. Notice that
$\norm{u-u_h}{L^2(\Omega)} = O(h^{r+1})$, which is optimal. In
addition, $\norm{u-u_h}{\db}$ appears to converge faster than the analysis of Theorem \ref{Theorem:error} suggests for even polynomial degrees.}
\end{figure}

We examine the convergence behaviour of the R-FEM with respect to
$h$--refinement, with fixed polynomial $r$, for $r=1,\ldots,4$. In
Figure \ref{ex1:h-refine} the error in two
different norms, against the square root of the number of degrees of
freedom in the underlying finite element space $V_h^r$ is given. The slope of
the convergence rate shown is the slope of the last line segment for each convergence line.
We observe that $\|u-u_h \|$ and $\norm{u-u_h}{\db}$ converge to
zero at the optimal rates $\mathcal{O}(h^{r+1})$ and at least
$\mathcal{O}(h^{r+\frac{1}{2}})$, respectively, as the mesh size $h$
tends to zero for each fixed $r$. The latter results agree with the
result \eqref{finall_error_bound} in Theorem \ref{Theorem:error}.
Notice that the behaviour of $\norm{u-u_h}{\db}$ appears to differ as to whether $r$ is odd or even. This is well documented in the study of
discontinuous Galerkin methods for hyperbolic conservation laws
\cite{cockburn2003,gmp2015} and seems to be a feature of the above R-FEM method
as well.

\subsection{Example 2: a nonsymmetric elliptic problem}\label{example2}
Let
$\Omega := (0,1)^2$, and choose
\begin{equation}
a \equiv 1, \quad {\bold{b}}=(1-y,1-x), \quad c\equiv 2; 
\end{equation}
the forcing function $f$ is selected so that the analytical solution
to \eqref{pde} is given by
\begin{equation}
u(x,y) = \sin(\pi x)\sin(\pi y).
\end{equation}
We examine the convergence behaviour of the R-FEM
with respect to $h$-refinement on quasiuniform polygonal meshes, with fixed polynomial $r$, for
$r=1,\dots,4$. In Figure \ref{ex2:h-refine} we plot the error, in
terms of the $L^2$-norm and the (broken) $H^1$-seminorm for both discontinuous R-FEM
approximation $u_h$ and the conforming R-FEM approximation
$\mathcal{E}(u_h)$, against the square root of the number of degrees
of freedom in the underlying finite element space $V_h^r$. We observe that $\|u-u_h \|$ and $|u-u_h|_{H^1{(\Omega,\mathcal{T})}} $
converge to zero at the optimal rates $\mathcal{O}(h^{r+1})$ and
$\mathcal{O}(h^{r})$, respectively, as the mesh size $h$ tends to zero
for each fixed $r$. Moreover, the difference between the R-FEM solutions
$u_h$ and $\mathcal{E}(u_h)$ is marginal. The results are in accordance with
Theorem \ref{Theorem:error}.

\begin{figure}[!ht]
\centering
\includegraphics[scale=0.42]{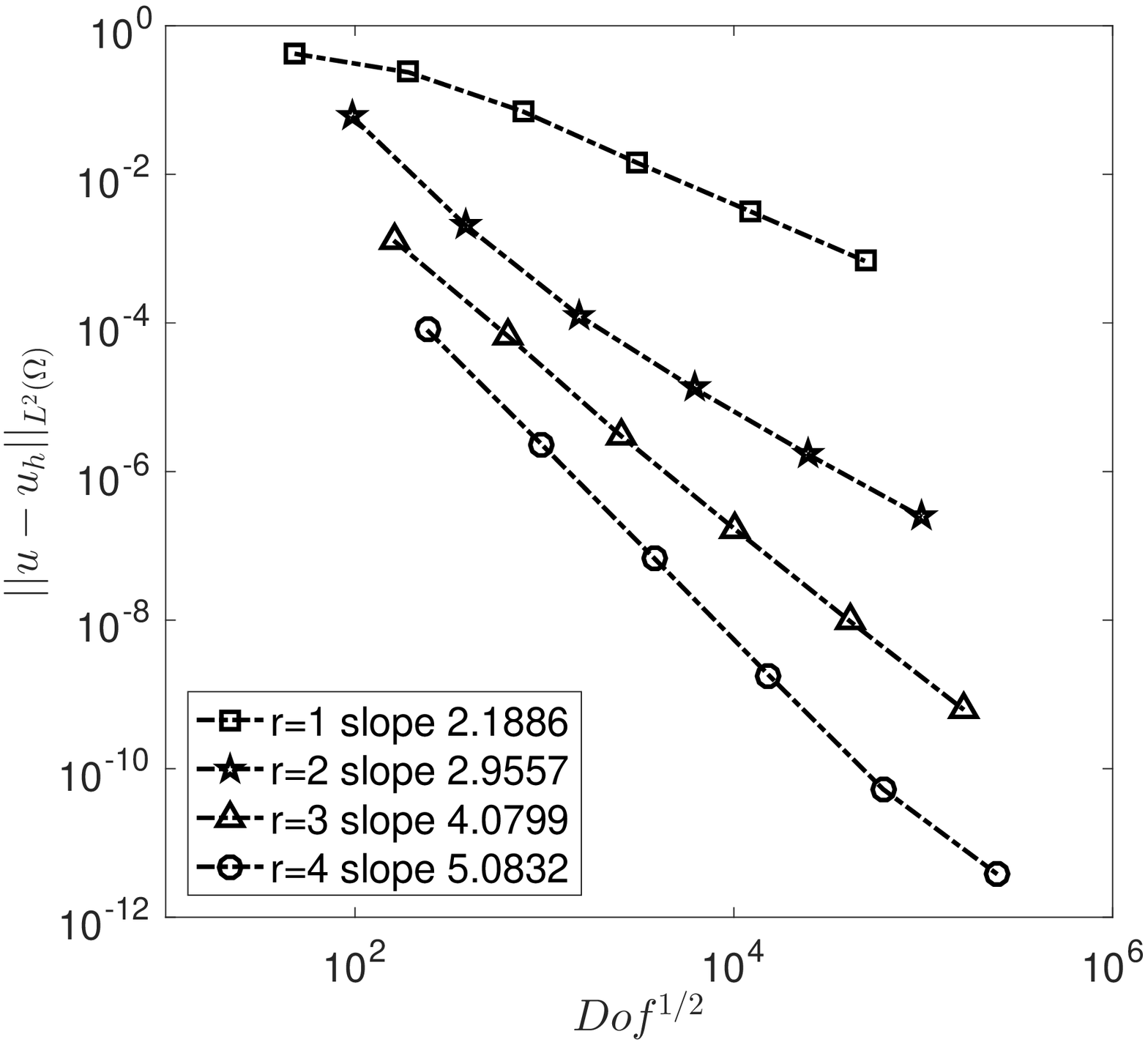} 
\hfill
\includegraphics[scale=0.42]{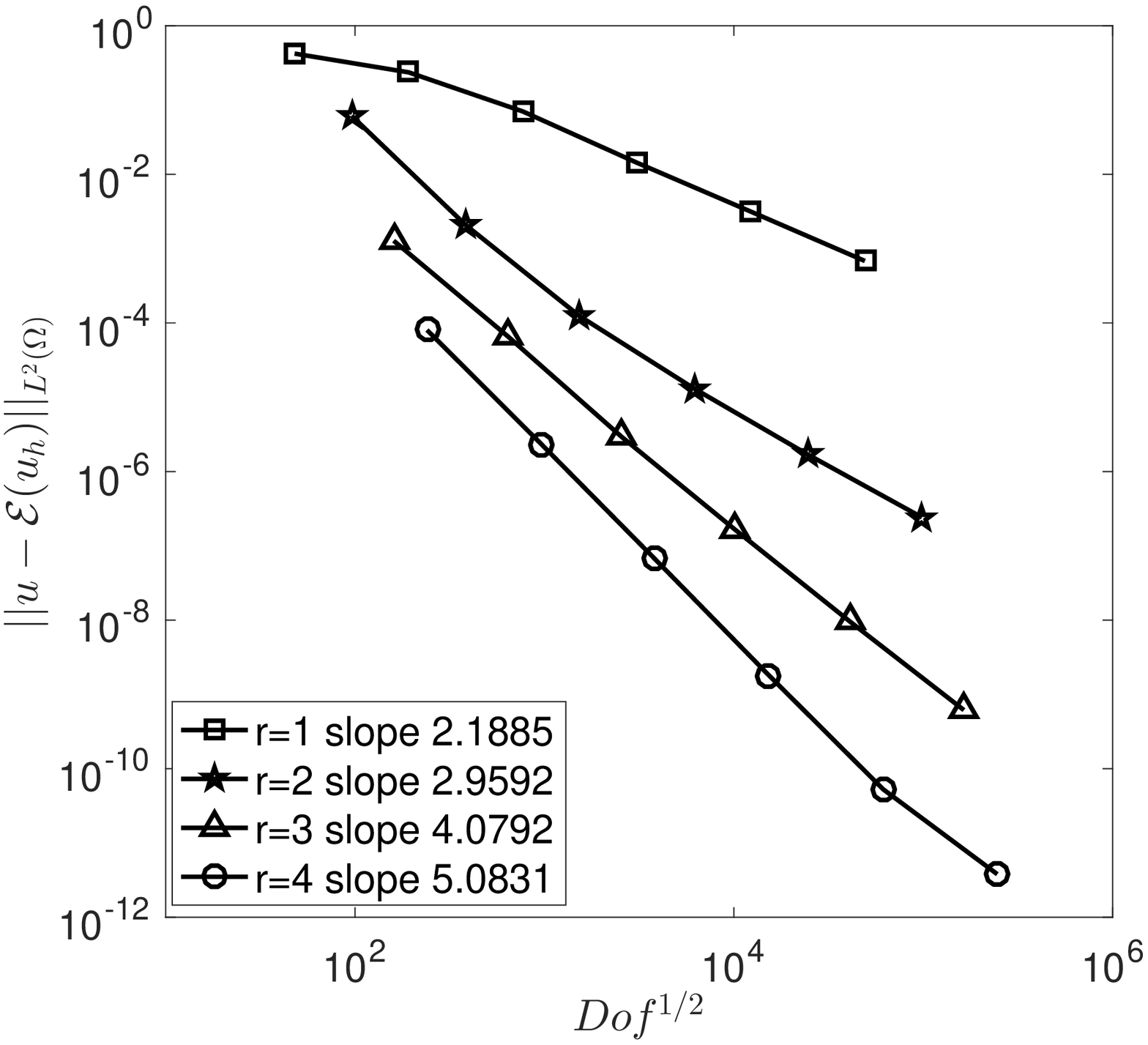} 
\includegraphics[scale=0.42]{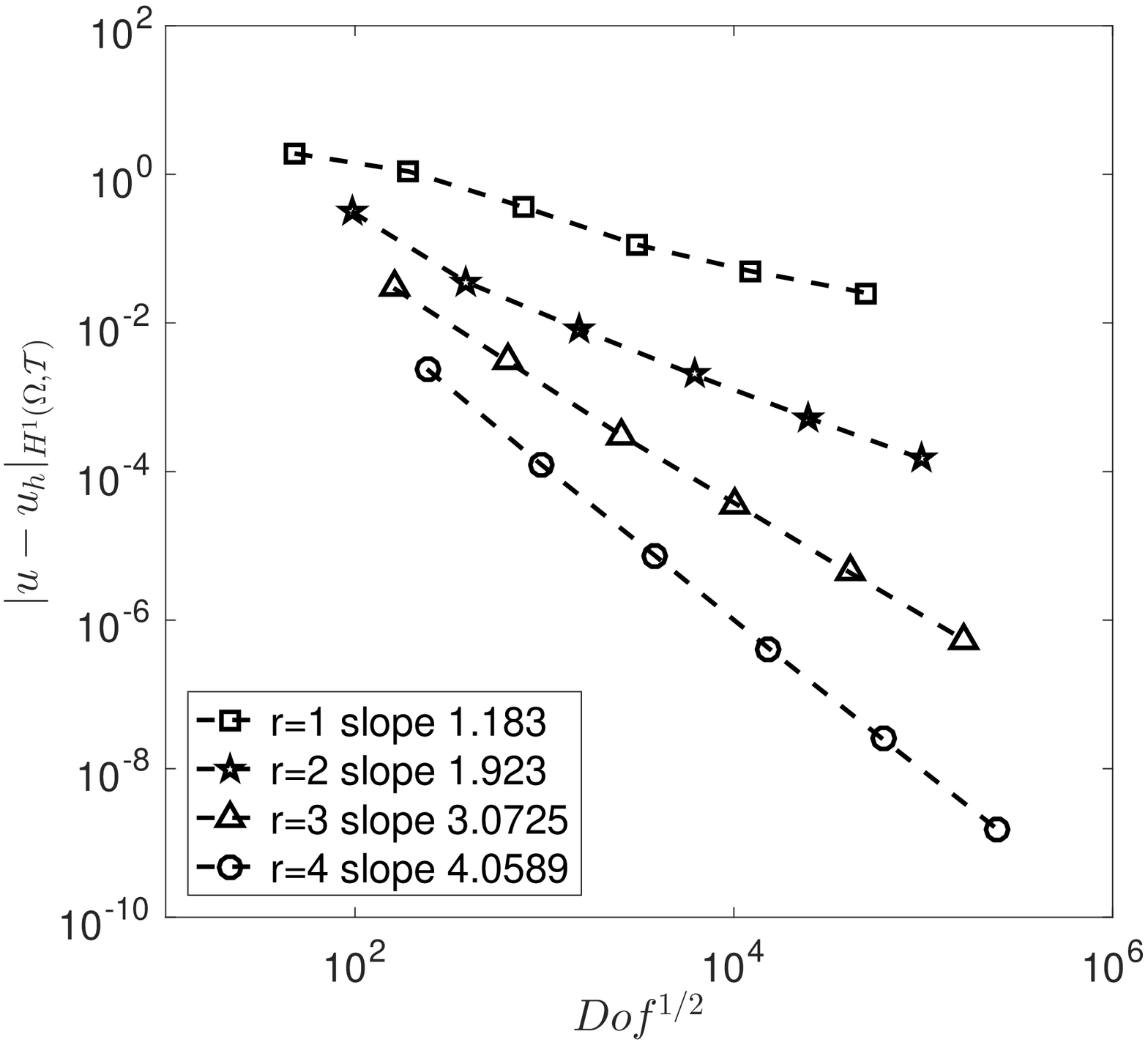}
\hfill
\includegraphics[scale=0.42]{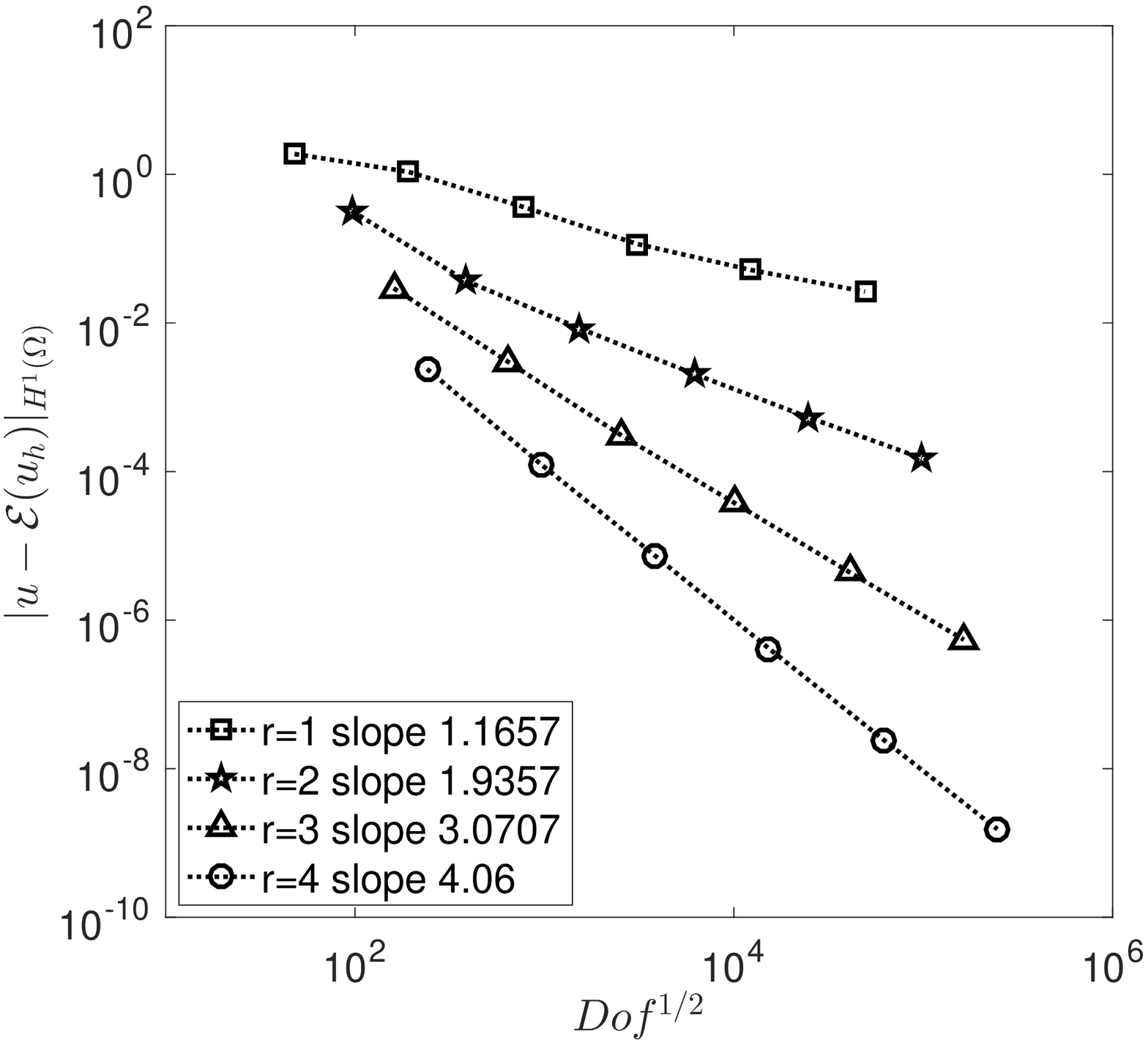}
\caption{
\label{ex2:h-refine}
Example 2. Convergence of the R-FEM under
$h$--refinement for $r=1,2,3,4$. }
\end{figure}

\subsection{Example 3: a advection-dominated elliptic problem}
\label{example3}
We now investigate the numerical stability of the R-FEM
through a series of advection-dominated elliptic problems. Let $\Omega := (0,1)^2$, and choose
\begin{equation}
a \equiv \epsilon, \quad {\bold{b}}=(1,1), \quad c\equiv 0; 
\end{equation}
together with a forcing function $f=1$ and homogeneous Dirichlet
boundary conditions. This example is known to admit boundary
layers in the vicinity of the right and top boundaries $x=1$ and $y=1$ when $\epsilon\ll 1$. We
investigate the stability of R-FEM as $\epsilon\to 0$ on a
fixed, relatively coarse mesh which is insufficient to resolve the
layer. More specifically, we consider a fixed mesh consisting of $1024$
polygons over the domain, we choose $r=1$ and take $\epsilon =10^{-2}, 10^{-4}, 10^{-6}$. In Figure
\ref{ex3:stability} we plot the numerical solutions $u_h$ and
$\mathcal{E}(u_h)$. We observe that for $\epsilon =10^{-2}$, the mesh
is fine enough to resolve the layer and, hence, both $u_h$ and
$\mathcal{E}(u_h)$ are stable. For $\epsilon =10^{-4}$, the mesh is no
longer fine enough to resolve the layer. However, the solutions are
still stable in the sense that neither solution admits non-physical
oscillations near the boundary. In the case $\epsilon =10^{-6}$,
the mesh is too coarse to resolve the layer. Both
$u_h$ and $\mathcal{E}(u_h)$ appear to be stable in this case. Moreover,
the solutions are very close to the solution for the pure hyperbolic
problem with inflow boundary satisfying Dirichlet boundary conditions. This is expected as the boundary conditions have been imposed in a weak fashion for the numerical method to be valid in the hyperbolic limit $\epsilon=0$ also, in the spirit of the classical discontinuous Galerkin methods.

\begin{figure}[!ht]
\centering
\subcaptionbox{$u_h$ with $\epsilon=10^{-2}$}{
\includegraphics[scale=0.33]{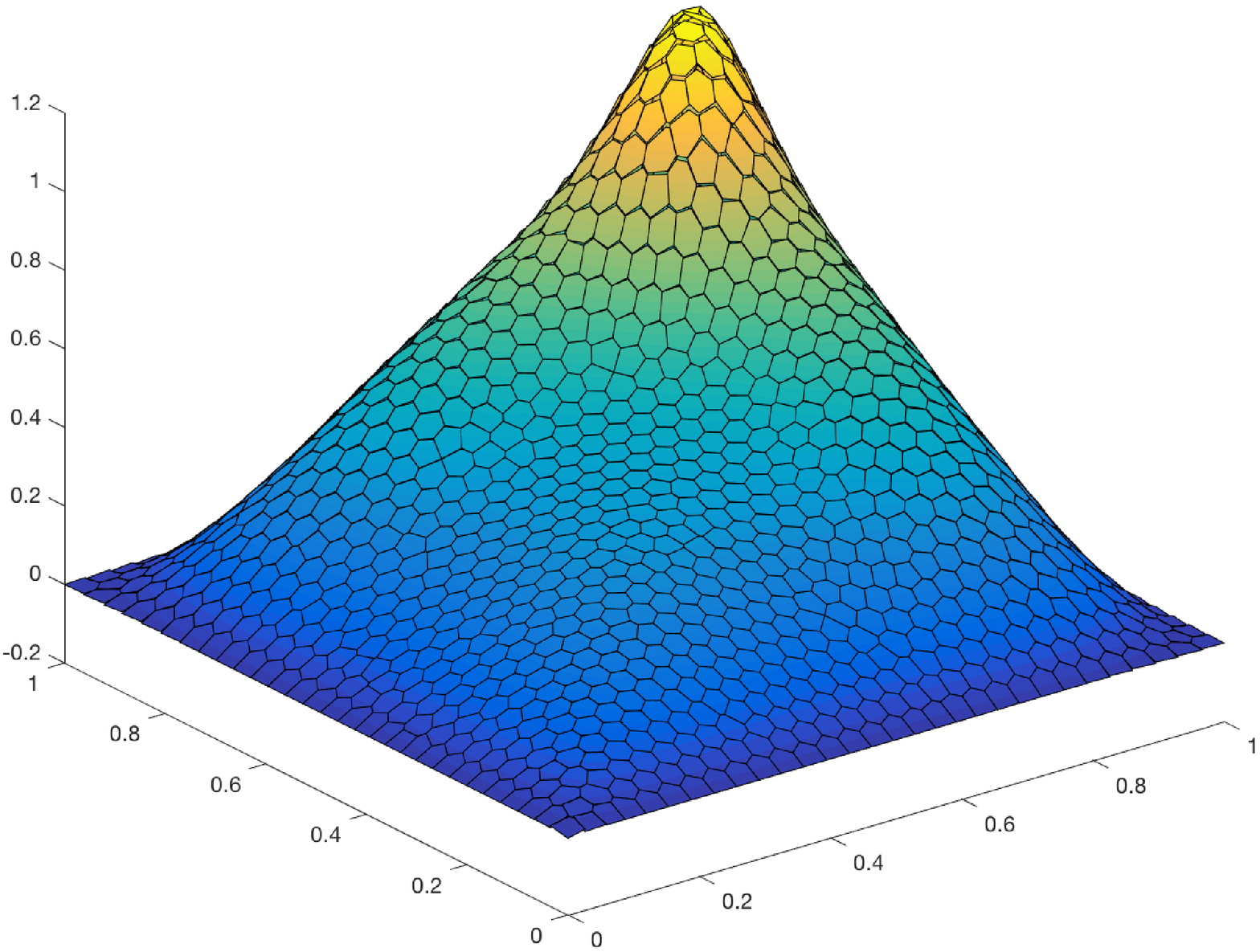} 
}
\subcaptionbox{$\mathcal E(u_h)$ with $\epsilon=10^{-2}$}{
\includegraphics[scale=0.33]{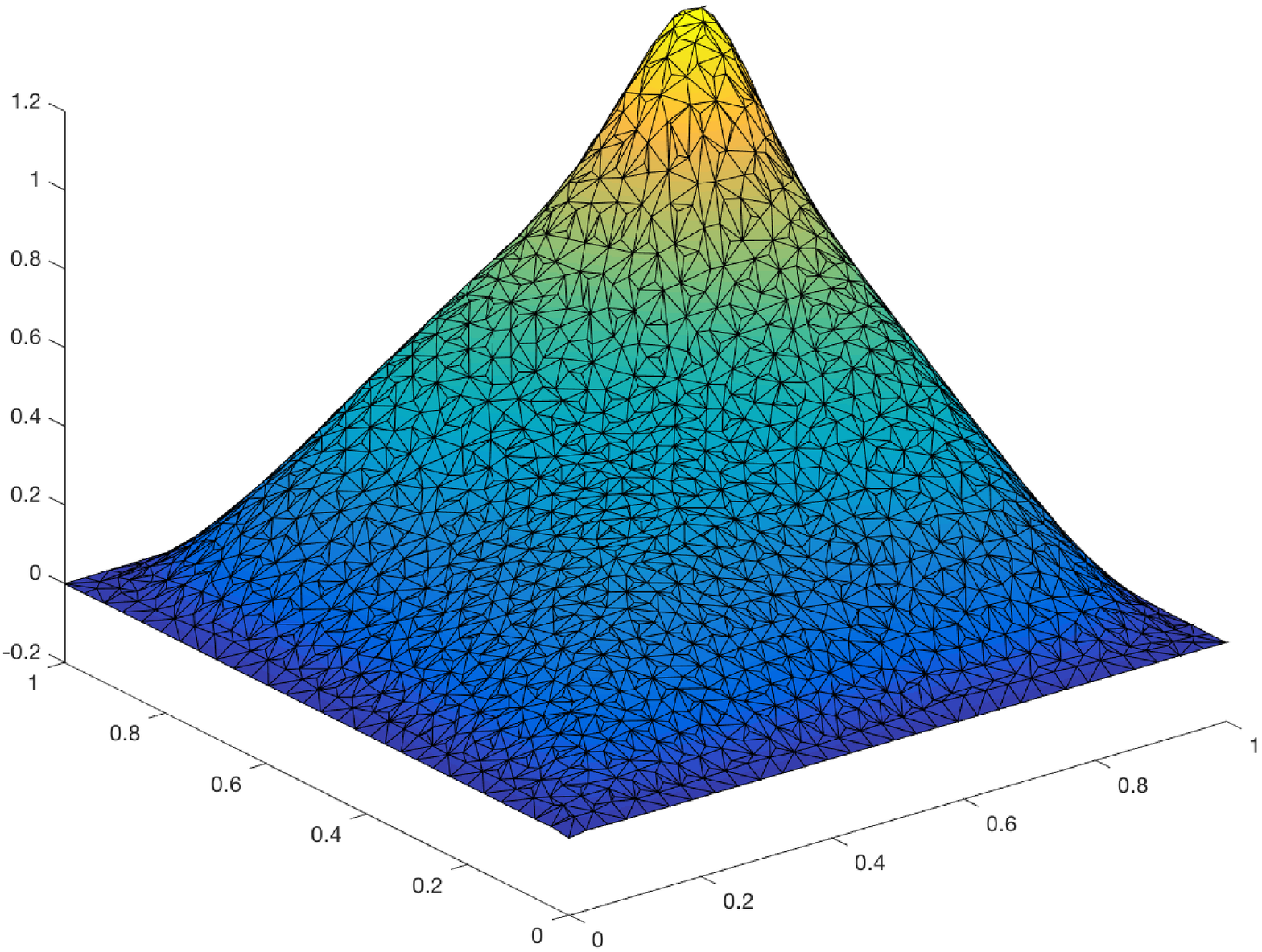}
}
\subcaptionbox{$u_h$ with $\epsilon=10^{-4}$}{
\includegraphics[scale=0.33]{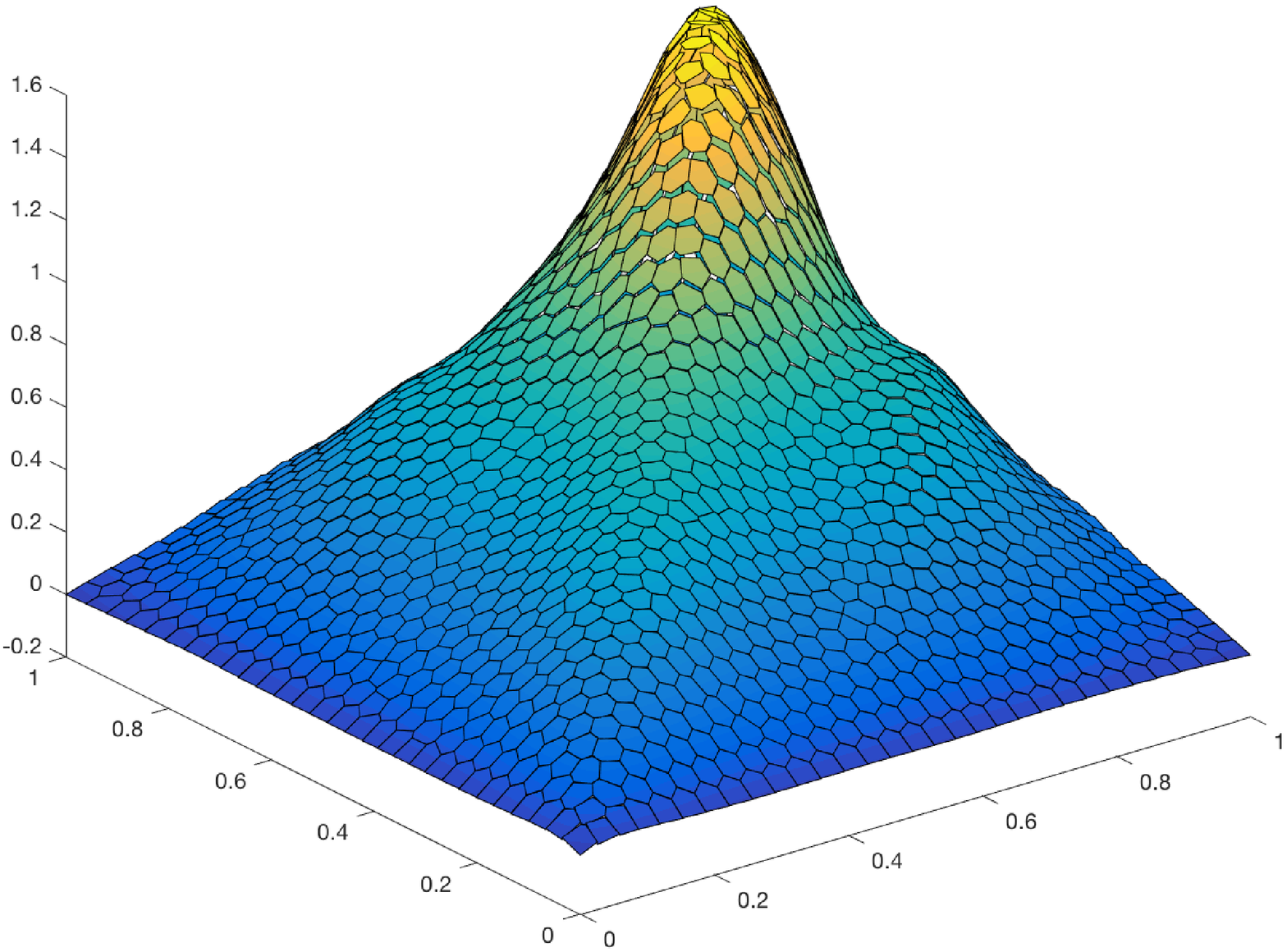} 
}
\subcaptionbox{$\mathcal E(u_h)$ with $\epsilon=10^{-4}$}{
\includegraphics[scale=0.33]{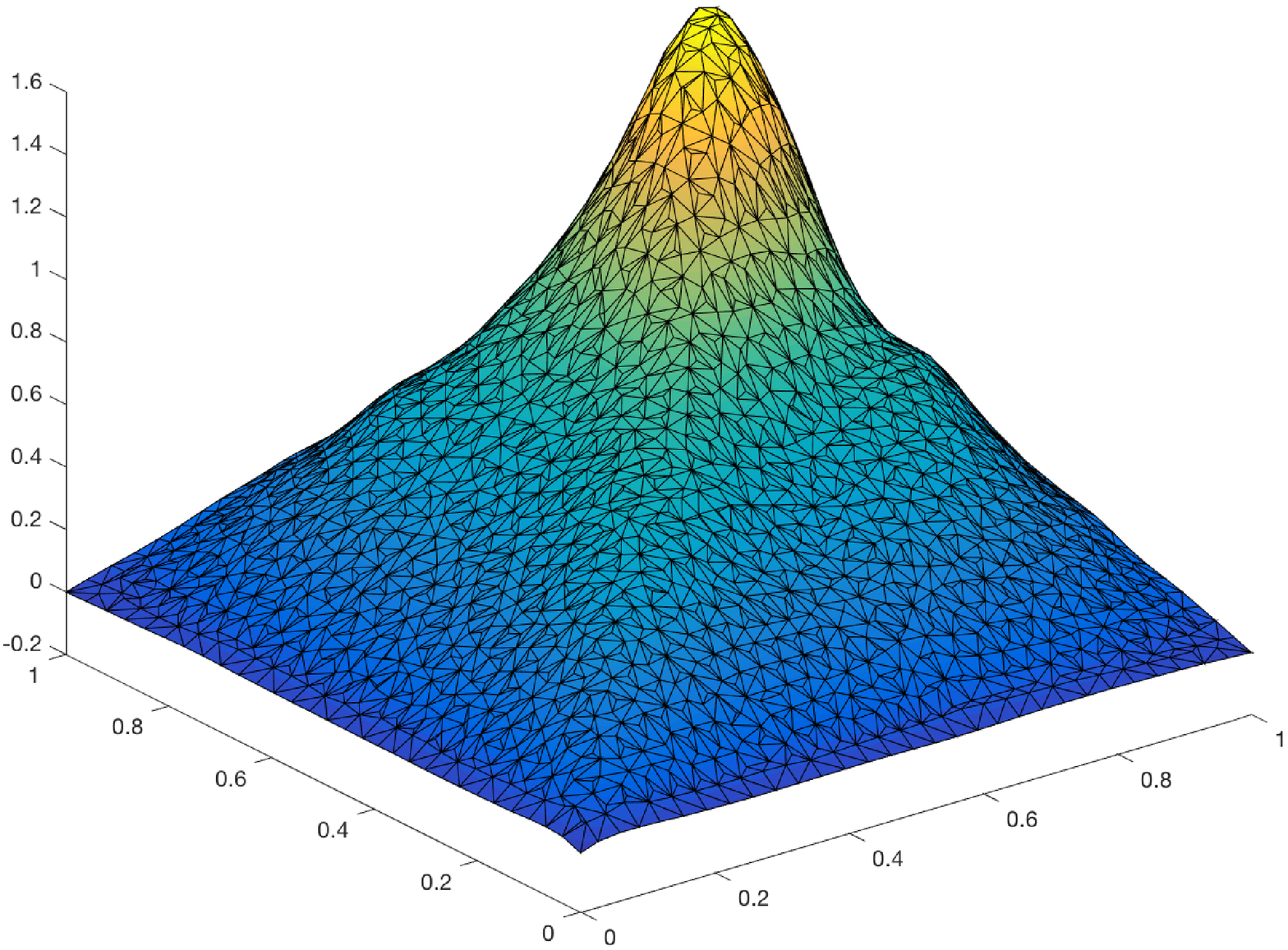} 
}
\subcaptionbox{$u_h$ with $\epsilon=10^{-6}$}{
\includegraphics[scale=0.33]{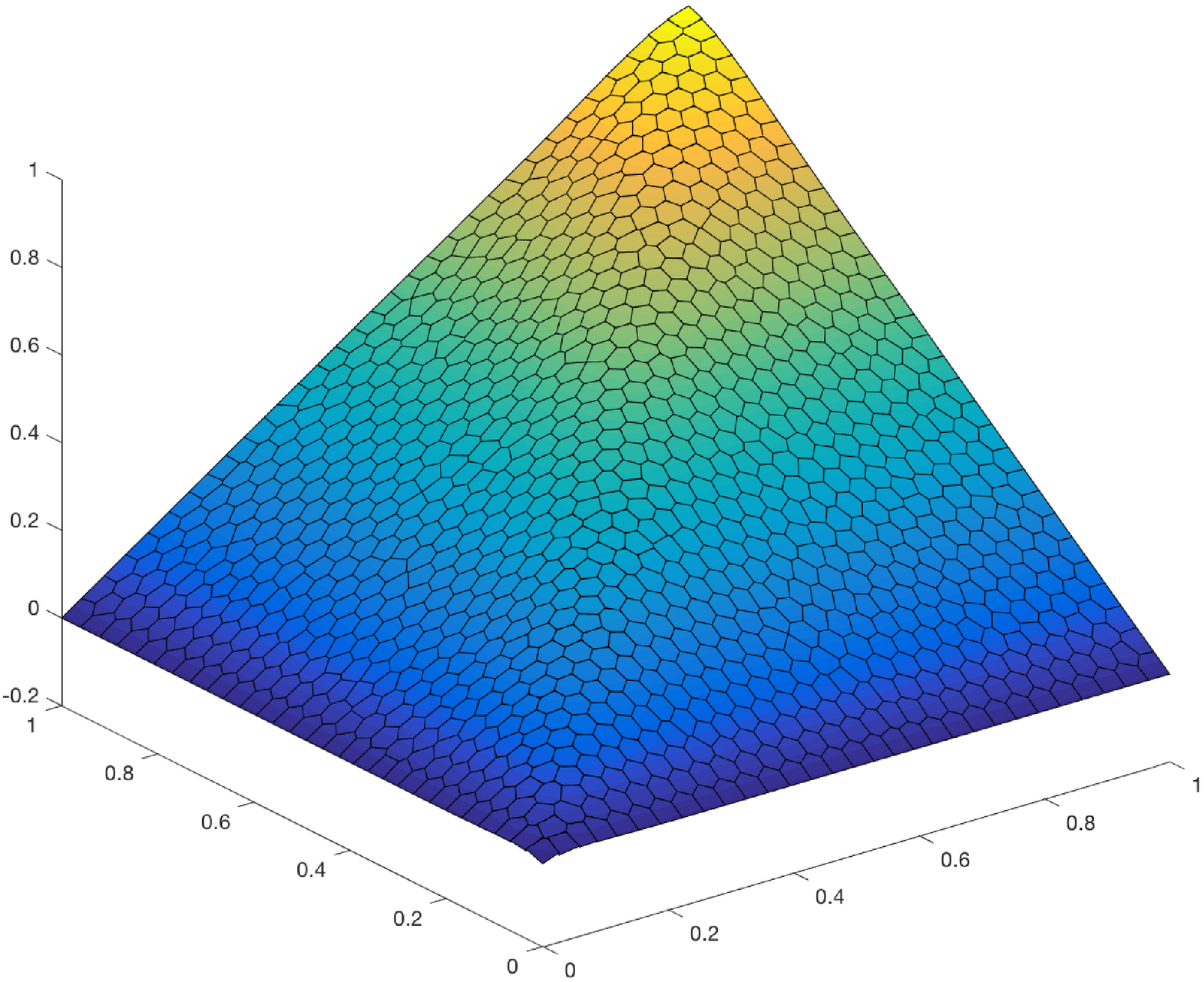}
}
\subcaptionbox{$\mathcal E(u_h)$ with $\epsilon=10^{-6}$}{
\includegraphics[scale=0.33]{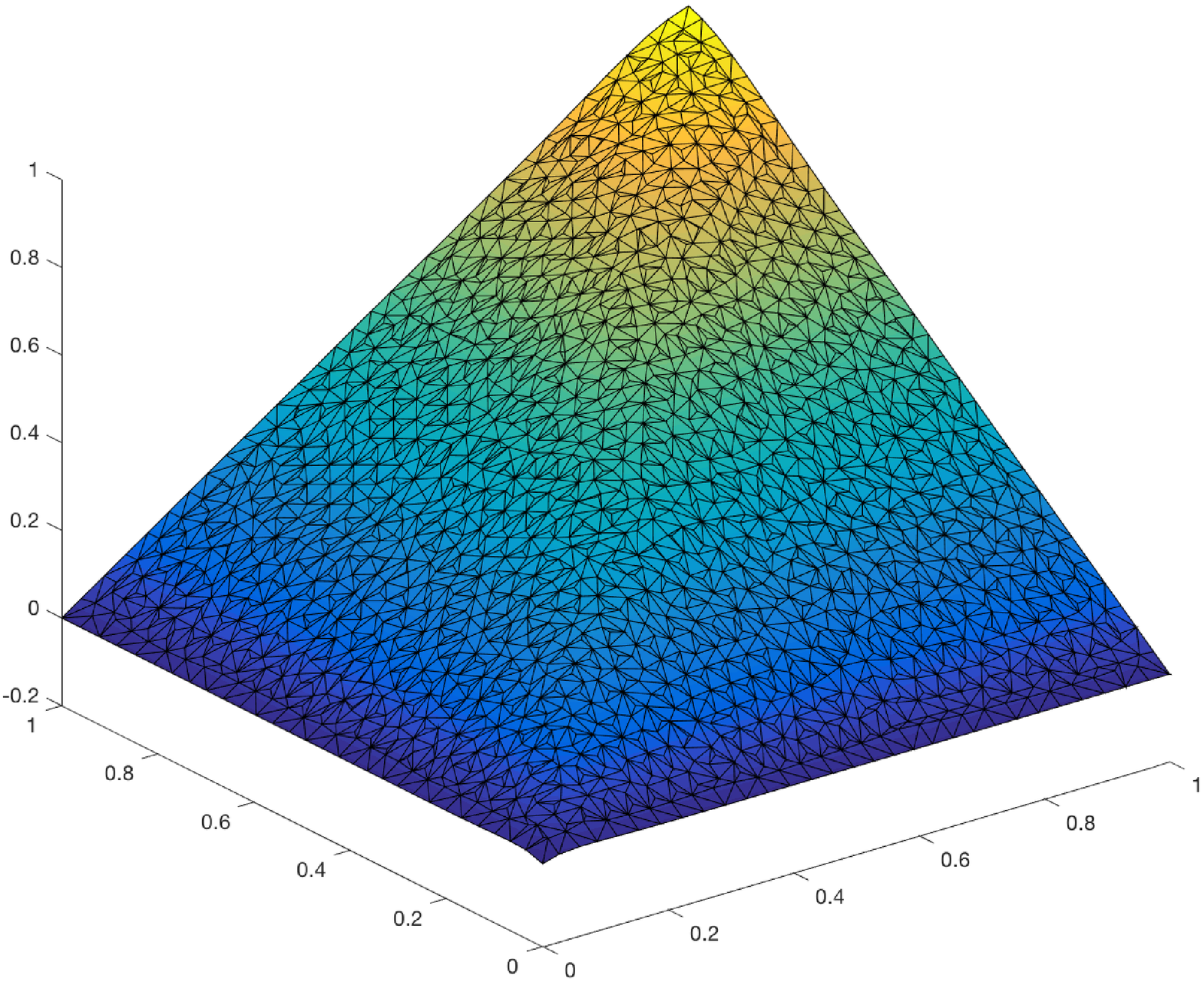}
}
\caption{
\label{ex3:stability}
Example 3. R-FEM solutions for a mesh consisting of $1024$ polygonal
elements and $r=1$.
}
\end{figure}

\subsection{Example 4 -- a mixed-type problem}
\label{example4}

We now consider a partial differential equation with
nonnegative characteristic form of mixed type. To this end, we let
$\Omega=(-1,1)^2$, and consider the PDE problem:
\begin{equation}
\begin{cases}
-x^2u_{yy} + u_x+u = 0, &\quad \text{for } -1 \leq x \leq 1, y > 0 ,\\
u_x+u = 0, &\quad \text{for } -1 \leq x \leq 1, y \leq 0 ,
\end{cases}
\end{equation}
with analytical solution:
\begin{equation}
u(x,y) =
\begin{cases}
\sin (\frac{1}{2}\pi (1+y)) \exp(-(x+\frac{\pi^2x^3 }{12})),& \text{for } -1 \leq x \leq 1, y > 0 ,\\
\sin (\frac{1}{2}\pi (1+y)) \exp(-x), &\text{for } -1 \leq x \leq 1, y\leq 0,
\end{cases}
\end{equation}
cf. \cite{DGpoly2}. This problem is hyperbolic in the region $y\leq
0$ and parabolic for $y > 0$. Notice that, in order to ensure
continuity of the normal flux across $y=0$ where the partial
differential equation changes type, the analytical solution has a
discontinuity across the line $y=0$, cf. \cite{hss}.

\begin{figure}[h!]
\centering
\subcaptionbox{An aligned mesh with $64$
polygons.}{
\includegraphics[scale=0.37]{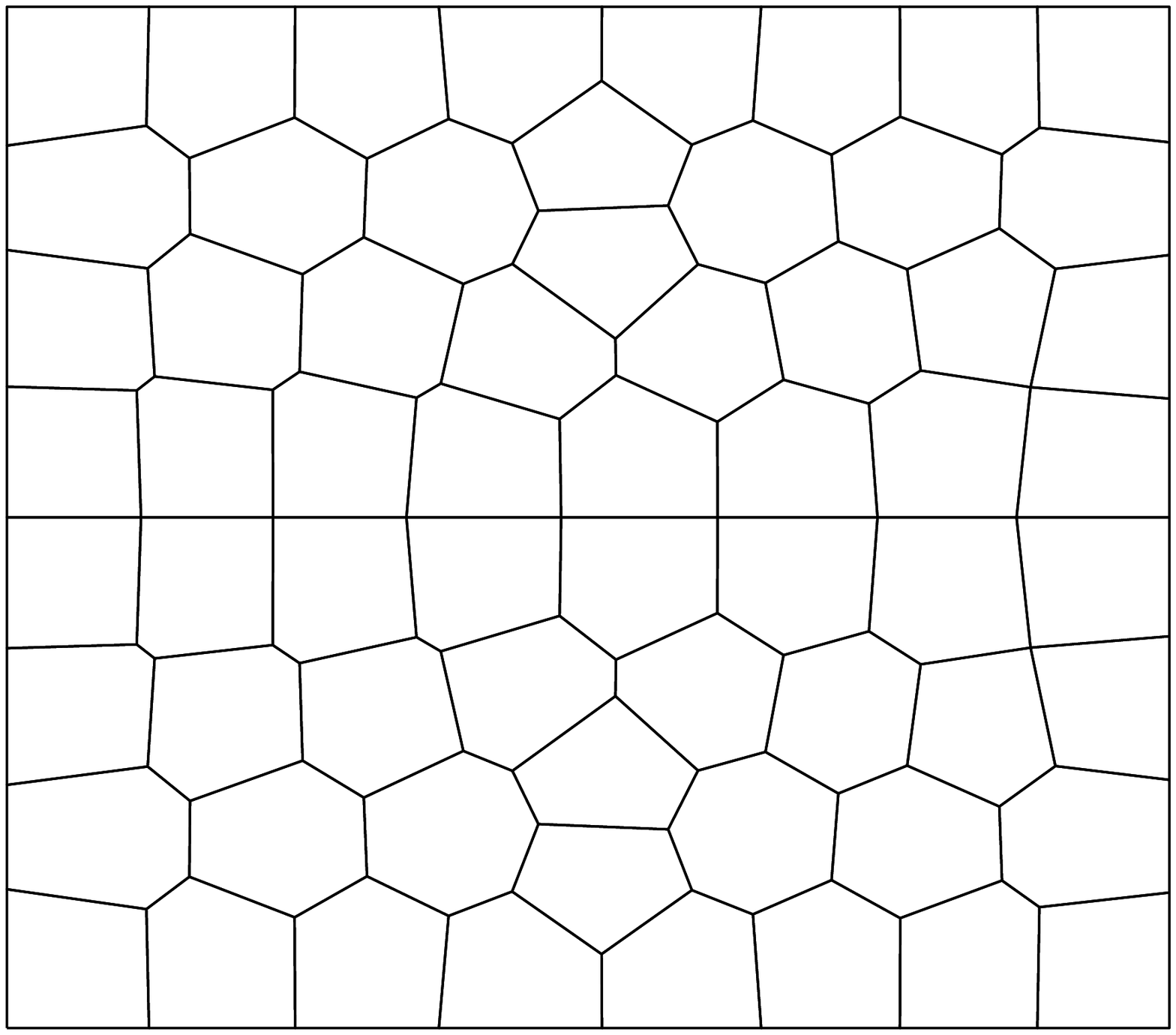}
}
\hfill
\subcaptionbox{A $208$ triangle sub-mesh.}{
\includegraphics[scale=0.38]{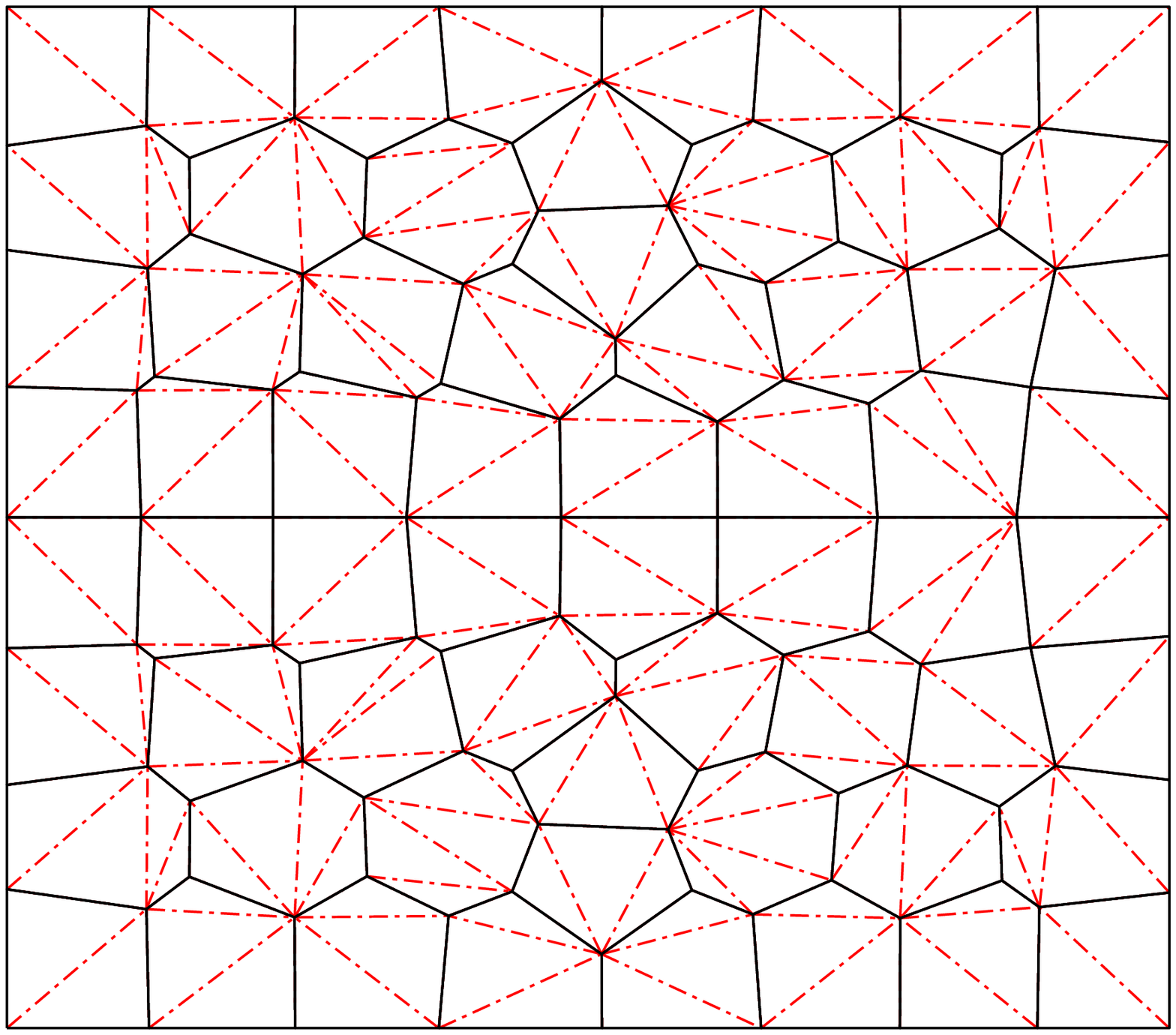}
}
\caption{
\label{aligned_mesh_figure}
An aligned polytopic mesh, $\mathcal T$,
with $64$ polygons and $208$ triangle sub-mesh
$\tilde{\mathcal{T}}$. }
\end{figure}

We examine the convergence behaviour of the R-FEM with respect to
$h$-refinement, with fixed polynomial $r$, for $r=1,\dots,4$. To have the opportunity to possibly observe optimal convergence rates, we align the polygonal mesh with the solution's discontinuity; a typical mesh is shown in
Figure \ref{aligned_mesh_figure}. Also, for this example the recovery operator
is constructed in piecewise fashion over the two
subdomains. This ensures the conforming R-FEM solution
$\mathcal{E}(u_h)$ is able to have a jump discontinuity over the
interface where the problem changes type. 

In Figure \ref{ex4:h-refine} we plot the $L^2$-norm error, as well as the error in  the norm on the left hand-side of \eqref{finall_error_bound} for R-FEM
approximation $u_h$, against the square root of the number of degrees
of freedom in the underlying finite element space $V_h^r$. Abusing the notation, we use again $\nsdg{\cdot}$ to denote the norm on the left hand-side of \eqref{finall_error_bound}. We observe that $\nsdg{u-u_h} $
converges to zero at the optimal rates
$\mathcal{O}(h^{r})$, as the mesh size $h$ tends to zero
for each fixed $r$. These results agree with the
result \eqref{finall_error_bound} in Theorem \ref{Theorem:error}. However, the convergence rate for $\|u-u_h\|$ seems to be slightly suboptimal in $h$. Additionally, we also plot the error in terms of $L^2$-norm and $H^1$-seminorm for $\mathcal{E}(u_h)$, against the square root of the number of degrees
of freedom in the underlying finite element space $V_h^r$. Here, again, observe that $\|u-\mathcal{E} (u_h) \|$ and $|u-\mathcal{E} (u_h) |_{H^1{\Omega}}$
converge to zero at a slightly suboptimal rate.

\begin{figure}[!ht]
\centering
\includegraphics[scale=0.4]{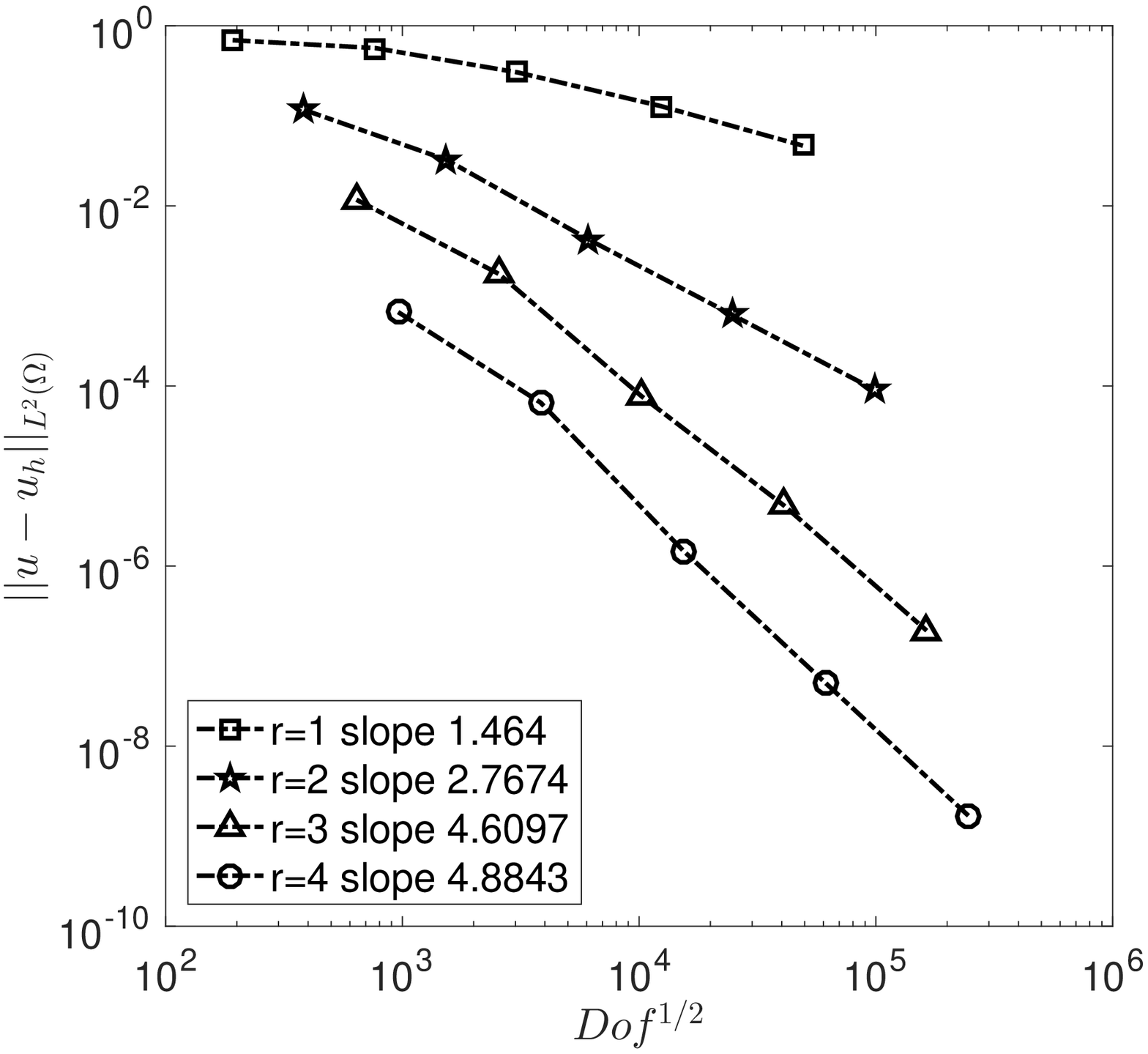}
\hfill
\includegraphics[scale=0.4]{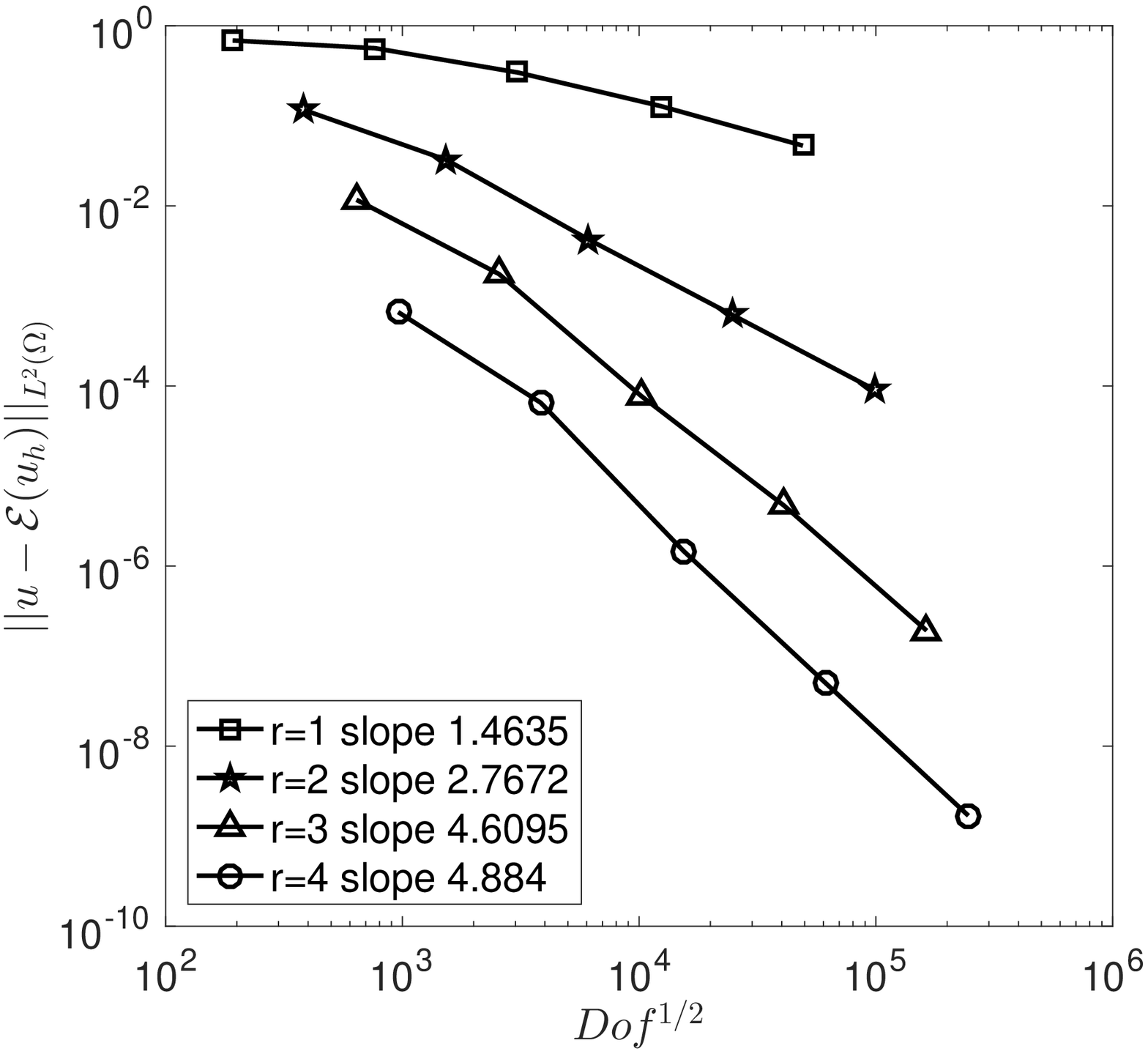}
\includegraphics[scale=0.4]{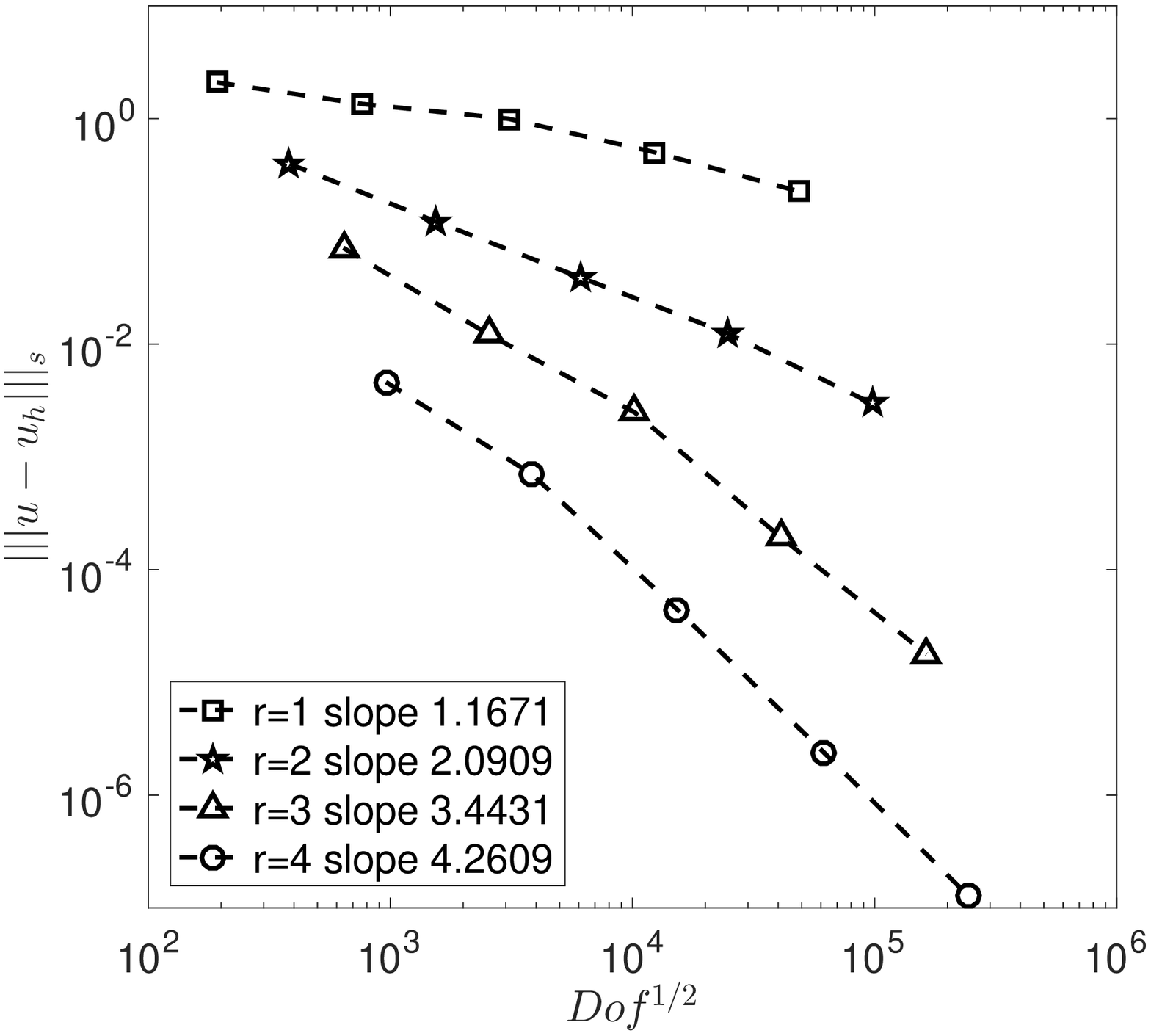}
\hfill
\includegraphics[scale=0.4]{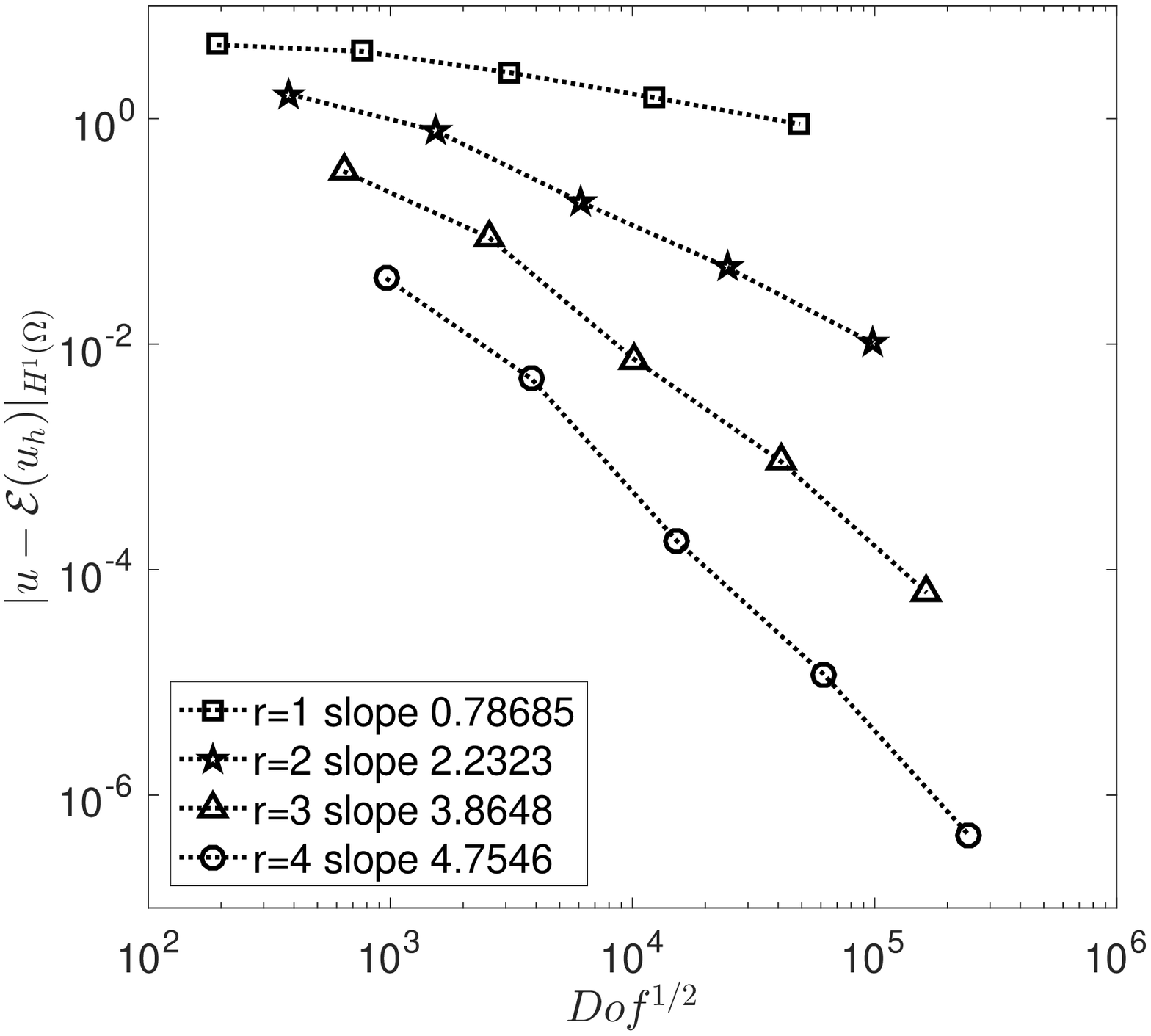}
\caption{
\label{ex4:h-refine}
Example 4. Convergence of the R-FEM under $h$--refinement for $r=1,2,3,4$. } 
\end{figure}

\section*{Acknowledgements} We wish to express our sincere gratitude Andrea Cangiani (University of Leicester) for his insightful comments on an earlier version of this work. ZD and EHG acknowledge funding by The Leverhulme Trust (grant no. RPG-2015-306) and TP acknowledges funding by the EPSRC grant EP/P000835/1.

\bibliographystyle{siam}
\bibliography{RFEMpoly}

\end{document}